\numberwithin{equation}{section}
\numberwithin{figure}{section}
\theoremstyle{plain}
\newtheorem*{thm*}{\protect\theoremname}
\theoremstyle{plain}
\newtheorem{thm}{\protect\theoremname}
\theoremstyle{definition}
\newtheorem{defn}[thm]{\protect\definitionname}
\theoremstyle{plain}
\newtheorem{prop}[thm]{\protect\propositionname}
\theoremstyle{plain}
\newtheorem{lem}[thm]{\protect\lemmaname}
\theoremstyle{plain}
\newtheorem{cor}[thm]{\protect\corollaryname}
\theoremstyle{definition}
\newtheorem{example}[thm]{\protect\examplename}
\theoremstyle{remark}
\newtheorem{rem}[thm]{\protect\remarkname}
\providecommand{\corollaryname}{Corollary}
\providecommand{\definitionname}{Definition}
\providecommand{\examplename}{Example}
\providecommand{\lemmaname}{Lemma}
\providecommand{\propositionname}{Proposition}
\providecommand{\remarkname}{Remark}
\providecommand{\theoremname}{Theorem}
\begin{document}

\title{Characteristic classes for TC structures}

\author{Mauricio Cepeda Davila}
\begin{abstract}
In this article we study the construction of characteristic classes
for principal $G$-bundles equipped with an additional structure called
transitionally commutative structure (TC structure). These structures
classify, up to homotopy, possible trivializations of a principal
$G$-bundle, such that the induced cocycle have functions that commute
in the intersections of their domains. We focus mainly on the cases
where the structural group G equals SU(n), U(n) or Sp(n). Our approach
is an algebraic-geometric construction that relies on the so called
power maps defined on the space $B_{\mathrm{com}}G$, the classifying
space for commutativity in the group G.
\end{abstract}

\thanks{The author was sponsored by the Colombian Ministry of Sciences (previously
Colciencias) under the public sponsorship act 647 of 2014 for national
doctoral programs. Where the results presented here are part of his
PhD thesis, supervised by José Manuel Gómez Guerra at the National
University of Colombia at Medellín. }
\maketitle

\section{Introduction}

Suppose that $G$ is a Lie group and consider the set of $n$-tuples
with commuting elements which can be identified with $\mathrm{Hom}\left(\mathbb{Z}^{n},G\right)$.
Adem, Cohen and Torres-Giese (see \cite{key-17}) showed that $\left\{ \mathrm{Hom}\left(\mathbb{Z}^{n},G\right)\right\} _{n\geq0}$
can be endowed with a simplicial structure whose geometric realization
is denoted by $B_{\mathrm{com}}G$. Additionally, consider a principle
$G$-bundle over a compact Hausdorff space $M$, with a classifying
function $g:M\rightarrow BG$ and trivializations with associated
cocycle $\left\{ \rho_{ij}\right\} $. Suppose then that the cocycles
commute with each other in the intersection of their domains, i.e.
$\rho_{ij}\ensuremath{\cdot}\rho_{jk}=\rho_{jk}\rho_{ij}.$ Adem and
Gomez showed in \cite{key-1} that the previous commutativity condition
holds if only if there is a lifting, up tho homotopy of the classifying
map $g$; that is a commutative diagram up to homotopy as the one
shown below, 
\[
\xymatrix{ & B_{\mathrm{com}}G\ar[d]^{\iota}\\
M\ar[ru]^{f}\ar[r]^{g} & BG.
}
\]

The existence of such lifting is what we call a \textbf{transitionally
commutative (TC) structure} on a principal $G$-bundle. Where we say
that two TC structures $f_{1},f_{2}:M\rightarrow B_{\mathrm{com}}G$
are equivalent if the functions are homotopic. TC structures are meant
to classify the different ways a principle bundle can have commutative
cocycles, up to homotopy. 

The interest in studying the spaces $\mathrm{Hom}\left(\mathbb{Z}^{n},G\right)$
arises from the study of moduli spaces of flat bundles, which are
important for Quantum field theories such as the Yang-Mills and Chern-
Simons theories. In particular, when the base space is the torus $\left(S^{1}\right)^{n}$
and the structural group is a compact Lie group $G$, the moduli spaces
of flat bundles can be identified with $\mathrm{Hom}\left(\mathbb{Z}^{n},G\right)/G$,
where $G$ acts under conjugation. 

Mathematically speaking, the theory of commuting tuples is interesting
in its own right. For example, Adem and Gomez defined in \cite{key-1}
the commutative K-theory of a finite CW-complex X to be $K_{\mathrm{com}}X:=Gr\left(\mathrm{Vect}_{\mathrm{com}}\left(X\right)\right)$,
where $Gr$ denotes the Grothendieck construction and $\mathrm{Vect}_{\mathrm{com}}\left(X\right)$
is the set of equivalence classes of vector bundles over $X$ with
commuting cocycles. Later on Adem, Gómez, Lind and Tillman introduce
the notion of $q$-nilpotent $K$-theory of a CW-complex $X$ for
any $q\ge2$, which extends the notion of commutative $K$-theory
defined by Adem and Gomez, and show that it is represented by $\mathbb{Z}\times B(q,U)$,
were $B(q,U)$ is the $q$-th term of a filtration of the infinite
loop space $BU$. (See \cite{key-18}.)

In this article we define and develop characteristic classes for TC
structures and mainly, we develop an algebraic-geometric method to
use Chern-Weil theory to compute them. To start we will see that there
is a one to one correspondence between characteristic classes and
elements of $H^{\text{*}}\left(B_{com}G,\mathbb{R}\right)$. We then
use the description of $H^{\text{*}}\left(B_{com}G,\mathbb{R}\right)$
given in \cite{key-1} for which we exhibit a set of algebraic generators.
Then we show how we can use Chern-Weil theory to compute the characteristic
classes associated to those generators. We do this for $G$ equal
to either $U(n)$, $SU(n)$ or $\mathrm{Sp}(n)$ for the following
reasons. 

In general the spaces $\mathrm{Hom}\left(\mathbb{Z}^{n},G\right)$
are not path connected, so the simplicial construction can be reduced
to consider the path connected components containing the identity
tuple $\left(1,1,\ldots,1\right)$. These connected components are
denoted by $\mathrm{Hom}\left(\mathbb{Z}^{n},G\right)_{1}$, and the
geometric realization of them is denoted by $B_{\mathrm{com}}G_{1}$.
Adem and Gomez showed in Proposition 7.1 of \cite{key-1} that the
cohomology with real coefficients of $B_{\mathrm{com}}G_{1}$ is isomorphic
to 
\begin{equation}
\left(H^{*}\left(BT,\mathbb{R}\right)\otimes H^{*}\left(BT,\mathbb{R}\right)\right)^{W}/J,\label{eq:Cohomology of BcomG}
\end{equation}
where $T\subset G$ is a maximal tori, $W$ is the Weil group acting
diagonally, and $J$ is the ideal generated by elements of the form
$p\left(x\right)\otimes1$ with $p\left(x\right)$ an $W$-invariant
polynomial of positive degree. 

Additionally, Adem and Cohen showed in Corollary 2.4 of \cite{key-16}
that $\mathrm{Hom}\left(\mathbb{Z}^{n},G\right)$ is path connected
when $G$ is either $U(n)$, $SU(n)$ or $\mathrm{Sp}(n)$. For them
then Expression \ref{eq:Cohomology of BcomG} describes the cohomology
of all $B_{\mathrm{com}}G$. To obtain the generators of this cohomology,
we first consider the natural inclusion $\mathrm{Hom}\left(\mathbb{Z}^{n},G\right)\subseteq G^{n}$.
The inclusion induces a simplicial map between the simplicial structure
of $\left\{ \mathrm{Hom}\left(\mathbb{Z}^{n},G\right)\right\} _{n\geq0}$
and the bar construction for the classifying space of $G$, $BG$.
This in turn gives rise to a map 
\[
\iota:H^{\text{*}}\left(BG,\mathbb{R}\right)\rightarrow H^{\text{*}}\left(B_{com}G,\mathbb{R}\right).
\]
Secondly, we need to consider the assignments 
\begin{align*}
\mathrm{Hom}\left(\mathbb{Z}^{n},G\right) & \rightarrow\mathrm{Hom}\left(\mathbb{Z}^{n},G\right)\\
\left(g_{1},\ldots,g_{n}\right) & \mapsto\left(g_{1}^{k},\ldots,g_{n}^{k}\right).
\end{align*}
These assignments give rise to simplicial maps, allowing us to obtain
the power maps 
\[
\Phi^{k}:H^{*}\left(B_{\mathrm{com}}G,\mathbb{R}\right)\rightarrow H^{*}\left(B_{\mathrm{com}}G,\mathbb{R}\right)
\]
when $k\in\mathbb{Z}$. By using characterizations of the cohomology
rings as well as the effect of these maps on them, we then use the
particularities of the action of the Weil group for $U(n)$, $SU(n)$
and $\mathrm{Sp}(n)$ to show that

\medskip{}

\begin{thm*}
For $G$ equal to $U\left(n\right)$, $SU\left(n\right)$ or $\mathrm{Sp}\left(n\right)$
then $H^{*}\left(B_{\mathrm{com}}G\right)$ is generated as an algebra
by 
\[
\left\{ \Phi^{k}\left(\mathrm{Im}\iota\right)\mid k\in\mathbb{Z}\setminus\left\{ 0\right\} \right\} .
\]
\end{thm*}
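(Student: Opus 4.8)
The plan is to make both maps $\iota$ and $\Phi^{k}$ completely explicit on the model $\bigl(H^{*}(BT,\mathbb{R})\otimes H^{*}(BT,\mathbb{R})\bigr)^{W}/J$ of Expression \ref{eq:Cohomology of BcomG}, and then reduce the statement to a question in the invariant theory of $W$. Write $S=H^{*}(BT,\mathbb{R})$ and let $x_{1},\dots,x_{n}$ and $y_{1},\dots,y_{n}$ be the degree-two generators of the first and second tensor factors, so that $W$ permutes the pairs $(x_{i},y_{i})$ diagonally (and, for $\mathrm{Sp}(n)$, additionally acts by the simultaneous sign changes $(x_{i},y_{i})\mapsto(-x_{i},-y_{i})$). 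Since $J$ is generated by the $W$-invariant polynomials in the $x_{i}$ of positive degree, the first factor contributes the coinvariant algebra $H^{*}(G/T,\mathbb{R})=S/(S^{W}_{+})$, i.e. the conjugation direction $G/T$ of a commuting tuple $g\vec t g^{-1}$, while the second factor is the honest $H^{*}(BT,\mathbb{R})$ coming from the torus (commuting-element) direction $T^{\bullet}\rightsquigarrow BT$. As $\iota$ and $\Phi^{k}$ are algebra homomorphisms, it suffices to treat the algebra generators $c_{1},\dots,c_{n}$ of $H^{*}(BG,\mathbb{R})$ (the Chern classes for $U(n)$, $SU(n)$; the symplectic Pontryagin classes for $\mathrm{Sp}(n)$).

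I would first establish two structural lemmas. First, because the power map $(g_{1},\dots,g_{n})\mapsto(g_{1}^{k},\dots,g_{n}^{k})$ fixes the conjugating element and acts by $\vec t\mapsto\vec t^{\,k}$ on the torus, it is realized by the $k$-th power map $BT\to BT$ on the second factor and by the identity on the first; hence $\Phi^{k}$ is the algebra map determined by $x_{i}\mapsto x_{i}$ and $y_{i}\mapsto k\,y_{i}$. Second, the principal bundle classified by $\iota$ has, for the standard representation, twisted Chern roots $x_{i}+y_{i}$, since the weight-$\lambda$ line bundle on $G/T\times BT$ has first Chern class $x_{\lambda}+y_{\lambda}$; consequently $\iota$ sends the $j$-th Chern class to the class of $e_{j}(x_{1}+y_{1},\dots,x_{n}+y_{n})$, and for $\mathrm{Sp}(n)$ the $j$-th symplectic Pontryagin class to the class of $e_{j}\bigl((x_{1}+y_{1})^{2},\dots,(x_{n}+y_{n})^{2}\bigr)$.

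Granting this, the computation reads, for $U(n)$ and $SU(n)$,
\[
\Phi^{k}\bigl(\iota(c_{j})\bigr)=\bigl[e_{j}(x_{1}+ky_{1},\dots,x_{n}+ky_{n})\bigr]=\sum_{b=0}^{j}k^{b}\,[E_{j,b}],
\]
where $E_{j,b}$ is the bihomogeneous component of $e_{j}(x+ty)$ of degree $b$ in the $y$-variables, a polarized (multisymmetric) elementary symmetric polynomial. As $k$ ranges over the nonzero integers, a Vandermonde argument (e.g. using $k=1,\dots,j+1$) places every $[E_{j,b}]$ in the linear span of $\{\Phi^{k}(\iota(c_{j}))\}_{k\neq0}$; the excluded term $[E_{j,0}]=[e_{j}(x)]=0$ in the quotient by $J$, which is exactly why $k=0$ may be discarded. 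The same argument applies verbatim to $\mathrm{Sp}(n)$ via $e_{j}((x+ky)^{2})=\sum_{b=0}^{2j}k^{b}\,[F_{j,b}]$, with $[F_{j,0}]=[e_{j}(x^{2})]=0$.

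It then remains to show that the polarized classes $[E_{j,b}]$ (resp. $[F_{j,b}]$) generate $(S\otimes S)^{W}/J$ as an algebra, which I would deduce from the corresponding statement before quotienting: the full set of polarizations of $e_{1},\dots,e_{n}$ (resp. of the basic invariants $e_{j}(x^{2})$) generates the ring of diagonal invariants $(S\otimes S)^{W}$. Over $\mathbb{R}$ this is the classical fact that the ring of multisymmetric functions is generated by polarized power sums, equivalently, in characteristic zero, by polarized elementary symmetric functions, together with its signed analogue for the hyperoctahedral group; passing to the quotient by $J$ (generated precisely by the pure components $[E_{j,0}]=[e_{j}(x)]$ already discarded) finishes the argument, and for $SU(n)$ one imposes the single relation $e_{1}(x)=0=e_{1}(y)$ and argues identically. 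The two load-bearing steps are the structural identification of $\iota$ in (the second lemma) — tracking the map through the isomorphism of Expression \ref{eq:Cohomology of BcomG} to confirm that the twisted Chern roots are exactly $x_{i}+y_{i}$ — and the invariant-theoretic generation statement for the diagonal action, where the characteristic-zero hypothesis is essential and the symplectic (hyperoctahedral) case must be handled with care, since polarization fails to generate diagonal invariants for arbitrary finite groups; the Vandermonde extraction in between is routine.
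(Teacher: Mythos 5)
Your proposal is correct and follows essentially the same route as the paper: explicit identification of $\iota$ ($z_{i}\mapsto x_{i}+y_{i}$) and $\Phi^{k}$ ($x_{i}\mapsto x_{i}$, $y_{i}\mapsto ky_{i}$) on the model $\bigl(H^{*}(BT)\otimes H^{*}(BT)\bigr)^{W}/J$, extraction of the bihomogeneous polarized components by letting $k$ vary (your Vandermonde inversion is the closed form of the paper's recursion $A_{k}:=\Phi^{k+1}(A_{k-1})-(k+1)^{k}A_{k-1}$, applied there to power sums rather than elementary symmetric polynomials), and reduction to the fact that these polarized classes generate the diagonal invariants modulo $J$ (Vaccarino's theorem for $S_{n}$). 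The only substantive difference is one of emphasis in the hyperoctahedral case: where you cite the signed analogue of the polarization theorem as known, the paper proves it from scratch via the symmetrization operator $\mu$ and an induction showing every signed multisymmetric monomial symmetrization lies in the algebra generated by the $P_{a,b}$ with $a+b$ even.
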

In order to compute the TC characteristic class associated to a generator
of the form $\Phi^{k}\left(\iota\left(s\right)\right)\in H^{*}\left(B_{\mathrm{com}}G,\mathbb{R}\right)$,
$s\in H^{*}\left(BG,\mathbb{R}\right)$ and $k\in\mathbb{Z}\setminus\left\{ 0\right\} $,
we develop another construction. For a TC structure $f:M\rightarrow B_{\mathrm{com}}G$
over a principal $G$-bundle $E\rightarrow M$, we construct a family
of principal $G$-bundles $E^{k}\rightarrow M$, called the $k$-th
associated bundles. Then we prove that if $\Omega_{k}$ is the curvature
of $E^{k}$ we have the equality 
\[
f^{*}\left(\Phi^{k}\left(\iota\left(s\right)\right)\right)=s\left(\Omega_{k}\right)\in H^{*}\left(M,\mathbb{R}\right),
\]
where $s\left(\Omega_{k}\right)$ is the characteristic class of $E^{k}\rightarrow M$
associated to $s$, which is computed using Chern-Weil theory. This
let us obtain our main result:

\medskip{}

\begin{thm*}
(Chern-Weil theory for TC structures) Consider $\varepsilon\in\left[M,B_{\mathrm{com}}G\right]$
an equivalence class with an underlying smooth vector bundle $E\rightarrow M$,
and structure group $U\left(n\right)$, $SU\left(n\right)$ or $\mathrm{Sp}\left(n\right)$.
Also let $\Omega_{k}$ be the curvature of $E^{k}$, the $k$-th associated
bundle of $E$. Then every TC characteristic class can be obtained
as a linear combinations of products of the form 
\[
s_{1}\left(\Omega_{k_{1}}\right)\cdot s_{2}\left(\Omega_{k_{2}}\right)\cdots s_{m}\left(\Omega_{k_{m}}\right)\in H^{*}\left(M,\mathbb{R}\right),
\]
where $s_{i}\in H^{*}\left(BG\right)$ and $k_{i}\in\mathbb{Z}$.
Each $s_{i}\left(\Omega_{k_{1}}\right)$ is the characteristic class
of the vector bundle $E^{k}\rightarrow M$ computed using its curvature. 
\end{thm*}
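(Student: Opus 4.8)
The plan is to reduce the statement to the two results already in hand: the algebraic generation theorem for $H^{*}(B_{\mathrm{com}}G,\mathbb{R})$ and the curvature identity $f^{*}(\Phi^{k}(\iota(s))) = s(\Omega_{k})$. The only extra ingredient needed is that pullback in de Rham cohomology is a homomorphism of graded algebras. First I would fix a smooth representative $f : M \to B_{\mathrm{com}}G$ of the class $\varepsilon$, which exists because the underlying bundle $E \to M$ is smooth; this choice produces the smooth associated bundles $E^{k} \to M$, equips each with a connection, and thereby makes the Chern--Weil representatives $s(\Omega_{k})$ meaningful. By the correspondence recalled in the introduction, an arbitrary TC characteristic class is encoded by a single class $\alpha \in H^{*}(B_{\mathrm{com}}G,\mathbb{R})$, and its value on $\varepsilon$ is exactly $f^{*}(\alpha)$; so the theorem amounts to rewriting $f^{*}(\alpha)$ in the asserted form.

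Next I would invoke the generation theorem. Since $G$ is $U(n)$, $SU(n)$ or $\mathrm{Sp}(n)$, the classes $\Phi^{k}(\iota(s))$ with $k \in \mathbb{Z}\setminus\{0\}$ and $s \in H^{*}(BG)$ generate $H^{*}(B_{\mathrm{com}}G,\mathbb{R})$ as an algebra. Hence $\alpha$ is a finite $\mathbb{R}$-linear combination of products
\[
\Phi^{k_{1}}(\iota(s_{1})) \cdot \Phi^{k_{2}}(\iota(s_{2})) \cdots \Phi^{k_{m}}(\iota(s_{m})),
\]
with $s_{i} \in H^{*}(BG)$ and $k_{i} \in \mathbb{Z}\setminus\{0\}$. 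Applying $f^{*}$ and using that it respects sums and cup products, $f^{*}(\alpha)$ becomes the same linear combination of the products $\prod_{i} f^{*}(\Phi^{k_{i}}(\iota(s_{i})))$. Finally, applying the curvature identity factor by factor gives $f^{*}(\Phi^{k_{i}}(\iota(s_{i}))) = s_{i}(\Omega_{k_{i}})$, so $f^{*}(\alpha)$ is a linear combination of products $s_{1}(\Omega_{k_{1}}) \cdots s_{m}(\Omega_{k_{m}})$, which is the claim.

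I expect no serious obstacle here, since the mathematical substance has already been isolated into the generation theorem and the curvature identity; what remains is essentially bookkeeping organized around the ring structure. The one point I would treat with care is consistency of the geometric data: I would choose, once and for all, one bundle $E^{k}$ with a fixed connection for each value of $k$ that appears, expand $\alpha$ only afterwards, and then pull back, so that every occurrence of $s_{i}(\Omega_{k_{i}})$ inside a product refers to the same curvature and the cup products on $M$ are genuinely represented by wedge products of the corresponding Chern--Weil forms.
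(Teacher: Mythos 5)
Your proposal is correct and follows essentially the same route as the paper: invoke the generation theorem for $H^{*}\left(B_{\mathrm{com}}G,\mathbb{R}\right)$ by the classes $\Phi^{k}\left(\iota\left(s\right)\right)$, then apply the identity $f^{*}\left(\Phi^{k}\left(\iota\left(s\right)\right)\right)=s\left(\Omega_{k}\right)$ (the paper's preceding theorem, proved via the $k$-th associated bundle and the Chern--Weil isomorphism) to each factor. Your version merely makes explicit what the paper leaves implicit --- the correspondence between TC classes and elements of $H^{*}\left(B_{\mathrm{com}}G,\mathbb{R}\right)$, the ring-homomorphism property of $f^{*}$, and the fixed choice of connections on the bundles $E^{k}$ --- which is careful bookkeeping, not a different argument.
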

The outline of this article is as follows: in Section 2 we explain
the construction of $B_{\mathrm{com}}G$ and define TC structures,
TC characteristic classes, power maps and $k$-th associated bundles.
We also show how these concepts relate to each other. In Section 3
we show the effect of the power maps in the cohomology with real coefficients
of $B_{\mathrm{com}}G$. In Section 4 we obtain the generators for
$H^{*}\left(B_{\mathrm{com}}G,\mathbb{R}\right)$ for $G$ equal to
$U\left(n\right)$, $SU\left(n\right)$ or $\mathrm{Sp}\left(n\right)$.
In Section 5 we develop the Chern-Weil theory for TC characteristic
classes. Finally in Section 6 we show an example of a computation
of a TC characteristic class using a TC structure developed by Ramras
and Villareal (see \cite{key-15}).

\medskip{}

\section{TC structures and $B_{\mathrm{com}}G$}

In this section we introduce all the basic concepts we are using that
are related to commuting tuples in a Lie Group. 

\subsection{Simplicial construction for $B_{\mathrm{com}}G$: }

We first start with a basic description of the simplicial structure
used to define the space $B_{\mathrm{com}}G$. This will allow us
to obtain the generators for its cohomology with real coefficients. 

Let us define a simplicial space whose $n$-th level is given by $\mathrm{Hom}\left(\mathbb{Z}^{n},G\right)$,
which is the subspace of $G^{n}$ consisting of all commuting $n$-tuples.
This is $\left(g_{1},\ldots,g_{n}\right)$ such that $g_{i}g_{j}=g_{j}g_{i}$
for every $1\leq i,j\leq n$. Its face maps $\delta_{i}:\mathrm{Hom}\left(\mathbb{Z}^{n},G\right)\rightarrow\mathrm{Hom}\left(\mathbb{Z}^{n-1},G\right)$
are given by
\[
\delta_{i}\left(g_{1},\ldots,g_{n}\right):=\begin{cases}
\left(g_{2},\ldots,g_{n}\right) & i=0,\\
\left(g_{1},\ldots,,g_{i-1},g_{i}g_{i+1},g_{i+2},\ldots,g_{n}\right) & 1\leq i\leq n-1,\\
\left(g_{1},\ldots,g_{n-1}\right) & i=n,
\end{cases}
\]
and the degeneracy maps $s_{i}:\mathrm{Hom}\left(\mathbb{Z}^{n},G\right)\rightarrow\mathrm{Hom}\left(\mathbb{Z}^{n+1},G\right)$
are given by 
\[
s_{i}\left(g_{1},\ldots,g_{n}\right)=\left(g_{1},\ldots,,g_{i},1,g_{i+1},\ldots,g_{n}\right).
\]
It is routine to see that these maps satisfy the simplicial identities.

\medskip{}

\begin{defn}
The space $B_{\mathrm{com}}G$ is defined as the fat realization of
the simplicial space $\left\{ \mathrm{Hom}\left(\mathbb{Z}^{n},G\right)\right\} _{n\geq0}$,
that is 
\[
B_{com}G:=\left\Vert \mathrm{Hom}\left(\mathbb{Z}^{\bullet},G\right)\right\Vert .
\]
\end{defn}

It is also important to mention that for this construction the fat
realization is homotopy equivalent to the geometrical realization
as the simplicial space $\mathrm{Hom}\left(\mathbb{Z}^{\bullet},G\right)$
is proper. (See the appendix of \cite{key-1}.) 

In general, $\mathrm{Hom}\left(\mathbb{Z}^{m},G\right)$ is not connected.
The path connected component of $\mathrm{Hom}\left(\mathbb{Z}^{m},G\right)$
containing the element $\left(1,1,\ldots,1\right)$ is denoted by
$\mathrm{Hom}\left(\mathbb{Z}^{m},G\right)_{1}$. We can restrict
the face and degeneracy maps to obtain a simplicial space $\mathrm{Hom}\left(\mathbb{Z}^{\bullet},G\right)_{1}$
whose fat realization is denoted by $B_{\mathrm{com}}G_{1}$. However
Adem and Cohen showed in Corollary 2.4 of \cite{key-16} that $\mathrm{Hom}\left(\mathbb{Z}^{m},G\right)$
is path connected when $G$ is either $U\left(n\right)$, $SU\left(n\right)$
or $\mathrm{Sp}\left(n\right)$. So for these groups we have the equality
\[
B_{\mathrm{com}}G_{1}=B_{\mathrm{com}}G.
\]

\medskip{}

\subsection{Power Maps:}

For each $k\in\mathbb{Z}$ we define maps 
\begin{align*}
\Phi_{m}^{k}:\mathrm{Hom}\left(\mathbb{Z}^{m},G\right) & \rightarrow\mathrm{Hom}\left(\mathbb{Z}^{m},G\right)\\
\left(g_{1},\ldots,g_{m}\right) & \mapsto\left(g_{1}^{k},\ldots,g_{m}^{k}\right).
\end{align*}
These maps are well defined since the power of commuting elements
is still commutative. Commutativity is needed here in order for them
to induce simplicial maps. By this we mean maps commuting with the
face and degeneracy maps. More precisely we need the equality
\[
\left(g_{i}g_{i+1}\right)^{k}=g_{i}^{k}g_{i+1}^{k}
\]
to hold. Thus, only for commuting tuples we guarantee the existence
of the $k$-th power map $\Phi^{k}:B_{com}G\rightarrow B_{com}G$.
In the general bar construction for $G$, the power maps do not necessarily
induce simplicial maps. 

\medskip{}

\subsection{TC structures:}

Consider a principal $G$-bundle $\pi:E\rightarrow M$ over a compact
Hausdorff space $M$. This implies that $M$ has an open cover $\mathcal{U}:=\left\{ U_{i}\right\} _{i=1}^{m}$
and trivializations $\varphi_{i}:\pi^{-1}\left(U_{i}\right)\rightarrow U_{i}\times G.$
By considering the second component of the composition 
\[
\varphi_{j}\circ\varphi_{i}^{-1}:\left(U_{i}\cap U_{j}\right)\times G\rightarrow\left(U_{i}\cap U_{j}\right)\times G
\]
we obtain the cocycles $\rho_{ij}:U_{i}\cap U_{j}\rightarrow G$,
which satisfy that
\[
\varphi_{j}\circ\varphi_{i}^{-1}\left(x,g\right)=\left(x,\rho_{ij}\left(x\right)\cdot g\right),
\]
for every $x\in U_{i}\cap U_{j}$ and $g\in G$\footnote{It is worth recalling that up to equivalence the cocycles characterize
a principle bundle. }. Assume this cover is a good cover and consider the simplicial construction
of the nerve of the cover:
\[
\mathcal{N}\left(\mathcal{U}\right)_{n}=\bigsqcup\left(U_{i_{0}}\cap U_{i_{1}}\cdots\cap U_{i_{n}}\right).
\]
Take $\mathcal{N}\left(\mathcal{U}\right):=\left\Vert \mathcal{N}\left(\mathcal{U}\right)_{\bullet}\right\Vert $.
Since $\mathcal{U}$ is a good cover, $M$ and $\mathcal{N}\left(\mathcal{U}\right)$
are homotopy equivalent (See \cite{key-8}, Corollary 4G.3). This
guarantees a biyection 
\[
\left[\mathcal{N}\left(\mathcal{U}\right),Y\right]\cong\left[M,Y\right]
\]
for any space $Y$.

Recall that if we consider the bar construction of $BG$ we have in
every level the set of tuples, $G^{l}$. Then we have a simplicial
function $g_{n}:\mathcal{N}\left(\mathcal{U}\right)_{n}\rightarrow G^{n}$
given by
\[
g_{n}\left(x\right):=\left(\rho_{i_{0}i_{1}}\left(x\right),\rho_{i_{2}i_{3}}\left(x\right),\ldots,\rho_{i_{l-1}i_{l}}\left(x\right)\right).
\]
This induces a function $g:\mathcal{N}\left(\mathcal{U}\right)\rightarrow BG$,
which, up to homotopy, defines the classifying function $g:M\rightarrow BG$. 

Now suppose that for the principal $G$-bundle $\pi:E\rightarrow M$
there is a trivialization inducing cocycles that commute with each
other. That is that for $x\in U_{i}\cap U_{j}\cap U_{k}$ we have
\[
\rho_{ik}\left(x\right)\rho_{kj}\left(x\right)=\rho_{kj}\left(x\right)\rho_{ik}\left(x\right).
\]
Then we can define $f_{n}:\mathcal{N}\left(\mathcal{U}\right)_{n}\rightarrow\mathrm{Hom}\left(\mathbb{Z}^{n},G\right)$
given by
\[
f_{n}\left(x\right):=\left(\rho_{i_{0}i_{1}}\left(x\right),\rho_{i_{2}i_{3}}\left(x\right),\ldots,\rho_{i_{l-1}i_{l}}\left(x\right)\right).
\]
We have a commuting diagram 
\[
\xymatrix{\mathcal{N}\left(\mathcal{U}\right)_{n}\ar[r]^{f_{n}}\ar[rd]^{g_{n}} & \mathrm{Hom}\left(\mathbb{Z}^{n},G\right)\ar[d]\\
 & G^{n}
}
\]
where the vertical is the inclusion. This in turn leads to a diagram
commuting up to homotopy 
\[
\xymatrix{M\ar[r]^{f}\ar[rd]^{g} & B_{com}G\ar[d]\\
 & BG,
}
\]
where the vertical map is the inclusion. 

Adem and Gomez proved in Theorem 2.2 of \cite{key-1} that if there
is a lifting up to homotopy of the classifying function of a principal
$G$-bundle, then there exists a trivialization with commuting cocycles
for that bundle. That is, that the existence of a homotopy lifting
for the classifying function is a necessary and sufficient condition
for the existence of commuting cocycles for the principal $G$-bundle.
This allow us to define the following.

\medskip{}

\begin{defn}
Given a space $M$ and a principal $G$-bundle with classifying function
$f:M\rightarrow BG$, a \textbf{TC structure} over $M$ is a function
$g:M\rightarrow B_{\mathrm{com}}G$ such that 
\[
\xymatrix{M\ar[r]^{f}\ar[rd]^{g} & B_{com}G\ar[d]\\
 & BG,
}
\]
commutes up to homotopy. We say that two TC structures $f_{1},f_{2}:M\rightarrow B_{\mathrm{com}}G$
are equivalent if the functions are homotopic. 
\end{defn}

At this point is important to remark that given a principal bundle
there can be several different TC structures over it. That is, there
can exist functions $g:M\rightarrow BG$ and $f_{1},f_{2}:M\rightarrow B_{\mathrm{com}}G$
such that there are homotopies $\iota\circ f_{1}\cong g$ and $\iota\circ f_{2}\cong g$
but where $f_{1}$ is not homotopic to $f_{2}$. At the end of this
article we exhibit an example with a homotopy trivial $g:S^{4}\rightarrow BSU\left(2\right)$
with a non homotopy trivial lifting $G:S^{4}\rightarrow B_{\mathrm{com}}SU\left(2\right)$.

The assignment $\mathrm{Top}\rightarrow\left[-,B_{\mathrm{com}}G\right]$
is a contravariant functor: given a continuous function $h:M\rightarrow N$
we can consider its pullback 
\begin{align*}
h^{*}:\left[N,B_{\mathrm{com}}G\right] & \rightarrow\left[M,B_{\mathrm{com}}G\right]\\
\left[f\right] & \mapsto\left[f\circ h\right],
\end{align*}
where by $\left[f\right]$ we mean the homotopy class of the function
$f$. From this we define

\medskip{}

\begin{defn}
A characteristic class for TC structures or \textbf{TC characteristic
class} is a natural transformation $\eta:\left[-,B_{\mathrm{com}}G\right]\rightarrow H^{*}\left(-,\mathbb{R}\right)$.
Here $\left[-,B_{\mathrm{com}}G\right]$ is the functor assigning
to a space the set of homotopy classes of functions from the space
to $B_{\mathrm{com}}G$ and $H^{*}\left(-,\mathbb{R}\right)$ is the
functor of cohomology with real coefficients. 
\end{defn}

\medskip{}

\begin{prop}
There is a one to one correspondence between the TC characteristic
classes and elements of $H^{*}\left(B_{\mathrm{com}}G,\mathbb{R}\right)$.
\end{prop}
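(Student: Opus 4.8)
The plan is to recognize this statement as an instance of the Yoneda lemma, applied to the homotopy category of spaces, in which the contravariant functor $\left[-,B_{\mathrm{com}}G\right]$ is the functor represented by the object $B_{\mathrm{com}}G$. Concretely, I would exhibit two mutually inverse maps between the collection of TC characteristic classes and the set $H^*\left(B_{\mathrm{com}}G,\mathbb{R}\right)$.

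For the direction assigning a cohomology class to a natural transformation, given a TC characteristic class $\eta$ I would define the \emph{universal class} $u_\eta:=\eta_{B_{\mathrm{com}}G}\left(\left[\mathrm{id}\right]\right)\in H^*\left(B_{\mathrm{com}}G,\mathbb{R}\right)$, obtained by evaluating the component $\eta_{B_{\mathrm{com}}G}$ on the homotopy class of the identity map of $B_{\mathrm{com}}G$. Conversely, given a class $u\in H^*\left(B_{\mathrm{com}}G,\mathbb{R}\right)$, I would define a natural transformation $\eta^u$ whose component at a space $M$ sends a homotopy class $\left[f\right]\in\left[M,B_{\mathrm{com}}G\right]$ to the pullback $f^*\left(u\right)\in H^*\left(M,\mathbb{R}\right)$. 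Here the one point that genuinely requires an argument is that $\eta^u_M$ is well defined on homotopy classes: this is precisely the homotopy invariance of singular cohomology, which ensures that $f_0\simeq f_1$ forces $f_0^*=f_1^*$. Naturality of $\eta^u$ with respect to a continuous map $h:M\rightarrow N$ is then the functorial identity $\left(f\circ h\right)^*=h^*\circ f^*$ for cohomology, so $\eta^u$ is indeed a natural transformation $\left[-,B_{\mathrm{com}}G\right]\rightarrow H^*\left(-,\mathbb{R}\right)$.

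It remains to check that the two assignments $\eta\mapsto u_\eta$ and $u\mapsto\eta^u$ are mutually inverse, which is the Yoneda computation. In one direction, starting from $u$ we compute $u_{\eta^u}=\eta^u_{B_{\mathrm{com}}G}\left(\left[\mathrm{id}\right]\right)=\mathrm{id}^*\left(u\right)=u$. In the other direction, for an arbitrary $f:M\rightarrow B_{\mathrm{com}}G$ I would apply the naturality square of $\eta$ to the pullback map $f^*:\left[B_{\mathrm{com}}G,B_{\mathrm{com}}G\right]\rightarrow\left[M,B_{\mathrm{com}}G\right]$, giving $\eta_M\left(f^*\left[\mathrm{id}\right]\right)=f^*\left(\eta_{B_{\mathrm{com}}G}\left[\mathrm{id}\right]\right)$. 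Since $f^*\left[\mathrm{id}\right]=\left[\mathrm{id}\circ f\right]=\left[f\right]$, this reads $\eta_M\left(\left[f\right]\right)=f^*\left(u_\eta\right)=\eta^{u_\eta}_M\left(\left[f\right]\right)$, so $\eta=\eta^{u_\eta}$. Thus the two maps are inverse bijections.

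I do not expect a serious obstacle here, since the argument is essentially formal: the only external input is the homotopy invariance of cohomology, which legitimizes passing to homotopy classes in the source functor. The subtlety worth stating explicitly is that naturality must be invoked against the identity's homotopy class, so it is important that $\left[-,B_{\mathrm{com}}G\right]$ records homotopy classes of maps rather than maps themselves; this is exactly what makes $B_{\mathrm{com}}G$ the representing object and lets the universal class $u_\eta$ determine $\eta$ on all of its components.
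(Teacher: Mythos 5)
Your proposal is correct and is essentially the same argument as the paper's: both evaluate $\eta$ at the homotopy class of the identity to get the universal class, define the inverse assignment by $[f]\mapsto f^{*}(c)$, and verify the two maps are mutually inverse via the naturality square applied to $[\mathrm{Id}_{B_{\mathrm{com}}G}]$. The paper simply carries out this Yoneda-style computation by hand (phrasing the inverse check as injectivity plus surjectivity) without naming the Yoneda lemma, so the difference is only one of presentation.
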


\begin{proof}
Consider a TC characteristic class $\eta$, and define 
\[
c_{\eta}:=\eta\left(B_{\mathrm{com}}G\right)\left(\left[\mathrm{Id}_{B_{\mathrm{com}}G}\right]\right)\in H^{*}\left(B_{\mathrm{com}}G,\mathbb{R}\right)
\]
 where $\mathrm{Id}_{B_{\mathrm{com}}G}$ is the identity on $B_{\mathrm{com}}G$.
We want to see that the assignment $\theta:\eta\mapsto c_{\eta}$
is a one to one and onto.

First, consider a continuous function $f:M\rightarrow B_{\mathrm{com}}G$.
By naturallity of $\eta$ we have a commuting diagram 

\[
\xymatrix{\left[B_{\mathrm{com}}G,B_{\mathrm{com}}G\right]\ar[rr]^{\eta\left(B_{\mathrm{com}}G\right)}\ar[d]_{f^{**}} &  & H^{*}\left(B_{\mathrm{com}}G,\mathbb{R}\right)\ar[d]^{f^{*}}\\
\left[M,B_{\mathrm{com}}G\right]\ar[rr]^{\eta\left(M\right)} &  & H^{*}\left(M,\mathbb{R}\right)
}
\]
where we use $f^{**}$to distinguish the pullback of the functor $\left[-,B_{\mathrm{com}}G\right]$
from the pullback from cohomology. The commutativity of the previous
diagram means that 
\[
f^{*}\left(c_{\eta}\right)=\eta\left(M\right)\left(f^{**}\left(\left[\mathrm{Id}_{B_{\mathrm{com}}G}\right]\right)\right).
\]
 But since 
\[
f^{**}\left(\left[\mathrm{Id}_{B_{\mathrm{com}}G}\right]\right)=\left[\mathrm{Id}_{B_{\mathrm{com}}G}\circ f\right]=\left[f\right],
\]
we can conclude that 
\begin{equation}
f^{*}\left(c_{\eta}\right)=\eta\left(M\right)\left(\left[f\right]\right)\in H^{*}\left(M,\mathbb{R}\right).\label{eq:TC classes and classes of cohom BcomG}
\end{equation}

Now, to see that $\theta:\eta\mapsto c_{\eta}$ is surjective, take
$c\in H^{*}\left(B_{\mathrm{com}}G.\mathbb{R}\right)$ and define
\begin{align*}
\eta_{c}\left(M\right):\left[M,B_{\mathrm{com}}G\right] & \rightarrow H^{*}\left(M,\mathbb{R}\right)\\
\left[f\right] & \mapsto f^{*}\left(c\right).
\end{align*}
This can be seen to be a well defined natural transformation thanks
to the properties of cohomology, so $\eta_{c}$ is a TC characteristic
class. Now, by definition and Equation \ref{eq:TC classes and classes of cohom BcomG}
it follows that 
\[
c_{\eta_{c}}=\eta_{c}\left(B_{\mathrm{com}}G\right)\left(\left[\mathrm{Id}_{B_{\mathrm{com}}G}\right]\right)=\mathrm{Id}_{B_{\mathrm{com}}G}^{*}\left(c\right)=c,
\]
which implies that $\theta:\eta\mapsto c_{\eta}$ sends $\eta_{c}$
into $c$. 

On the other hand to prove inyectivity, take $c_{\eta}$ and consider
$\eta_{c_{\eta}}$ as defined before. For any $f:M\rightarrow B_{\mathrm{com}}G$
it follows that
\[
\eta_{c_{\eta}}\left(M\right)\left(\left[f\right]\right)=f^{*}\left(c_{\eta}\right)=\eta\left(M\right)\left(\left[f\right]\right),
\]
where the first equality is true by definition, and the sencond thanks
to Equation \ref{eq:TC classes and classes of cohom BcomG}. The equality
$\eta_{c_{\eta}}\left(M\right)\left(\left[f\right]\right)=\eta\left(M\right)\left(\left[f\right]\right)$
means that $\eta_{c_{\eta}}=\eta$, so if $\omega$ is another TC
characteristic class such that $c_{\eta}=c_{\omega}$, then 
\[
n=\eta_{c_{\eta}}=\eta_{c_{\omega}}=\omega.
\]
That is, $\theta:\eta\mapsto c_{\eta}$ is inyective. 
\end{proof}
\medskip{}

\subsection{The $k$-th associated bundles: }

Once again let $\left\{ U_{\alpha}\right\} _{\alpha\in J}$ be an
open cover of a space $M$ such that there are trivializations with
a cocycle $\left\{ \rho_{ij}:U_{i}\cap U_{j}\rightarrow G\right\} $,
such that if $x\in U_{i}\cap U_{j}\cap U_{l}$ then 
\[
\rho_{il}\left(x\right)\rho_{lj}\left(x\right)=\rho_{lj}\left(x\right)\rho_{lk}\left(x\right).
\]
These transition functions satisfy the cocycle condition as well,
that is, 
\[
\rho_{ij}\left(x\right)=\rho_{il}\left(x\right)\rho_{lj}\left(x\right).
\]
In particular these two properties imply that for $k\in\mathbb{Z}$
we have 
\[
\rho_{ij}\left(x\right)^{k}=\left(\rho_{il}\left(x\right)\rho_{lj}\left(x\right)\right)^{k}=\rho_{il}\left(x\right)^{k}\rho_{lj}\left(x\right)^{k}.
\]
This tell us that the collection of functions $\rho_{ij}^{k}:U_{i}\cap U_{j}\rightarrow G$
defined as 
\[
\rho_{ij}^{k}\left(x\right):=\rho_{ij}\left(x\right)^{k}
\]
 also satisfy the cocycle condition. Then for each $k\in\mathbb{Z}$
we can construct a principal bundle $p_{k}:E^{k}\rightarrow M$ with
trivializations over $\left\{ U_{i}\right\} _{i\in I}$ with cocycle
$\left\{ \rho_{ij}^{k}\right\} $. We call it the \textbf{$k$-th
associated bundle} of $E$. Here $E^{k}$ is obtained as the quotient
space 
\[
\left(\bigsqcup_{i\in I}U_{i}\times G\right)\diagup\sim,
\]
where for $x\in U_{i}$ and $y\in U_{j}$, $\left(x,g\right)\sim\left(y,h\right)$
if only if $x=y$ and $\rho_{ij}^{k}\left(x\right)\cdot g=h$.

\medskip{}

\begin{prop}
\label{thm:(Classifying-functions-for assiciated bundles}(Classifying
functions for $k$- th associated bundles) If $f:M\rightarrow B_{com}G$
defines a TC structure over a principal $G$-bundle, and $f^{k}:M\rightarrow B_{com}G$
is the corresponding lifting over the $k$-th associated bundle, then
the following map diagram commutes
\begin{equation}
\xymatrix{M\ar[r]^{f}\ar[rd]_{f^{k}} & B_{com}G\ar[d]^{\Phi^{k}}\\
 & B_{com}G.
}
\label{eq:associated bundle class func.}
\end{equation}
Where $\Phi^{k}:B_{com}G\rightarrow B_{com}G$ are the power maps. 
\end{prop}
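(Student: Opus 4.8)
The plan is to establish the commutativity one level up, on the nerve of the cover, and then pass to fat realizations. Recall from the construction of TC structures that, for a good cover $\mathcal{U}$ trivializing $E$ with commuting cocycle $\left\{\rho_{ij}\right\}$, the lifting $f$ is obtained as the realization of the simplicial map whose $n$-th level $f_n:\mathcal{N}\left(\mathcal{U}\right)_n\to\mathrm{Hom}\left(\mathbb{Z}^n,G\right)$ sends a point $x$ in an $n$-fold intersection to the tuple of the corresponding transition functions $\rho_{\bullet}\left(x\right)$, followed by the homotopy equivalence $M\simeq\mathcal{N}\left(\mathcal{U}\right)$. By the preceding subsection, $E^k$ is trivialized over the \emph{same} cover $\mathcal{U}$, with cocycle $\left\{\rho_{ij}^k\right\}$, $\rho_{ij}^k\left(x\right):=\rho_{ij}\left(x\right)^k$, which was shown there to satisfy both the cocycle and the commutativity conditions. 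Hence its lifting $f^k$ is the realization of the simplicial map $f_n^k:\mathcal{N}\left(\mathcal{U}\right)_n\to\mathrm{Hom}\left(\mathbb{Z}^n,G\right)$ given by the same tuple with each $\rho$ replaced by $\rho^k$. Thus it suffices to verify the diagram after replacing $M$ by $\mathcal{N}\left(\mathcal{U}\right)$.

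The heart of the argument is the levelwise identity
\[
\Phi_n^k\circ f_n=f_n^k\qquad\text{for every }n.
\]
Indeed, applying the simplicial power map $\Phi_n^k\left(g_1,\ldots,g_n\right)=\left(g_1^k,\ldots,g_n^k\right)$ to $f_n\left(x\right)$ raises each entry of the tuple $\rho_{\bullet}\left(x\right)$ to the $k$-th power, and by the very definition $\rho_{ij}^k\left(x\right)=\rho_{ij}\left(x\right)^k$ this is precisely $f_n^k\left(x\right)$. Both $\left\{f_n\right\}_n$ and $\left\{\Phi_n^k\right\}_n$ are simplicial maps---the latter being exactly where the commutativity of the cocycle is needed, as noted in the subsection on power maps---so their composite $\left\{\Phi_n^k\circ f_n\right\}_n$ is a simplicial map agreeing levelwise with the simplicial map $\left\{f_n^k\right\}_n$.

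Finally, since fat realization is a functor, the levelwise equality yields a strict equality $\Phi^k\circ\lVert f_\bullet\rVert=\lVert f_\bullet^k\rVert$ of maps $\mathcal{N}\left(\mathcal{U}\right)\to B_{\mathrm{com}}G$. Precomposing with the homotopy equivalence $M\to\mathcal{N}\left(\mathcal{U}\right)$ and recalling that $f$ and $f^k$ are, by construction, the induced maps out of $M$ (well defined up to homotopy), we conclude $\Phi^k\circ f\simeq f^k$, which is the asserted commutativity of \eqref{eq:associated bundle class func.}.

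I do not expect a genuine obstacle here; the argument is essentially bookkeeping. The two points that require care are confirming that the lifting attached to $E^k$ really is built from the cocycle $\left\{\rho_{ij}^k\right\}$ over the same indexing cover---so that $f_n^k$ has the stated form---and that $\left\{\Phi_n^k\right\}_n$ is genuinely simplicial, both of which are supplied above. The only subtlety worth flagging is that $f$ and $f^k$ are defined only up to homotopy on $M$, through the identification $M\simeq\mathcal{N}\left(\mathcal{U}\right)$, so the triangle commutes up to homotopy rather than strictly; on the nerve itself the equality holds on the nose.
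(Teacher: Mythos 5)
Your proposal is correct and follows essentially the same argument as the paper: identify the lifting for $E^{k}$ with the simplicial map built from the cocycle $\left\{ \rho_{ij}^{k}\right\}$, observe the levelwise identity $f_{n}^{k}=\Phi_{n}^{k}\circ f_{n}$, and pass to the realization. Your added remarks---that the commutativity of the cocycle is what makes $\Phi_{n}^{k}$ simplicial, and that the triangle over $M$ commutes only up to homotopy via the identification $M\simeq\mathcal{N}\left(\mathcal{U}\right)$---are careful elaborations of points the paper leaves implicit, not a different route.
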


\medskip{}

\begin{proof}
As it was explained before, to obtain the classifying functions for
the $k$-th associated bundle $p\left(k\right):E^{k}\rightarrow M$
we need to consider a simplicial map $f_{l}^{k}:\mathcal{N}\left(\mathcal{U}\right)_{l}\rightarrow\mathrm{Hom}\left(\mathbb{Z}^{l},G\right)$.
The components of this function are given by the transition functions:
if $x\in U_{i_{1}}\cap U_{i_{2}}\cap\cdots\cap U_{i_{l+1}}$, we take
\[
f_{l}^{k}\left(x\right)=\left(\rho_{i_{0}i_{1}}^{k}\left(x\right),\ldots,\rho_{i_{l-1}i_{l}}^{k}\left(x\right)\right)=\left(\rho_{i_{0}i_{1}}\left(x\right)^{k},\ldots,\rho_{i_{l-1}i_{l}}\left(x\right)^{k}\right).
\]
This can be rewritten using the power functions as
\[
f_{l}^{k}=\Phi_{l}^{k}\circ f_{l}.
\]
The desired result is obtained after passing to the geometric realization. 

\medskip{}
\end{proof}

\section{Power maps and cohomology of $B_{com}G$}

In this section we go through the reasoning behind the computation
of 
\[
H^{*}\left(B_{com}G_{1},\mathbb{R}\right)
\]
 made in \cite{key-1} to obtain the effect of the power maps on cohomology.
Thus we track such effect in the main steps of the computation. To
make notation simpler, we assume $B_{com}G_{1}=B_{com}G$, which is
true when $G$ is either $U\left(n\right)$, $SU\left(n\right)$ or
$\mathrm{Sp}\left(n\right)$, as mentioned before. We also fix a maximal
torus $T\subseteq G$ with Weyl group $W$ and we write $H^{*}\left(Y\right)$
to refer to the cohomology of $Y$ with real coefficients.

In Section 7 of \cite{key-1} it is proved that for a maximal torus
$T$ of $G$ with Weyl group $W$ we have 
\[
H^{*}\left(B_{com}G\right)\cong\left(H^{*}\left(BT\right)\otimes H^{*}\left(BT\right)\right)^{W}/J,
\]
where $J$ is the ideal generated by the see
\[
\left\{ f\left(x\right)\otimes1\in H^{*}\left(BT\right)\otimes H^{*}\left(BT\right)\mid f\mathrm{\:of\:positive\:degree}\right.
\]
\[
\mathrm{\left.\mathrm{polynomial}\:\mathrm{and}\:n\cdot f\left(x\right)=f\left(x\right)\:\mathrm{for}\:\mathrm{all}\:n\in W\right\} }
\]
and $W$ acts on $H^{*}\left(BT\right)\otimes H^{*}\left(BT\right)$
diagonally. 

In order to reach the description of the induced power maps $\Phi^{k}$
on cohomology, we need to consider some auxiliary maps that are used
in \cite{key-1}. In this process we will see what is their relationship
with the power maps. First, since all the tuples of $T^{m}$ have
commuting elements, we can consider the power maps for the torus $\psi^{k}:H^{*}\left(BT\right)\rightarrow H^{*}\left(BT\right)$.
This is the map induced in the $m$-level the by the power maps, 
\begin{align*}
\Phi_{m}^{k}:\mathrm{Hom}\left(\mathbb{Z}^{m},T\right) & \rightarrow\mathrm{Hom}\left(\mathbb{Z}^{m},T\right)\\
\left(g_{1},\ldots,g_{m}\right) & \mapsto\left(g_{1}^{k},\ldots,g_{m}^{k}\right).
\end{align*}
Also consider
\begin{align*}
\varphi_{m}:G\times T^{m} & \rightarrow\mathrm{Hom}\left(\mathbb{Z}^{m},G\right)\\
\left(g,t_{1},\ldots,t_{n}\right) & \mapsto\left(gt_{1}g^{-1},\ldots,gt_{m}g^{-1}\right).
\end{align*}
Because $\mathrm{Hom}\left(\mathbb{Z}^{m},G\right)$ is path connected,
an $m$-tuple $\left(g_{1},\ldots,g_{m}\right)$ has commuting elements
if and only if there is a maximal tori containing all $g_{i}$ (see
Lemma 4.2 of \cite{key-9}). Then, since every maximal tori is conjugated
to $T$, the previous map is surjective. We also have an action of
the normalizer of $T$ in $G$, $N_{G}\left(T\right)$, on $G\times T^{m}$,
where for $\eta\in N_{G}\left(T\right)$ we have
\[
\eta\cdot\left(g,t_{1},\ldots,t_{m}\right)=\left(g\eta^{-1},\eta t_{1}\eta^{-1},\ldots,\eta t_{m}\eta^{-1}\right).
\]
On the other hand, consider the flag variety $G/T$. It is easy to
verify that the maps $\varphi_{m}$ factor through the product $G/T\times T^{m}$
giving us a commutative diagram
\[
\xymatrix{G\times T^{m}\ar[r]^{\varphi_{m}}\ar[d] & \mathrm{Hom}\left(\mathbb{Z}^{m},G\right)\\
G/T\times T^{m}\ar[ru]
}
,
\]
such that the diagonal map is also surjective. We call it $\varphi_{m}$
as well. This family of maps give rise to a simplicial map 
\[
\varphi_{\bullet}:G/T_{\bullet}\times T^{\bullet}\rightarrow\mathrm{Hom}\left(\mathbb{Z}^{\bullet},G\right).
\]
Here $G/T_{\bullet}$ is the trivial simplicial space with $G/T$
on every level, and $T^{\bullet}$ is the simplicial space obtained
by the bar construction for the classifying space applied to $T$.

Furthermore using representatives of the Weyl group $\left[\eta\right]\in W=N_{G}\left(T\right)/T$,
we have a well defined action on $G/T\times T^{m}$ given by
\[
\left[\eta\right]\cdot\left(\left[g\right],t_{1},\ldots,t_{m}\right)=\left(\left[g\eta^{-1}\right],\eta t_{1}\eta^{-1},\ldots,\eta t_{m}\eta^{-1}\right).
\]
It is easy to see that this action makes $\varphi_{m}$ $W$-invariant.
Also we can construct a simplicial space, $G/T\times_{W}T^{\bullet}$,
having the space of orbits $G/T\times_{W}T^{m}$ on the $m$-th level.
Where the simplicial structure is inhered form $G/T_{\bullet}\times T^{\bullet}$,
giving us a simplicial map $\pi_{\bullet}:G/T_{\bullet}\times T^{\bullet}\rightarrow G/T\times_{W}T^{\bullet}$
where on each level we have the natural quotient map. Then we have
a commuting diagram
\[
\xymatrix{G/T_{\bullet}\times T^{\bullet}\ar[r]^{\varphi_{\bullet}}\ar[d]^{\pi_{\bullet}} & \mathrm{Hom}\left(\mathbb{Z}^{\bullet},G\right),\\
G/T\times_{W}T^{\bullet}\ar[ru]^{\bar{\varphi}_{\bullet}}
}
\]
where $\bar{\varphi}_{m}:G/T\times_{W}T^{m}\rightarrow\mathrm{Hom}\left(\mathbb{Z}^{m},G\right)$
is the induced map. Finally, we have maps 
\begin{align*}
P_{m}^{k}:G/T\times T^{m} & \rightarrow G/T\times T^{m}\\
\left(\left[g\right],t_{1},\ldots,t_{m}\right) & \mapsto\left(\left[g\right],t_{1}^{k},\ldots,t_{m}^{k}\right).
\end{align*}
By direct computation it can be seen that these maps are compatible
with the simplicial structure. They are also $W$-equivariant, that
is
\[
\left[\eta\right]\cdot P_{m}^{k}\left(g,t_{1},\ldots,t_{m}\right)=P_{m}^{k}\left(\left[\eta\right]\cdot\left(g,t_{1},\ldots,t_{m}\right)\right),
\]
This is true since, $\left(\eta t\eta^{-1}\right)^{k}=\eta t^{k}\eta^{-1}$
for $t\in T$.

From this point we are showing that the arguments given in \cite{key-1}
are natural. This allow us to include the power maps in their conclusions. 

\medskip{}

\begin{prop}
\label{prop:Isomorphism on H n'th level. }Suppose $G$ is a compact
connected Lie group such that 
\[
\mathrm{Hom}\left(\mathbb{Z}^{m},G\right)
\]
 is path connected for every non negative integer $m$. Then for the
cohomology with real coefficients we have a commutative diagram
\begin{equation}
\xymatrix{H^{*}\left(\mathrm{Hom}\left(\mathbb{Z}^{m},G\right)\right)\ar[r]^{\varphi_{m}^{*}}\ar[d]_{\left(\Phi_{m}^{k}\right)^{*}} & H^{*}\left(G/T\times T^{m}\right)^{W}\ar[d]^{\left(P_{m}^{k}\right)^{*}}\\
H^{*}\left(\mathrm{Hom}\left(\mathbb{Z}^{m},G\right)\right)\ar[r]^{\varphi_{m}^{*}} & H^{*}\left(G/T\times T^{m}\right)^{W}.
}
\label{eq: Isomorphism on H n-th level.}
\end{equation}
where the horizontal maps are isomorphisms. 
\end{prop}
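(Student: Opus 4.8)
The plan is to split the statement into its two independent assertions, namely that the horizontal maps $\varphi_{m}^{*}$ are isomorphisms onto the $W$-invariants, and that the square commutes, and to observe that only the second is genuinely new: the first is imported wholesale from the computation in \cite{key-1}, while the commutativity reduces to a single identity of maps of spaces.

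First I would record the decisive space-level identity
\[
\Phi_{m}^{k}\circ\varphi_{m}=\varphi_{m}\circ P_{m}^{k}.
\]
This is a one-line computation: for $\left(\left[g\right],t_{1},\ldots,t_{m}\right)\in G/T\times T^{m}$ both composites return $\left(gt_{1}^{k}g^{-1},\ldots,gt_{m}^{k}g^{-1}\right)$, using only the elementary fact $\left(gt_{i}g^{-1}\right)^{k}=gt_{i}^{k}g^{-1}$. Applying the contravariant functor $H^{*}\left(-,\mathbb{R}\right)$ reverses the order of composition and yields $\varphi_{m}^{*}\circ\left(\Phi_{m}^{k}\right)^{*}=\left(P_{m}^{k}\right)^{*}\circ\varphi_{m}^{*}$, which is exactly the commutativity required by Diagram \ref{eq: Isomorphism on H n-th level.}. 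Two compatibility checks are needed for the vertical arrows to live in the invariant subspaces: that $\varphi_{m}^{*}$ indeed lands in $H^{*}\left(G/T\times T^{m}\right)^{W}$, which holds because $\varphi_{m}$ is $W$-invariant, and that $\left(P_{m}^{k}\right)^{*}$ restricts to an endomorphism of $H^{*}\left(G/T\times T^{m}\right)^{W}$. The latter follows from the $W$-equivariance of $P_{m}^{k}$ established above: an equivariant map induces a map of $W$-modules on cohomology, hence preserves invariants.

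Next I would justify that the horizontal maps are isomorphisms. The map $\varphi_{m}$ factors as $\bar{\varphi}_{m}\circ\pi_{m}$, where $\pi_{m}\colon G/T\times T^{m}\to G/T\times_{W}T^{m}$ is the quotient by the finite group $W$ and $\bar{\varphi}_{m}\colon G/T\times_{W}T^{m}\to\mathrm{Hom}\left(\mathbb{Z}^{m},G\right)$ is the induced surjection. Since $W$ is finite and coefficients are real, the transfer argument shows that $\pi_{m}^{*}$ induces an isomorphism $H^{*}\left(G/T\times_{W}T^{m},\mathbb{R}\right)\cong H^{*}\left(G/T\times T^{m},\mathbb{R}\right)^{W}$. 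The remaining, and deep, input is that $\bar{\varphi}_{m}$ induces an isomorphism on real cohomology; this is precisely the content of Section 7 of \cite{key-1}, and it is here that the hypothesis of path connectedness of every $\mathrm{Hom}\left(\mathbb{Z}^{m},G\right)$ is essential, since by Lemma 4.2 of \cite{key-9} it guarantees that every commuting tuple lies in a maximal torus and hence that $\bar{\varphi}_{m}$ is surjective with the correct cohomological behaviour. Composing these two facts gives that $\varphi_{m}^{*}$ is an isomorphism onto $H^{*}\left(G/T\times T^{m}\right)^{W}$.

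The main obstacle is emphatically not the commutativity, which becomes formal once the space-level identity is available, but the claim that $\bar{\varphi}_{m}$ is a real-cohomology isomorphism. I would treat this as a black box quoted from \cite{key-1}, since the stated purpose of this proposition is only to verify that the auxiliary maps $\Phi_{m}^{k}$ and $P_{m}^{k}$ slot compatibly into that preexisting computation. The single point of bookkeeping worth stressing is that all three families of maps are simplicial and that $P_{m}^{k}$ is $W$-equivariant, so that the vertical arrows are honestly defined on the invariant subspaces and the whole square is a diagram internal to the category of $W$-modules.
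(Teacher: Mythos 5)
Your proposal is correct and follows essentially the same route as the paper: the space-level intertwining identity $\varphi_{m}\circ P_{m}^{k}=\Phi_{m}^{k}\circ\varphi_{m}$ made formal by functoriality of $H^{*}$, plus the factorization $\varphi_{m}^{*}=\pi^{*}\circ\bar{\varphi}_{m}^{*}$ with the deep isomorphism quoted as a black box (the paper cites Theorem 3.3 of \cite{key-9} rather than Section 7 of \cite{key-1}, but this is the same underlying result). The only cosmetic difference is that you deduce that $\left(P_{m}^{k}\right)^{*}$ preserves $W$-invariants directly from $W$-equivariance of $P_{m}^{k}$, whereas the paper routes this through the induced map $\bar{P}_{m}^{k}$ on the quotient $G/T\times_{W}T^{m}$ and the fact that the image of $\pi^{*}$ equals the invariant subalgebra; your version is slightly more economical and equally valid.
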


\begin{proof}
Under this setting, Theorem 3.3 of \cite{key-9} is applied to conclude
that we have the following natural isomorphisms
\begin{equation}
H^{*}\left(\mathrm{Hom}\left(\mathbb{Z}^{m},G\right)\right)\overset{\left(\bar{\varphi}_{m}\right)^{*}}{\cong}H^{*}\left(G/T\times_{W}T^{m}\right)\overset{\pi^{*}}{\cong}H^{*}\left(G/T\times T^{m}\right)^{W}.\label{eq:Ident. coho Bcom}
\end{equation}
 Now let us see how the power maps are related to this constructions
so far. We have maps 
\begin{align*}
P_{m}^{k}:G/T\times T^{m} & \rightarrow G/T\times T^{m}\\
\left(\left[g\right],t_{1},\ldots,t_{m}\right) & \mapsto\left(\left[g\right],t_{1}^{k},\ldots,t_{m}^{k}\right).
\end{align*}
By direct computation it can be seen that these maps are compatible
with the simplicial structure. They are also $W$-equivariant, that
is
\[
\left[\eta\right]\cdot P_{m}^{k}\left(g,t_{1},\ldots,t_{m}\right)=P_{m}^{k}\left(\left[\eta\right]\cdot\left(g,t_{1},\ldots,t_{m}\right)\right),
\]
This is true since, $\left(\eta t\eta^{-1}\right)^{k}=\eta t^{k}\eta^{-1}$
for $t\in T$. Thus, they induced a well define map $\bar{P}_{m}^{k}:G/T\times_{W}T^{m}\rightarrow G/T\times_{W}T^{m}$,
and we get the following commuting diagram
\[
\xymatrix{H^{*}\left(G/T\times_{W}T^{m}\right)\ar[r]^{\pi^{*}}\ar[d]^{\left(\bar{P}_{m}^{k}\right)^{*}} & H^{*}\left(G/T\times T^{m}\right)\ar[d]^{P^{k}}\\
H^{*}\left(G/T\times_{W}T^{m}\right)\ar[r]^{\pi^{*}} & H^{*}\left(G/T\times T^{m}\right).
}
\]
We know that the homomorphism 
\[
\pi^{*}:H^{*}\left(G/T\times_{W}T^{m}\right)\rightarrow H^{*}\left(G/T\times T^{m}\right)
\]
 actually has image equal to $H^{*}\left(G/T\times T^{m}\right)^{W}$,
since 
\[
H^{*}\left(G/T\times_{W}T^{m}\right)\overset{\pi^{*}}{\cong}H^{*}\left(G/T\times T^{m}\right)^{W}.
\]
 Thus, we actually have the diagram
\[
\xymatrix{H^{*}\left(G/T\times_{W}T^{m}\right)\ar[r]^{\pi^{*}}\ar[d]^{\left(\bar{P}_{m}^{k}\right)^{*}} & H^{*}\left(G/T\times T^{m}\right)^{W}\ar[d]^{P^{k}}\\
H^{*}\left(G/T\times_{W}T^{m}\right)\ar[r]^{\pi^{*}} & H^{*}\left(G/T\times T_{m}\right)^{W}.
}
\]
where the horizontal maps are isomorphism. This implies that $\left(P_{m}^{k}\right)^{*}$
preserves $W$-invariance: 
\[
\left(P_{m}^{k}\right)^{*}\left(H^{*}\left(G/T\times T^{m}\right)^{W}\right)\subseteq H^{*}\left(G/T\times T^{m}\right)^{W}.
\]

Also, by direct computation from the definitions and the fact that
$\left(gtg^{-1}\right)^{k}=gt^{k}g^{-1}$, it follows that $\varphi_{m}\circ P_{m}^{k}=\Phi_{m}^{k}\circ\varphi_{m}$
holds. And since $\left(P_{m}^{k}\right)^{*}$ preserves $W$-invariance,
we obtain the following commuting diagram
\[
\xymatrix{H^{*}\left(\mathrm{Hom}\left(\mathbb{Z}^{m},G\right)\right)\ar[r]^{\varphi_{m}^{*}}\ar[d]_{\left(\Phi_{m}^{k}\right)^{*}} & H^{*}\left(G/T\times T^{m}\right)^{W}\ar[d]^{\left(P_{m}^{k}\right)^{*}}\\
H^{*}\left(\mathrm{Hom}\left(\mathbb{Z}^{m},G\right)\right)\ar[r]^{\varphi_{m}^{*}} & H^{*}\left(G/T\times T^{m}\right)^{W}.
}
\]
Here the horizontal maps are isomorphism as they can be factored by
the isomorphisms 
\[
\bar{\varphi}_{m}^{*}:H^{*}\left(\mathrm{Hom}\left(\mathbb{Z}^{m},G\right)\right)\rightarrow H^{*}\left(G/T\times_{W}T^{m}\right)
\]
 and 
\[
\pi^{*}:H^{*}\left(G/T\times_{W}T^{m}\right)\rightarrow H^{*}\left(G/T\times T^{m}\right)^{W}.
\]
\end{proof}
Next, thanks to the naturality in Theorems 5.15 and 1.19 of \cite{key-10}
it can be concluded that 
\begin{prop}
\label{prop:Cohomology of simplicial spaces}Let $X_{\bullet}$ and
$Y_{\bullet}$ be two simplicial spaces with a simplicial map $f:X_{\bullet}\rightarrow Y_{\bullet}$.
Suppose also that there is a finite group $K$ with an action on every
level $X_{q}$ compatible with the simplicial structure, such that
there is an isomorphism $H^{p}\left(C^{*}\left(X_{q}\right)\right)^{K}\cong H^{p}\left(C^{*}\left(Y_{q}\right)\right)$
induce on every level by the maps of $f$. Then there is natural isomorphism
\[
\left\Vert f\right\Vert ^{*}:H^{*}\left(\left\Vert Y\right\Vert \right)\rightarrow H^{*}\left(\left\Vert X\right\Vert \right)^{K},
\]
where $\left\Vert X\right\Vert $ and $\left\Vert Y\right\Vert $
are the fat realizations. 
\end{prop}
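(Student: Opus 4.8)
The plan is to compare the two simplicial spaces through the first-quadrant spectral sequence associated to the skeletal filtration of a fat realization, which is precisely the machinery provided by \cite{key-10}. For any simplicial space $Z_{\bullet}$ this spectral sequence has
\[
E_1^{p,q}(Z)=H^q\!\left(Z_p\right),\qquad d_1=\sum_{i}(-1)^i\,\delta_i^*,
\]
and converges to $H^{p+q}\!\left(\Vert Z\Vert\right)$, where $p$ is the simplicial degree and $q$ the cohomological degree. The naturality statement in \cite{key-10} guarantees that the simplicial map $f\colon X_{\bullet}\to Y_{\bullet}$ induces a morphism of spectral sequences $f^*\colon E_r^{*,*}(Y)\to E_r^{*,*}(X)$ that is compatible with the filtrations and abutments, and which on the $E_1$ page is simply the collection of the levelwise maps $f^*\colon H^q(Y_p)\to H^q(X_p)$.

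First I would bring in the $K$-action. Since the action of $K$ on $X_{\bullet}$ is compatible with the face and degeneracy maps, it descends to an action on $\Vert X\Vert$ preserving the skeletal filtration, hence to an action on the whole spectral sequence $E_r^{*,*}(X)$ commuting with every differential. Because $K$ is finite and the coefficients are real, the invariants functor $(-)^K$ is exact: averaging over $K$ exhibits invariants as a direct summand of the identity functor. Exactness lets $(-)^K$ commute with passage to cohomology at each stage, so taking invariants levelwise yields a spectral sequence $\big(E_r^{*,*}(X)\big)^K$ with
\[
\big(E_1^{p,q}(X)\big)^K=H^q(X_p)^K,
\]
and, since in each total degree the filtration is finite, its abutment is genuinely $H^*\!\left(\Vert X\Vert\right)^K$.

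Next I would invoke the hypothesis at the level of $E_1$. By assumption the levelwise maps $f^*$ realize isomorphisms $H^q(Y_p)\xrightarrow{\ \cong\ }H^q(X_p)^K$. Viewing $f^*$ as a morphism from the spectral sequence of $Y$ to the $K$-invariant spectral sequence of $X$, it is therefore an isomorphism on the entire $E_1$ page. Both spectral sequences live in the first quadrant, so the comparison theorem applies: an isomorphism on $E_1$ forces an isomorphism on every later page and hence on the associated graded of the abutments; finiteness of the filtration in each total degree then upgrades this to an isomorphism of the filtered abutments,
\[
\Vert f\Vert^*\colon H^*\!\left(\Vert Y\Vert\right)\xrightarrow{\ \cong\ }H^*\!\left(\Vert X\Vert\right)^K.
\]
Naturality of this isomorphism is inherited from the naturality of each ingredient used to build it.

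The step I expect to be the main obstacle is the second one: confirming that $(-)^K$ interacts correctly with the spectral sequence and its abutment, i.e.\ that the $K$-invariant $E_1$ page really converges to $H^*\!\left(\Vert X\Vert\right)^K$ and not to something larger. This hinges on the exactness of $(-)^K$ over a field of characteristic zero for a finite group, so the genuine work is in checking that the $K$-action is compatible with the skeletal filtration and that all the spectral-sequence differentials are $K$-equivariant, both of which follow from the compatibility of the action with the simplicial structure.
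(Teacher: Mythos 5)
Your strategy is the one the paper itself relies on: the paper's entire ``proof'' of this proposition is the sentence citing naturality of Theorems 5.15 and 1.19 of \cite{key-10}, i.e.\ precisely the skeletal/double-complex spectral sequence $E_1^{p,q}=H^q(X_p)\Rightarrow H^{p+q}(\|X\|)$, its naturality in the simplicial map, and the first-quadrant comparison theorem that you spell out. Most of your elaboration is sound: the $K$-action does pass to the spectral sequence, exactness of $(-)^K$ over $\mathbb{R}$ makes $\left(E_r^{*,*}(X)\right)^K$ a spectral sequence with $\left(E_{r+1}\right)^K=H\left(\left(E_r\right)^K,d_r\right)$, its $E_\infty$ is the associated graded of $H^*(\|X\|)^K$ for the induced filtration, and the hypothesis is exactly an isomorphism of $E_1$-pages $E_1(Y)\to\left(E_1(X)\right)^K$, which propagates to every page and to $E_\infty$.

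The gap is in your very last step, ``finiteness of the filtration \dots upgrades this to an isomorphism of the filtered abutments.'' To run the comparison theorem against the pair $\bigl(E_r(Y),\left(E_r(X)\right)^K\bigr)$ you need a map of filtered abutments $H^*(\|Y\|)\to H^*(\|X\|)^K$ inducing your $E_\infty$-isomorphism, and nothing you have established produces one: the hypothesis controls $f^*$ only on the cohomology of each level, so for $k\in K$ the difference $k^*\circ\|f\|^*-\|f\|^*$ is only seen to vanish on associated graded pieces, i.e.\ to shift the filtration, not to be zero. In general a filtered map whose associated graded lands isomorphically in the graded invariants need not land in, let alone surject onto, the invariants: take $B=\mathbb{R}e_1\oplus\mathbb{R}e_2$ with $F^1B=\mathbb{R}e_2$, the involution $e_1\mapsto e_1$, $e_2\mapsto -e_2$, and the filtered map $\phi(a)=e_1+e_2$ from $A=\mathbb{R}a$ with $F^1A=0$; then $\mathrm{gr}(\phi)$ is an isomorphism onto $\mathrm{gr}(B)^K$ while $\mathrm{im}(\phi)\neq B^K=\mathbb{R}e_1$. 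So your pages yield an abstract isomorphism $H^*(\|Y\|)\cong H^*(\|X\|)^K$, not that $\|f\|^*$ realizes it --- which is what the proposition asserts and what the paper uses afterwards, since it tracks the power maps through this specific map. The repair is cheap: work at the cochain level and replace $f^\#$ by $e\circ f^\#$, where $e=\frac{1}{|K|}\sum_{k\in K}k^\#$ is the averaging projector onto the invariant subcomplex; this is a filtered chain map inducing the same $E_1$-map, so the comparison theorem applies verbatim and gives an isomorphism onto $H^*(\|X\|)^K$. Alternatively, note that in the paper's application the simplicial map is strictly $K$-invariant ($\varphi_m([\eta]\cdot x)=\varphi_m(x)$), whence $k^*\circ\|f\|^*=\|f\|^*$ on the nose, the image of $\|f\|^*$ lies in the invariants, and your argument closes as written. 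Also, the obstacle you flagged yourself (equivariance of the differentials and compatibility with the skeletal filtration) is indeed routine; the danger sits one step later, at the abutment.
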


\medskip{}

\begin{prop}
\label{prop:BcomG and the realizations}Suppose $G$ is a compact
connected Lie group such that 
\[
\mathrm{Hom}\left(\mathbb{Z}^{m},G\right)
\]
 is path connected for every non negative integer $m$. Then for the
cohomology with real coefficients we have a commutative diagram
\begin{equation}
\xymatrix{H^{*}\left(B_{\mathrm{com}}G\right)\ar[rrr]^{\pi^{*}}\ar[d]_{\Phi^{k}} &  &  & H^{*}\left(\left\Vert \left(G/T\right)_{\bullet}\times BT_{\bullet}\right\Vert \right)^{W}\ar[d]^{P^{k}}\\
H^{*}\left(B_{\mathrm{com}}G\right)\ar[rrr]^{\pi^{*}} &  &  & H^{*}\left(\left\Vert \left(G/T\right)_{\bullet}\times BT_{\bullet}\right\Vert \right)^{W},
}
\label{eq:Diagram for invaiant cohomoloty-1-1}
\end{equation}
where the horizontal maps are isomorphisms. Here we are abusing notation
by using the same names for the power map and its induce map on cohomology.
\end{prop}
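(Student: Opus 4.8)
The plan is to promote the level-wise statement of Proposition~\ref{prop:Isomorphism on H n'th level. } to the fat realizations by invoking Proposition~\ref{prop:Cohomology of simplicial spaces}, and then to read off the commutativity with the power maps from the naturality of the isomorphism produced there. Throughout, write $X_{\bullet}:=\left(G/T\right)_{\bullet}\times T^{\bullet}$ and $Y_{\bullet}:=\mathrm{Hom}\left(\mathbb{Z}^{\bullet},G\right)$, so that $\left\Vert Y_{\bullet}\right\Vert =B_{\mathrm{com}}G$ and $\left\Vert X_{\bullet}\right\Vert =\left\Vert \left(G/T\right)_{\bullet}\times BT_{\bullet}\right\Vert$, since $BT_{\bullet}$ is by definition the bar construction $T^{\bullet}$. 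The relating simplicial map is $\varphi_{\bullet}:X_{\bullet}\rightarrow Y_{\bullet}$, and the finite group to be used is the Weyl group $K:=W$, which acts on each level $X_{m}=G/T\times T^{m}$ compatibly with the face and degeneracy maps as described above.

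First I would verify the hypotheses of Proposition~\ref{prop:Cohomology of simplicial spaces}. The group $W$ is finite because $G$ is compact and connected, and its level-wise action on $X_{\bullet}$ is simplicial. The required level-wise isomorphism
\[
H^{p}\left(X_{m}\right)^{W}\cong H^{p}\left(Y_{m}\right),
\]
induced by $\varphi_{m}$, is exactly the horizontal isomorphism $\varphi_{m}^{*}$ of Proposition~\ref{prop:Isomorphism on H n'th level. }. Applying Proposition~\ref{prop:Cohomology of simplicial spaces} with $f=\varphi_{\bullet}$ and $K=W$ then produces a natural isomorphism
\[
\pi^{*}:=\left\Vert \varphi_{\bullet}\right\Vert ^{*}:H^{*}\left(B_{\mathrm{com}}G\right)\rightarrow H^{*}\left(\left\Vert \left(G/T\right)_{\bullet}\times BT_{\bullet}\right\Vert \right)^{W},
\]
which furnishes the horizontal arrows of the diagram.

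For the commutativity of the square, the key observation is that the power maps assemble into a self-map of the entire datum of Proposition~\ref{prop:Cohomology of simplicial spaces}. Indeed, $\Phi_{\bullet}^{k}:Y_{\bullet}\rightarrow Y_{\bullet}$ and $P_{\bullet}^{k}:X_{\bullet}\rightarrow X_{\bullet}$ are simplicial maps, $P_{\bullet}^{k}$ is $W$-equivariant, and they satisfy the intertwining relation $\varphi_{\bullet}\circ P_{\bullet}^{k}=\Phi_{\bullet}^{k}\circ\varphi_{\bullet}$ established in the proof of Proposition~\ref{prop:Isomorphism on H n'th level. }. Hence $\left(P_{\bullet}^{k},\Phi_{\bullet}^{k}\right)$ is a morphism from the datum $\left(X_{\bullet}\xrightarrow{\varphi_{\bullet}}Y_{\bullet},W\right)$ to itself, and passing to fat realizations gives a commuting square of spaces. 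Since the isomorphism of Proposition~\ref{prop:Cohomology of simplicial spaces} is natural in this datum, applying cohomology turns that square into the asserted identity
\[
\pi^{*}\circ\Phi^{k}=P^{k}\circ\pi^{*},
\]
whose left vertical arrow is the power map induced on $H^{*}\left(B_{\mathrm{com}}G\right)$ and whose right vertical arrow is the induced $\left(P^{k}\right)^{*}$. As $P_{\bullet}^{k}$ is $W$-equivariant, $\left(P^{k}\right)^{*}$ preserves the ring of $W$-invariants, so the right-hand column indeed stays inside $H^{*}\left(\left\Vert X_{\bullet}\right\Vert \right)^{W}$.

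The step I expect to require the most care is the naturality argument of the previous paragraph: one must read Proposition~\ref{prop:Cohomology of simplicial spaces} as producing its isomorphism functorially in the triple $\left(X_{\bullet},Y_{\bullet},f\right)$ together with its compatible finite-group action, so that the endomorphism $\left(P_{\bullet}^{k},\Phi_{\bullet}^{k}\right)$ is literally carried to the square above. All of the genuine content---the identity $\varphi_{\bullet}\circ P_{\bullet}^{k}=\Phi_{\bullet}^{k}\circ\varphi_{\bullet}$ and the $W$-equivariance of $P_{\bullet}^{k}$---has already been checked, so what is left is bookkeeping about the fat realization functor and the naturality of the identifications.
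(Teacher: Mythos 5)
Your proposal is correct and follows essentially the same route as the paper: the paper likewise feeds the level-wise isomorphisms of Proposition \ref{prop:Isomorphism on H n'th level. } into Proposition \ref{prop:Cohomology of simplicial spaces} to obtain the horizontal isomorphisms, and then deduces commutativity of the square from the level-wise diagram together with the naturality of that isomorphism. Your write-up merely makes explicit the bookkeeping (viewing $\left(P_{\bullet}^{k},\Phi_{\bullet}^{k}\right)$ as an endomorphism of the datum) that the paper leaves implicit.
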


\begin{proof}
Because of Proposition \ref{prop:Isomorphism on H n'th level. } the
conditions of Proposition \ref{prop:Cohomology of simplicial spaces}
can be applied to conclude that $\pi^{*}:H^{*}\left(B_{\mathrm{com}}G\right)\rightarrow H^{*}\left(\left\Vert \left(G/T\right)_{\bullet}\times BT_{\bullet}\right\Vert \right)^{W}$
is an isomorphism. Then Diagram \ref{eq: Isomorphism on H n-th level.}
implies that Diagram \ref{eq:Diagram for invaiant cohomoloty-1-1}
commutes. 
\end{proof}
\medskip{}

\begin{prop}
\label{thm: Cohomology and Power maps} Suppose $G$ is a compact
connected Lie group such that 
\[
\mathrm{Hom}\left(\mathbb{Z}^{m},G\right)
\]
 is path connected for every non negative integer $m$. Then for the
cohomology with real coefficients we have a commutative diagram
\[
\xymatrix{H^{*}\left(B_{com}G\right)\ar[r]\ar[d]^{\Phi^{k}} & \left(H^{*}\left(BT\right)\otimes H^{*}\left(BT\right)\right)^{W}/J\ar[d]^{\overline{\mathrm{Id}\otimes\psi^{k}}}\\
H^{*}\left(B_{com}G\right)\ar[r] & \left(H^{*}\left(BT\right)\otimes H^{*}\left(BT\right)\right)^{W}/J,
}
\]
where the horizontal maps are the same isomorphism given above, and
$\Phi^{k}$ are the power maps on cohomology. 
\end{prop}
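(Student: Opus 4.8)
The plan is to obtain the stated square simply by transporting the commutative diagram of Proposition~\ref{prop:BcomG and the realizations} across the explicit identification of $H^{*}\left(\left\Vert (G/T)_{\bullet}\times BT_{\bullet}\right\Vert \right)^{W}$ with $\left(H^{*}\left(BT\right)\otimes H^{*}\left(BT\right)\right)^{W}/J$ that underlies Expression~\ref{eq:Cohomology of BcomG}. So the first step is to recall that identification and read off its effect on our maps. Since $(G/T)_{\bullet}$ is the trivial simplicial space on $G/T$ and $BT_{\bullet}$ is the bar construction on $T$, the realization has the homotopy type of $G/T\times BT$, so by the K\"unneth theorem $H^{*}\left(\left\Vert (G/T)_{\bullet}\times BT_{\bullet}\right\Vert \right)\cong H^{*}\left(G/T\right)\otimes H^{*}\left(BT\right)$. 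Borel's description of the flag variety, $H^{*}\left(G/T\right)\cong H^{*}\left(BT\right)/J_{0}$ with $J_{0}$ the ideal generated by the positive-degree $W$-invariant polynomials, then turns this into $\left(H^{*}\left(BT\right)\otimes H^{*}\left(BT\right)\right)/J$; taking $W$-invariants, which is exact over $\mathbb{R}$ because $W$ is finite and one may average, yields exactly the right-hand side of the target diagram. This is the computation of Proposition~7.1 of \cite{key-1}, and the horizontal isomorphisms in our statement are the composite of $\pi^{*}$ from Proposition~\ref{prop:BcomG and the realizations} with this chain of identifications.

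The second and essential step is to compute the map induced by $P^{k}$ under this identification. On realizations $P^{k}$ is $\mathrm{Id}_{G/T}\times Bk$, where $Bk:BT\to BT$ is induced by the group homomorphism $t\mapsto t^{k}$ of the torus (this is where commutativity of $T$ is used). Hence on cohomology $\left(P^{k}\right)^{*}=\mathrm{Id}_{H^{*}(G/T)}\otimes\left(Bk\right)^{*}$, and by definition $\left(Bk\right)^{*}$ is precisely the torus power map $\psi^{k}$ on $H^{*}\left(BT\right)$. Under $H^{*}\left(G/T\right)\cong H^{*}\left(BT\right)/J_{0}$ the identity of the first factor is induced by the identity of $H^{*}\left(BT\right)$, so the whole map is induced by $\mathrm{Id}\otimes\psi^{k}$ on $H^{*}\left(BT\right)\otimes H^{*}\left(BT\right)$; note that $\psi^{k}$ acts on the second ($BT$, i.e.\ $T^{\bullet}$) factor while the first factor, coming from $G/T$, is the one cut down by $J$.

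Third, I would check that $\mathrm{Id}\otimes\psi^{k}$ descends to the quotient of $W$-invariants, giving the map $\overline{\mathrm{Id}\otimes\psi^{k}}$. Since $\psi^{k}$ is a ring homomorphism it fixes $1$, so a generator $f(x)\otimes 1$ of $J$ maps to $f(x)\otimes 1\in J$ and the ideal is preserved; and $\mathrm{Id}\otimes\psi^{k}$ commutes with the diagonal $W$-action because $\psi^{k}$ commutes with the $W$-action on $H^{*}\left(BT\right)$, which is exactly the $W$-equivariance of $P^{k}$ recorded earlier. Splicing this identification into the square of Proposition~\ref{prop:BcomG and the realizations} then replaces $P^{k}$ by $\overline{\mathrm{Id}\otimes\psi^{k}}$ and its top space by $\left(H^{*}\left(BT\right)\otimes H^{*}\left(BT\right)\right)^{W}/J$, which is the desired diagram. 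The only real obstacle is bookkeeping: keeping straight which tensor factor carries $\psi^{k}$ and which carries the ideal $J$, and confirming that letting $\psi^{k}$ act on a single factor is compatible with the diagonal $W$-action. All the genuinely hard input, namely the level-wise isomorphism and the passage to realizations, has already been supplied by the preceding propositions.
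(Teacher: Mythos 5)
Your proposal is correct and follows essentially the same route as the paper: starting from Proposition \ref{prop:BcomG and the realizations}, applying the K\"unneth theorem together with the fact that realization commutes with products to identify the right-hand column with $\left(H^{*}\left(G/T\right)\otimes H^{*}\left(BT\right)\right)^{W}$ and $\left(P^{k}\right)^{*}$ with $\mathrm{Id}\otimes\psi^{k}$, and then substituting Borel's description of $H^{*}\left(G/T\right)$ as in Proposition 7.1 of \cite{key-1}. The only difference is that you spell out details the paper leaves implicit (the descent of $\mathrm{Id}\otimes\psi^{k}$ past the ideal $J$ and the diagonal $W$-action, and the exactness of taking $W$-invariants over $\mathbb{R}$), which is a welcome amplification rather than a new argument.
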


\begin{proof}
In the Diagram \ref{eq:Diagram for invaiant cohomoloty-1-1} we can
consider the naturality on the Kunneth formulas and the fact that
the realization of the simplicial product are naturally isomorphic
to the product of the realizations of each of the simplicial spaces
involved (see Theorem 14.3 of \cite{key-6}). Then we obtain the commuting
diagram 
\begin{equation}
\xymatrix{H^{*}\left(B_{com}G\right)\ar[r]\ar[d]^{\Phi^{k}} & \left(H^{*}\left(G/T\right)\otimes H^{*}\left(BT\right)\right)^{W}\ar[d]^{\mathrm{Id}\otimes\psi^{k}}\\
H^{*}\left(B_{com}G\right)\ar[r] & \left(H^{*}\left(G/T\right)\otimes H^{*}\left(BT\right)\right)^{W},
}
\label{eq:Power maps diagram 1}
\end{equation}
where the horizontal maps are still isomorphisms. 

Next in the proof of Proposition 7.1 of \cite{key-1} they replace
$H^{*}\left(G/T\right)$ giving natural arguments, which allow us
to obtain our conclusion. 
\end{proof}
The last result is important since it tell us that in order to obtain
the effect of power maps on cohomology of $B_{\mathrm{com}}G$\footnote{When $\mathrm{Hom}\left(\mathbb{Z}^{m},G\right)$ is path connected
for every $m$.}, we need to understand their effect when the Lie group is a torus,
$T=\left(S^{1}\right)^{n}$. 

\medskip{}

\begin{thm}
\label{thm: Power maps for torus}Consider the $k$-th power map 
\begin{align*}
\psi^{k}:T & \rightarrow T\\
\left(t_{1},\ldots,t_{n}\right) & \mapsto\left(t_{1}^{k},\ldots,t_{n}^{k}\right).
\end{align*}
 Then by identifying $H^{*}\left(BT\right)\cong\mathbb{R}\left[x_{1},\ldots,x_{n}\right]$,
the induced $k$-th power map is characterized by 
\begin{align*}
\mathbb{R}\left[x_{1},\ldots,x_{n}\right] & \rightarrow\mathbb{R}\left[x_{1},\ldots,x_{n}\right]\\
x_{i} & \mapsto kx_{i}.
\end{align*}
\end{thm}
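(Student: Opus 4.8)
The plan is to reduce to the case of a single circle factor and then compute directly via the identification $BS^{1}\simeq\mathbb{C}P^{\infty}=K(\mathbb{Z},2)$. Since $T=\left(S^{1}\right)^{n}$ and the power map $\psi^{k}$ acts as the product of the circle power maps $z\mapsto z^{k}$ on each factor, we have $BT\simeq\left(BS^{1}\right)^{n}$ and $B\psi^{k}\simeq\left(B\psi_{1}^{k}\right)^{\times n}$, where $\psi_{1}^{k}:S^{1}\rightarrow S^{1}$ is the power map on a single circle. By the K\"unneth theorem and naturality of cohomology, the induced map on $H^{*}\left(BT\right)\cong H^{*}\left(BS^{1}\right)^{\otimes n}\cong\mathbb{R}\left[x_{1},\ldots,x_{n}\right]$ is the tensor product of the maps induced on each factor, where $x_{i}$ is the pullback under the $i$-th projection of the canonical generator $x\in H^{2}\left(BS^{1}\right)$. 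Thus it suffices to show that the single-circle power map sends $x\mapsto kx$, since then $x_{i}\mapsto kx_{i}$ on each factor and, the map being a graded ring homomorphism, this determines it completely.

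For the single circle I would identify $BS^{1}\simeq\mathbb{C}P^{\infty}=K(\mathbb{Z},2)$ and let $x$ be the generator of $H^{2}\left(BS^{1}\right)$ dual to the canonical generator of $\pi_{2}\left(BS^{1}\right)\cong\pi_{1}\left(S^{1}\right)\cong\mathbb{Z}$. The homomorphism $\psi_{1}^{k}:S^{1}\rightarrow S^{1}$, $z\mapsto z^{k}$, induces multiplication by $k$ on $\pi_{1}\left(S^{1}\right)\cong\mathbb{Z}$, since it is a degree-$k$ self-map of the circle. Passing to classifying spaces, $B\psi_{1}^{k}$ induces multiplication by $k$ on $\pi_{2}\left(BS^{1}\right)\cong\mathbb{Z}$. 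Using the Hurewicz isomorphism $\pi_{2}\left(BS^{1}\right)\cong H_{2}\left(BS^{1},\mathbb{Z}\right)$ (valid since $BS^{1}$ is simply connected, so $H_{1}=0$) together with the universal coefficient theorem, $B\psi_{1}^{k}$ induces multiplication by $k$ on $H^{2}\left(BS^{1},\mathbb{Z}\right)\cong\mathbb{Z}$, hence on $H^{2}\left(BS^{1},\mathbb{R}\right)$; that is, $\left(B\psi_{1}^{k}\right)^{*}\left(x\right)=kx$. Since $H^{*}\left(BS^{1},\mathbb{R}\right)\cong\mathbb{R}\left[x\right]$ is generated as a ring in degree two, this yields $x^{j}\mapsto k^{j}x^{j}$, and combined with the reduction above gives $x_{i}\mapsto kx_{i}$ on $\mathbb{R}\left[x_{1},\ldots,x_{n}\right]$.

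The main obstacle is the single-circle computation, specifically the careful identification of the canonical generator $x$ with the dual of the generator of $\pi_{2}\left(BS^{1}\right)$, and tracking that the degree-$k$ map on the circle really does induce multiplication by $k$ (and not, say, $k$ up to sign) on $H^{2}$. An equivalent and perhaps cleaner way to see this, which avoids homotopy groups and fits the Chern-Weil viewpoint of the rest of the paper, is to observe that $B\psi_{1}^{k}$ pulls back the universal line bundle $L$ over $BS^{1}$ to the line bundle associated to the composite representation $z\mapsto z^{k}$, namely its $k$-th tensor power $L^{\otimes k}$. Then $\left(B\psi_{1}^{k}\right)^{*}c_{1}\left(L\right)=c_{1}\left(L^{\otimes k}\right)=k\,c_{1}\left(L\right)$, and identifying $x=c_{1}\left(L\right)$ gives the same conclusion. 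Either route reduces the verification to an elementary fact about the circle, after which naturality and the ring structure do the rest.
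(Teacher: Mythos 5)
Your proposal is correct and follows essentially the same route as the paper: both arguments rest on the degree-$k$ self-map of $S^{1}$ inducing multiplication by $k$ on $\pi_{1}$, then transfer this through the delooping isomorphism $\pi_{2}\left(B-\right)\cong\pi_{1}\left(-\right)$, the Hurewicz theorem, and the universal coefficient theorem to get multiplication by $k$ on $H^{2}$, concluding by polynomial generation of the cohomology ring. The only cosmetic difference is that you split into circle factors at the space level via $BT\simeq\left(BS^{1}\right)^{n}$ and K\"unneth, whereas the paper splits at the level of $\pi_{1}\left(T\right)\cong\pi_{1}\left(S^{1}\right)^{n}$ and runs the same chain for $BT$ all at once; your line-bundle alternative via $c_{1}\left(L^{\otimes k}\right)=k\,c_{1}\left(L\right)$ is a valid shortcut but is not the paper's method.
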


\begin{proof}
On a circle the $k$-th power of its elements induces the multiplication
by $k$ on the fundamental group: if 
\[
S^{1}=\left\{ z\in\mathbb{C}\mid\left|z\right|=1\right\} 
\]
then the $k$-th power map is given by 
\begin{align*}
\eta:S^{1} & \rightarrow S^{1}\\
z & \mapsto z^{k}
\end{align*}
which is know to be a map of degree $k$. This means that if identify
$\pi_{1}\left(S^{1}\right)\cong\mathbb{Z}$ then the $k$-th power
maps induces multiplication by $k$ on the fundamental group. 

Consider the projections
\begin{align*}
p_{i}:\left(S^{1}\right)^{n} & \rightarrow S^{1}\\
\left(z_{1},\ldots,z_{n}\right) & \mapsto z_{i}.
\end{align*}
It is well known that the map $q:\pi_{1}\left(\left(S^{1}\right)^{n}\right)\rightarrow\pi_{1}\left(S^{1}\right)^{n}$
given by 
\[
q\left(\left[\alpha\right]\right):=\left(\left[p_{1}\circ\alpha\right],\ldots,\left[p_{n}\circ\alpha\right]\right)
\]
is an isomoprihsm, since $S^{1}$ is path connected. Since the power
map $\psi^{k}:T\rightarrow T$ considers the $k$-th power component
wise, it follows that we have a commutative diagram
\[
\xymatrix{\pi_{1}\left(T\right)\ar[r]^{q}\ar[d]^{\psi_{*}^{k}} & \pi_{1}\left(S^{1}\right)^{n}\ar[d]^{\prod\eta_{*}}\\
\pi_{1}\left(T\right)\ar[r]^{q} & \pi_{1}\left(S^{1}\right)^{n}.
}
\]
Since the horizontal maps are isomorphisms, we have the characterization
\begin{align*}
\psi_{*}^{k}:\pi_{1}\left(T\right) & \rightarrow\pi_{1}\left(T\right)\\
\alpha & \mapsto k\alpha
\end{align*}
where we see $\alpha=\left(\alpha_{1},\ldots,\alpha_{n}\right)\in\mathbb{Z}^{n}$,
and $k\alpha=\left(k\alpha_{1},\ldots,k\alpha_{n}\right)$. 

Now let us consider the fiber sequence of the classifying space of
the torus 
\[
T\rightarrow ET\rightarrow BT.
\]
This induces an exact sequence on homotopy groups 
\[
\cdots\rightarrow\pi_{m}\left(ET\right)\rightarrow\pi_{m}\left(BT\right)\rightarrow\pi_{m-1}\left(T\right)\rightarrow\cdots\pi_{1}\left(ET\right)\rightarrow\pi_{1}\left(BT\right)\rightarrow
\]
\[
\rightarrow\pi_{0}\left(T\right)\rightarrow\pi_{0}\left(ET\right)\rightarrow\pi_{0}\left(BT\right)\rightarrow0
\]
but since $ET$ is contractible, we get an isomorphism $\pi_{m}\left(BT\right)\rightarrow\pi_{m-1}\left(T\right)$.
In particular we get 
\[
\pi_{m}\left(BT\right)\text{=}\begin{cases}
\mathbb{Z}^{n} & m=2,\\
0 & \mathrm{otherwise}.
\end{cases}
\]
Since the exact sequence is natural, we get that the power map on
$BT$ induces the multiplication by $k$ on the second homotopy group.
Furthermore since $BT$ is simply connected, by Hurewicz\textasciiacute s
theorem we get that $H_{2}\left(BT,\mathbb{Z}\right)\cong\pi_{2}\left(BT\right)$,
and once again because of naturality the effect on the second homology
is multiplication by $k$. 

We now apply the universal coefficients theorem to get that 
\[
H^{2}\left(BT\right)\cong\mathrm{Hom}\left(H_{2}\left(BT,\mathbb{Z}\right),\mathbb{R}\right)\cong\mathbb{R}^{n}.
\]
Naturality allow us to conclude that the effect of the $k$-th power
map is once again multiplication by $k$. Finally it is known that
the real cohomology of $BT$ is the polynomial ring $\mathbb{R}\left[x_{1},\ldots,x_{n}\right]$
where $x_{i}\in H^{2}\left(BT\right)$ for $1\leq i\leq n$ (see \cite{key-10},
Proposition 8.11). Since we know that the effect of the $k$ power
map is multiplication by $k$ on the $x_{i}$, this determines the
effect on the whole cohomology ring. 
\end{proof}
As corollary of Proposition \ref{thm: Cohomology and Power maps}
and Theorem \ref{thm: Power maps for torus} we obtain the following:

\medskip{}

\begin{thm}
Identify the real cohomology ring of an $n$-torus with $\mathbb{R}\left[x_{1},\ldots,x_{n}\right]$,
and suppose that $G$ is a Lie group such that 
\[
\mathrm{Hom}\left(\mathbb{Z}^{m},G\right)=\mathrm{Hom}\left(\mathbb{Z}^{m},G\right)_{1}
\]
 for every $m$, then 
\[
H^{\text{*}}\left(B_{com}G\right)\cong\left(\mathbb{R}\left[x_{1},\ldots,x_{n}\right]\otimes\mathbb{R}\left[y_{1},\ldots,y_{n}\right]\right)^{W}/J.
\]
Here $J$ is the ideal generated by the invariant polynomials of positive
degree on the $x_{i}$ under the action of the Weyl group, $W$. Further,
the power maps $\Phi^{k}:H^{\text{*}}\left(B_{com}G,F\right)\rightarrow H^{\text{*}}\left(B_{com}G,F\right)$
are induced by the homomorphism characterized by sending $x_{i}\mapsto x_{i}$
and $y_{i}\mapsto ky_{i}$ for every $1\leq i\leq n$.
\end{thm}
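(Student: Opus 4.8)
The plan is to read off both assertions directly from the two results established just above, so the argument amounts to unwinding identifications rather than carrying out a new computation. First I would invoke Proposition \ref{thm: Cohomology and Power maps}, whose horizontal isomorphism already provides
\[
H^{*}\left(B_{com}G\right)\cong\left(H^{*}\left(BT\right)\otimes H^{*}\left(BT\right)\right)^{W}/J.
\]
To put this in the form of the statement I would fix the identification $H^{*}\left(BT\right)\cong\mathbb{R}\left[x_{1},\ldots,x_{n}\right]$ on the first tensor factor and the same identification with renamed variables $\mathbb{R}\left[y_{1},\ldots,y_{n}\right]$ on the second factor. The only point to record is that the description of $J$ from Section 3, namely the ideal generated by the elements $p(x)\otimes 1$ with $p$ a $W$-invariant polynomial of positive degree, becomes under this relabelling precisely the ideal generated by the positive-degree $W$-invariants in the $x_{i}$, as claimed.

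For the effect of the power maps I would again use the commuting square of Proposition \ref{thm: Cohomology and Power maps}, which identifies $\Phi^{k}$ on $H^{*}\left(B_{com}G\right)$ with the induced map $\overline{\mathrm{Id}\otimes\psi^{k}}$ on the quotient. Here $\psi^{k}$ is the induced power map on the second factor $H^{*}\left(BT\right)$, so Theorem \ref{thm: Power maps for torus} applies directly and gives $\psi^{k}(y_{i})=ky_{i}$, while the identity on the first factor fixes each $x_{i}$. Combining the two yields the asserted rule $x_{i}\mapsto x_{i}$ and $y_{i}\mapsto ky_{i}$.

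Since this reduces to two already-proved statements, there is no substantial obstacle; the only care needed is bookkeeping. I would check that $\mathrm{Id}\otimes\psi^{k}$ genuinely descends to the quotient by $J$, so that $\overline{\mathrm{Id}\otimes\psi^{k}}$ is well defined: this is immediate because the generators $p(x)\otimes 1$ of $J$ lie in the first tensor factor, on which the map acts as the identity, so $J$ is carried into itself. The one genuine risk is a convention mismatch about which factor the power map acts on nontrivially, and this is pinned down by Diagram \ref{eq:Power maps diagram 1}, where the nontrivial factor is the $BT$-coordinate carrying the $y_{i}$.
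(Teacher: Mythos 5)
Your proposal is correct and matches the paper's own treatment: the paper states this theorem as an immediate corollary of Proposition \ref{thm: Cohomology and Power maps} and Theorem \ref{thm: Power maps for torus}, with no further argument, and your unwinding of the identifications (including the check that $\mathrm{Id}\otimes\psi^{k}$ preserves $J$ and the correct bookkeeping of which tensor factor carries the $y_{i}$) is exactly the intended derivation.
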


\medskip{}

\section{Generators of $H^{*}\left(B_{com}G,\mathbb{R}\right)$ for $G=U\left(n\right),\mathrm{Sp}\left(n\right)\:\mathrm{and}\:SU\left(n\right)$}

Here we are going to examine the cases of the Lie groups 
\[
G=U\left(n\right),\mathrm{Sp}\left(n\right)\:\mathrm{and}\:SU\left(n\right).
\]
 From this we see that possible differences between the different
cases depend entirely on the Weyl group and its action on the cohomology
of $BT$. For this we first need to establish some facts and definitions. 

If $B_{\mathrm{com}}G_{1}=B_{\mathrm{com}}G$ we know that
\[
H^{\text{*}}\left(B_{com}G,\mathbb{R}\right)\cong\left(\mathbb{R}\left[x_{1},\ldots,x_{n}\right]\otimes\mathbb{R}\left[y_{1},\ldots,y_{n}\right]\right)^{W}/J.
\]
While in general it is known that for a compact and connected Lie
group $G$
\[
H^{\text{*}}\left(BG,\mathbb{R}\right)\cong H^{\text{*}}\left(BT,\mathbb{R}\right)^{W}\cong P\left[\mathfrak{t}\right]^{W},
\]
where $W$ is its Weil group. Its action is induced by adjunction.
That is, if $\mathfrak{t}$ is the Lie algebra of $T$, $P\left[\mathfrak{t}\right]$
is the polynomial algebra of $\mathfrak{t}$. An element $\left[n\right]\in W\cong N_{G}\left(T\right)/T$
has a well defined action given by adjunction, $\mathrm{ad}\left(n\right):\mathfrak{t}\rightarrow\mathfrak{t}$.
This in turn induces an action of $W$ on $P\left[\mathfrak{t}\right]$.
Even further, if $n$ is the dimension of $\mathfrak{t}$, $P\left[\mathfrak{t}\right]$
can be identified with $\mathbb{R}\left[z_{1},\ldots,z_{n}\right]$,
and under such identification, we have an action of $W$ on the latter. 

There is a natural inclusion $B_{\mathrm{com}}G\hookrightarrow BG$
, inducing a map 
\[
\iota:H^{\text{*}}\left(BG,\mathbb{R}\right)\rightarrow H^{\text{*}}\left(B_{com}G,\mathbb{R}\right).
\]
In terms of the previous identifications, $\iota$ is induced by the
homomorphism (\cite{key-14}, Corollary A.2.)
\begin{align*}
\mathbb{R}\left[z_{1},\ldots,z_{n}\right] & \rightarrow\mathbb{R}\left[x_{1},\ldots,x_{n}\right]\otimes\mathbb{R}\left[y_{1},\ldots,y_{n}\right]\\
z_{i} & \mapsto x_{i}+y_{i}.
\end{align*}
Additionally we saw in the previous section that the power maps, 
\[
\Phi^{k}:H^{\text{*}}\left(B_{com}G,\mathbb{R}\right)\rightarrow H^{\text{*}}\left(B_{com}G,\mathbb{R}\right)
\]
 are induced by the map characterized by sending $x_{i}\mapsto x_{i}$
and $y_{i}\mapsto ky_{i}$ for every $1\leq i\leq n$. 
\begin{defn}
We call the subalgebra generated by $\left\{ \Phi^{k}\left(\mathrm{Im}\iota\right)\mid k\in\mathbb{Z}\setminus\left\{ 0\right\} \right\} \subset H^{\text{*}}\left(B_{com}G,\mathbb{R}\right)$
by
\[
\mathcal{S}:=\left\langle \Phi^{k}\left(\mathrm{Im}\iota\right)\mid k\in\mathbb{Z}\setminus\left\{ 0\right\} \right\rangle .
\]
On this section we use the previous maps to see that if $G=U\left(n\right),\mathrm{Sp}\left(n\right)\:\mathrm{and}\:SU\left(n\right)$
then $\mathcal{S}$ is all of $H^{*}\left(B_{\mathrm{com}}G,\mathbb{R}\right)$.
We do this by dealing with the explicit descriptions of their actions
and the specific Weyl groups on each case. 
\end{defn}

Before dealing with each individual case, it is worth proving the
following
\begin{lem}
The subalgebra $\mathcal{S}$ is closed under the power maps.
\end{lem}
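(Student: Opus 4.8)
The plan is to exploit two structural facts that were already established for the power maps on cohomology. First, each $\Phi^{k}$ is an algebra endomorphism of $H^{*}(B_{\mathrm{com}}G)$: by the description above it is induced by the ring homomorphism fixing every $x_{i}$ and sending $y_{i}\mapsto k y_{i}$, and this homomorphism is $W$-equivariant and preserves the ideal $J$ (it fixes the $x_{i}$, hence fixes the generators $p(x)\otimes 1$ of $J$), so it descends to an algebra homomorphism of the quotient $(\mathbb{R}[x_{1},\dots,x_{n}]\otimes\mathbb{R}[y_{1},\dots,y_{n}])^{W}/J$. Second, the power maps obey the composition law $\Phi^{j}\circ\Phi^{k}=\Phi^{jk}$, which I would read off directly from the effect on generators: $(\Phi^{j}\circ\Phi^{k})(x_{i})=x_{i}$ and $(\Phi^{j}\circ\Phi^{k})(y_{i})=\Phi^{j}(k y_{i})=kj\,y_{i}$.

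Because $\Phi^{j}$ is an algebra homomorphism, it suffices to show that $\Phi^{j}$ carries the generating set $\{\,\Phi^{k}(\iota(s)) : k\in\mathbb{Z}\setminus\{0\},\ s\in H^{*}(BG)\,\}$ of $\mathcal{S}$ into $\mathcal{S}$. Applying the composition law gives $\Phi^{j}(\Phi^{k}(\iota(s)))=\Phi^{jk}(\iota(s))$. If $j\neq 0$ then $jk\neq 0$, so $\Phi^{jk}(\iota(s))$ is once more one of the listed generators of $\mathcal{S}$, and closure is immediate.

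The one case that needs separate attention is $j=0$, and this is the only genuinely non-automatic step. Here $\Phi^{0}$ is induced by $x_{i}\mapsto x_{i}$, $y_{i}\mapsto 0$, so $\Phi^{0}(\iota(s))$ is obtained from $s$ by first substituting $z_{i}\mapsto x_{i}+y_{i}$ and then setting every $y_{i}=0$; that is, it is the class of $s(x_{1},\dots,x_{n})$. Since $s\in H^{*}(BG)\cong\mathbb{R}[z_{1},\dots,z_{n}]^{W}$ is $W$-invariant, $s(x_{1},\dots,x_{n})$ is a $W$-invariant polynomial in the $x_{i}$; if $s$ has positive degree it therefore lies in $J$ and vanishes in the quotient, while if $s$ has degree zero it is a constant. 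In either case $\Phi^{0}(\iota(s))\in\mathcal{S}$. Combining the two cases yields $\Phi^{j}(\mathcal{S})\subseteq\mathcal{S}$ for every $j\in\mathbb{Z}$, which is the claim. I expect the whole argument to be short and formal, with the collapse of $\mathrm{Im}\,\iota$ into $J$ after setting $y=0$ being the only point that requires an honest verification rather than the composition law alone.
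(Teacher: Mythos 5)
Your proof is correct and takes essentially the same route as the paper's: both arguments rest on $\Phi^{k}$ being an algebra homomorphism combined with the composition law $\Phi^{j}\circ\Phi^{k}=\Phi^{jk}$, which together reduce closure of $\mathcal{S}$ to the observation that generators map to generators. Yours is in fact slightly more careful than the paper's one-line proof, which only writes linear combinations of the generators and silently passes over the case $j=0$ that you settle by noting that $\Phi^{0}(\iota(s))$ collapses into $J$.
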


\begin{proof}
This is true since $\Phi^{k}$ is a $\mathbb{R}$-homomorphism of
algebras as well because of the equality $\Phi^{k}\circ\Phi^{l}=\Phi^{kl}$.
This implies that for $q_{j}\in\mathbb{R}\left[z_{1},\ldots,z_{n}\right]$,
$\alpha_{j}\in\mathbb{R}$
\[
\Phi^{k}\left(\sum_{l=1}^{s}\alpha_{j}\Phi^{k_{j}}\circ\iota\left(q_{j}\right)\right)=\sum_{l=1}^{s}\alpha_{j}\Phi^{kk_{j}}\circ\iota\left(q_{j}\right)\in\mathcal{S}.
\]
\end{proof}
Next we explore individually the particular cases of $G$ equal to
$U\left(n\right)$, $SU\left(n\right)$ and $\mathrm{Sp}\left(n\right)$
to show that 
\[
\mathcal{S}=H^{*}\left(B_{\mathrm{com}}G,\mathbb{R}\right).
\]

\medskip{}

\subsection{Generators of $H^{*}\left(B_{com}U\left(n\right),\mathbb{R}\right)$: }

For this case recall that the Weil group of $U\left(n\right)$ is
isomorphic to the symmetric group $S_{n}$. By the previous section
we know that 
\[
H^{*}\left(B_{com}U\left(n\right),\mathbb{R}\right)=\left(\mathbb{R}\left[x_{1},\ldots,x_{n}\right]\otimes\mathbb{R}\left[y_{1},\ldots,y_{n}\right]\right)^{S_{n}}/J
\]
where $S_{n}$ acts diagonally on the tensor product, permuting the
variables of each factor. $J$ is the ideal generated by the symmetric
polynomials of positive degree on the $x_{i}$. It is also known that
\[
H^{*}\left(BU\left(n\right),\mathbb{R}\right)=\left(\mathbb{R}\left[x_{1},\ldots,x_{n}\right]\right)^{S_{n}},
\]
where the action is once again by permuting variables. $H^{*}\left(BU\left(n\right),\mathbb{R}\right)$
is generated by the power polynomials 
\[
p_{m}:=z_{1}^{m}+z_{2}^{m}+\cdots+z_{n}^{m},
\]
which are clearly invariant under the action of $S_{n}$. These polynomials
have their counterparts on two variables polynomials in the form of
\[
P_{a,b}\left(n\right):=x_{1}^{a}y_{1}^{b}+x_{2}^{a}y_{2}^{b}+\cdots+x_{n}^{a}y_{n}^{b},
\]
where $1\leq a+b\leq n$. These generate the algebra $\left(\mathbb{R}\left[x_{1},\ldots,x_{n}\right]\otimes\mathbb{R}\left[y_{1},\ldots,y_{n}\right]\right)^{S_{n}}$
(See \cite{key-11}, Theorem 1). Thus to prove that $\mathcal{S}$
is all of $H^{*}\left(B_{\mathrm{com}}U\left(n\right),\mathbb{R}\right)$
it is enough to see that the multisymmetric polynomials (modulo $J$)
are in fact in $\mathcal{S}$. To see it, we first need a couple of
lemmas.

\medskip{}

\begin{lem}
For every $n\in\mathbb{N}$ and $1\leq a+b\leq n$ with $a,b\geq0$
we have 
\[
\Phi^{k}\left(P_{a,b}\left(n\right)\right)=k^{b}P_{a,b}\left(n\right).
\]
\end{lem}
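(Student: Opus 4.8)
The plan is to obtain the identity by a direct computation from the explicit form of the power map on cohomology. Recall from the previous section (and as restated just above the definition of $\mathcal{S}$) that, under the identification $H^*(B_{\mathrm{com}}U(n),\mathbb{R}) \cong (\mathbb{R}[x_1,\ldots,x_n] \otimes \mathbb{R}[y_1,\ldots,y_n])^{S_n}/J$, the power map $\Phi^k$ is induced by the $\mathbb{R}$-algebra homomorphism characterized by $x_i \mapsto x_i$ and $y_i \mapsto k y_i$ for every $1 \leq i \leq n$. The crucial feature I would exploit is that $\Phi^k$ is a homomorphism of $\mathbb{R}$-algebras, so it is additive and multiplicative and is completely determined by its values on the generators.

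First I would apply $\Phi^k$ to the defining expression $P_{a,b}(n) = \sum_{i=1}^n x_i^a y_i^b$. By additivity this reduces the problem to evaluating $\Phi^k$ on each monomial $x_i^a y_i^b$. Using multiplicativity together with $x_i \mapsto x_i$ and $y_i \mapsto k y_i$, I get $\Phi^k(x_i^a y_i^b) = x_i^a (k y_i)^b = k^b\, x_i^a y_i^b$. Summing over $i$ and pulling out the scalar $k^b$ then gives $\Phi^k(P_{a,b}(n)) = k^b \sum_{i=1}^n x_i^a y_i^b = k^b P_{a,b}(n)$, which is the claimed formula.

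I do not expect any genuine obstacle, since the statement is a formal consequence of the description of $\Phi^k$ as the algebra map fixing the $x_i$ and rescaling the $y_i$ by $k$; the exponent $b$ on $y_i$ is exactly what records the power of $k$ that is produced. The only point requiring a word of care is that the equality lives in the quotient by $J$: but because $\Phi^k$ fixes each $x_i$, it preserves the ideal $J$ (whose generators are polynomials in the $x_i$ alone), so it descends to the quotient and the polynomial-level computation above is valid modulo $J$. Note also that the hypothesis $1 \leq a+b \leq n$ plays no role in this computation — it only serves to single out the $P_{a,b}(n)$ that arise as algebra generators — so the identity in fact holds for all $a,b \geq 0$.
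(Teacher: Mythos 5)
Your proof is correct and is essentially identical to the paper's: both use that $\Phi^{k}$ is an algebra homomorphism sending $x_{i}\mapsto x_{i}$ and $y_{i}\mapsto ky_{i}$, evaluate it monomial by monomial to get $\Phi^{k}\left(x_{i}^{a}y_{i}^{b}\right)=k^{b}x_{i}^{a}y_{i}^{b}$, and sum. Your extra remark that $\Phi^{k}$ preserves the ideal $J$ and hence descends to the quotient is a small point of care the paper leaves implicit, but it does not change the argument.
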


\begin{proof}
Since $\Phi^{k}$ is a homomorphism of algebras, we have 
\begin{align*}
\Phi^{k}\left(P_{a,b}\left(n\right)\right) & =\Phi^{k}\left(x_{1}^{a}y_{1}^{b}+x_{2}^{a}y_{2}^{b}+\cdots+x_{n}^{a}y_{n}^{b}\right)\\
 & =\sum_{i=1}^{n}\Phi^{k}\left(x_{i}^{a}y_{i}^{b}\right).
\end{align*}
But we have 
\begin{align*}
\Phi^{k}\left(x_{i}^{a}y_{i}^{b}\right) & =\Phi^{k}\left(x_{i}\right)^{a}\Phi^{k}\left(y_{i}\right)^{b}\\
 & =k^{b}x_{i}^{a}y_{i}^{b}.
\end{align*}
 Where the last equality is true since we already saw that $\Phi^{k}\left(x_{i}\right)=x_{i}$
and $\Phi^{k}\left(x_{i}\right)=ky_{i}$ for every $1\leq i\leq n$. 
\end{proof}
To prove the goal of this subsection, we illustrate explicitly the
cases $n=2$ and $n=3$.
\begin{itemize}
\item Suppose first that $n=2$.

We want to show that the following multisymmetric polynomials are
indeed in $\mathcal{S}$:
\begin{itemize}
\item $P_{0,1}\left(2\right)=y_{1}+y_{2}$,
\item $P_{1,1}\left(2\right)=x_{1}y_{1}+x_{2}y_{2}$ and
\item $P_{0,2}\left(2\right)=y_{1}^{2}+y_{2}^{2}$.
\end{itemize}
We ignore $P_{1,0}\left(2\right)=x_{1}+x_{2}$ since this is zero
modulo $J$. For this first observe that 
\[
\iota\left(z_{1}+z_{2}\right)=\left(x_{1}+y_{1}\right)+\left(x_{2}+y_{2}\right)=\left(x_{1}+x_{2}\right)+\left(y_{1}+y_{2}\right)=P_{1,0}\left(2\right)+P_{0,1}\left(2\right)
\]
clearly belongs to $\mathcal{S}$. Since $P_{1,0}\left(2\right)=0\,\mathrm{mod}\,J$
we are done. For $P_{1,1}\left(2\right)$ and $P_{0,2}\left(2\right)$
notice that the total degree (the sum of the power of each term) is
2, thus we have to consider $\iota\left(p_{2}\right)$: 
\begin{align*}
\iota\left(z_{1}^{2}+z_{2}^{2}\right) & =\left(x_{1}+y_{1}\right)^{2}+\left(x_{2}+y_{2}\right)^{2}\\
 & =\left(x_{1}^{2}+x_{2}^{2}\right)+2\left(x_{1}y_{1}+x_{2}y_{2}\right)+\left(y_{1}^{2}+y_{2}^{2}\right).
\end{align*}
This can be rewritten as 
\[
\iota\left(z_{1}^{2}+z_{2}^{2}\right)=P_{1,0}\left(2\right)+2P_{1,1}\left(2\right)+P_{0,2}\left(2\right).
\]
Then we consider
\begin{align*}
\Phi^{-1}\left(\iota\left(z_{1}^{2}+z_{2}^{2}\right)\right) & =\left(x_{1}^{2}+x_{2}^{2}\right)-2\left(x_{1}y_{1}+x_{2}y_{2}\right)+\left(y_{1}^{2}+y_{2}^{2}\right)
\end{align*}
giving us that 
\[
\iota\left(z_{1}^{2}+z_{2}^{2}\right)+\Phi^{-1}\left(\iota\left(z_{1}^{2}+z_{2}^{2}\right)\right)=2\left(y_{1}^{2}+y_{2}^{2}\right)\mathrm{mod}\,J
\]
meaning that $P_{0,2}\left(2\right)\in\mathcal{S}$, since $\mathcal{S}$
is a subalgebra closed under power maps. Finally modulo $J$ we get
\[
P_{1,1}\left(2\right)=\frac{\iota\left(z_{1}^{2}+z_{2}^{2}\right)-P_{0,2}\left(2\right)}{2}\in\mathcal{S}
\]
which finishes the proof for $n=2$.
\item Suppose now that $n=3$.

The arguments used in the case $n=2$ can be used to obtain the first
two of the next equalities, where once again they are taken to be
modulo $J$: 
\begin{enumerate}
\item $P_{0,1}\left(3\right)=\iota\left(z_{1}+z_{2}+z_{3}\right)$ .
\item $P_{0,2}\left(3\right)=\frac{1}{2}\left(\iota\left(z_{1}^{2}+z_{2}^{2}+z_{3}^{2}\right)+\Phi^{-1}\left(\iota\left(z_{1}^{2}+z_{2}^{2}+z_{3}^{2}\right)\right)\right)$.
\item $P_{1,1}\left(3\right)=\frac{1}{2}\left(\iota\left(z_{1}^{2}+z_{2}^{2}+z_{3}^{2}\right)-P_{0,2}\right)$.
\end{enumerate}
We are left to obtain $P_{a,b}\left(3\right)$ such that $a+b=3$.
For this we can reorder to see that
\begin{align*}
\iota\left(z_{1}^{3}+z_{2}^{3}+z_{3}^{3}\right) & =\left(x_{1}+y_{1}\right)^{3}+\left(x_{2}+y_{2}\right)^{3}+\left(x_{3}+y_{3}\right)^{3}\\
 & =\left(x_{1}^{3}+x_{2}^{3}+x_{3}^{3}\right)+3\left(x_{1}^{2}y_{1}+x_{2}^{2}y_{2}+x_{3}^{2}y_{3}\right)\\
 & \;+3\left(x_{1}y_{1}^{2}+x_{2}y_{2}^{2}+x_{3}y_{3}^{2}\right)+\left(y_{1}^{2}+y_{2}^{2}+y_{3}^{3}\right)
\end{align*}
which amounts to 
\[
\iota\left(z_{1}^{3}+z_{2}^{3}+z_{3}^{3}\right)=3P_{2,1}+3P_{1,2}+P_{0,3}\,\mathrm{mod}\,J.
\]
We use the power maps to get that 
\[
\Phi^{-1}\left(\iota\left(z_{1}^{3}+z_{2}^{3}+z_{3}^{3}\right)\right)=-3P_{2,1}+3P_{1,2}-P_{0,3}\,\mathrm{mod}J.
\]
By adding the last two equalities we get 
\[
P_{1,2}\mathrm{mod}J=\frac{1}{6}\left(\Phi^{-1}\left(\iota\left(z_{1}^{3}+z_{2}^{3}+z_{3}^{3}\right)\right)+\iota\left(z_{1}^{3}+z_{2}^{3}+z_{3}^{3}\right)\right)\in\mathcal{S}.
\]
 Thus we have $\iota\left(z_{1}^{3}+z_{2}^{3}+z_{3}^{3}\right)-3P_{1,2}\,\mathrm{mod}\,J\in\mathcal{S}$,
and by closure under power maps we obtain
\[
8P_{0,3}\mathrm{mod}J=\Phi^{2}\left(\iota\left(z_{1}^{3}+z_{2}^{3}+z_{3}^{3}\right)-3P_{1,2}\right)-6\left(\iota\left(z_{1}^{3}+z_{2}^{3}+z_{3}^{3}\right)-3P_{1,2}\right)\in\mathcal{S}
\]
from we conclude that $P_{0,3}\,\mathrm{mod}\,J\in\mathcal{S}$. We
finally have 
\[
P_{2,1}=\frac{1}{3}\left(\iota\left(z_{1}^{3}+z_{2}^{3}+z_{3}^{3}\right)-3P_{1,2}-P_{0,3}\right)\mathrm{mod}J
\]
which finishes the case $n=3$.
\end{itemize}
In the previous two examples we see that for non negative numbers
$a$ and $b$, we proved that $P_{a,b}\left(n\right)$ belongs to
$\mathcal{S}$ using induction on the value $a+b$. This was done
in such a way that the induction process did not depend on $n$. These
arguments can be generalized more methodically to obtain.

\medskip{}

\begin{thm}
\label{thm:Generators HBUn}The algebra $H^{*}\left(B_{com}U\left(n\right);\mathbb{R}\right)$
is equal to the subalgebra 
\[
\mathcal{S}:=\left\langle \Phi^{k}\left(\mathrm{Im}\iota\right)\mid k\in\mathbb{Z}\setminus\left\{ 0\right\} \right\rangle .
\]
\end{thm}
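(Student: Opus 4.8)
The plan is to show that every algebra generator of $H^*(B_{\mathrm{com}}U(n);\mathbb{R})$ lies in $\mathcal{S}$. By \cite{key-11} the multisymmetric power sums $P_{a,b}(n)=\sum_{i=1}^n x_i^a y_i^b$ with $1\le a+b\le n$ generate $\left(\mathbb{R}\left[x_1,\ldots,x_n\right]\otimes\mathbb{R}\left[y_1,\ldots,y_n\right]\right)^{S_n}$, so their classes modulo $J$ generate $H^*(B_{\mathrm{com}}U(n);\mathbb{R})$. Thus it suffices to prove that each $P_{a,b}(n)$ lies in $\mathcal{S}$ modulo $J$. The case $b=0$ is immediate: then $P_{a,0}(n)=\sum_i x_i^a$ is a symmetric polynomial of positive degree in the $x_i$, so it belongs to $J$ and its class is $0\in\mathcal{S}$. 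Hence only the generators with $b\ge 1$ require work.

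First I would fix $m$ with $1\le m\le n$ and expand the image under $\iota$ of the power sum $p_m=z_1^m+\cdots+z_n^m$. Since $\iota$ sends $z_i\mapsto x_i+y_i$, the binomial theorem gives
\[
\iota(p_m)=\sum_{i=1}^n (x_i+y_i)^m=\sum_{b=0}^m\binom{m}{b}P_{m-b,b}(n).
\]
Reducing modulo $J$ annihilates the $b=0$ term $P_{m,0}(n)$, so
\[
\iota(p_m)\equiv\sum_{b=1}^m\binom{m}{b}P_{m-b,b}(n)\pmod J.
\]
Because $p_m$ is symmetric in the $z_i$, the element $\iota(p_m)$ lies in $\mathrm{Im}\,\iota\subseteq\mathcal{S}$, and by the preceding lemma $\Phi^k(P_{a,b}(n))=k^b P_{a,b}(n)$. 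Applying the power maps therefore produces the family
\[
\Phi^k\bigl(\iota(p_m)\bigr)\equiv\sum_{b=1}^m\binom{m}{b}k^b\,P_{m-b,b}(n)\pmod J,\qquad k\in\mathbb{Z}\setminus\{0\},
\]
each member of which again belongs to $\mathcal{S}$.

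The decisive step is a Vandermonde argument recovering each individual $P_{m-b,b}(n)$ from these combinations. Choosing the $m$ distinct nonzero integers $k=1,2,\ldots,m$, I would regard the relations above as a linear system in the unknowns $P_{m-b,b}(n)$, $b=1,\ldots,m$, with coefficient matrix $\left(\binom{m}{b}k^b\right)_{k,b}$. This matrix factors as $\mathrm{diag}(k)\cdot V\cdot\mathrm{diag}\bigl(\binom{m}{b}\bigr)$, where $V=(k^{b-1})_{k,b}$ is a Vandermonde matrix on the distinct nodes $1,\ldots,m$; since all three factors are invertible, the system can be solved, exhibiting each $P_{m-b,b}(n)$ modulo $J$ as a real linear combination of the elements $\Phi^k(\iota(p_m))\in\mathcal{S}$. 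Letting $m$ range over $1,\ldots,n$ then places every generator with $b\ge1$ inside $\mathcal{S}$, and together with the $b=0$ case this yields $\mathcal{S}=H^*(B_{\mathrm{com}}U(n);\mathbb{R})$.

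The main obstacle is precisely this inversion: one must ensure that the power maps supply enough independent equations, which is exactly the nonvanishing of the Vandermonde determinant once $m$ distinct nonzero values of $k$ are chosen. This is what makes the worked cases $n=2$ (using $k=\pm1$) and $n=3$ (using $k=1,-1,2$) succeed, and the general claim is just the observation that the same determinant is nonzero in every degree. A minor point to verify is that reduction modulo $J$ is compatible with the power maps; this holds because $J$ is spanned by polynomials in the $x_i$ alone, which $\Phi^k$ fixes, so $\Phi^k$ descends to the quotient and the congruences above are legitimate.
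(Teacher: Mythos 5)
Your proposal is correct, and its skeleton coincides with the paper's: reduce via Vaccarino's theorem to showing each $P_{a,b}\left(n\right)$ lies in $\mathcal{S}$ modulo $J$, expand $\iota\left(p_{m}\right)\equiv\sum_{b=1}^{m}\binom{m}{b}P_{m-b,b}\left(n\right)$ modulo $J$, and exploit the eigenvalue identity $\Phi^{k}\left(P_{a,b}\right)=k^{b}P_{a,b}$. Where you differ is the extraction step. The paper runs an explicit elimination recursion $A_{0}:=\iota\left(p_{m}\right)$, $A_{k}:=\Phi^{k+1}\left(A_{k-1}\right)-\left(k+1\right)^{k}A_{k-1}$, which successively annihilates the terms with small $j$ and ends with $A_{m-1}$ equal to a nonzero multiple of $P_{0,m}\left(n\right)$; it then subtracts and repeats a backwards recursion to recover the remaining $P_{a,b}$. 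You instead observe that the elements $\Phi^{k}\left(\iota\left(p_{m}\right)\right)$, $k=1,\ldots,m$, form a linear system in the unknowns $P_{m-b,b}\left(n\right)$ whose coefficient matrix $\left(\binom{m}{b}k^{b}\right)_{k,b}$ factors as a product of $\mathrm{diag}\left(k\right)$, the Vandermonde matrix $\left(k^{b-1}\right)_{k,b}$ on the distinct nodes $1,\ldots,m$, and $\mathrm{diag}\left(\binom{m}{b}\right)$, hence is invertible; solving recovers all the $P_{m-b,b}\left(n\right)$ at once. Your route is cleaner and in one respect more complete: the paper's concluding backwards recursion for the mixed terms $P_{a,b}$ with $a,b>0$ is only sketched, whereas your inversion treats all $b$ simultaneously and makes the nondegeneracy (the Vandermonde determinant) explicit. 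What the paper's recursion buys in exchange is an explicit algorithm with closed-form coefficients, for instance $P_{0,m}\left(n\right)=\left(\prod_{k=2}^{m}\left(k^{m}-k^{k-1}\right)\right)^{-1}A_{m-1}$, which is reused later in Section 5 to compute TC Chern classes concretely. One cosmetic correction: $J$ is not spanned as a vector space by polynomials in the $x_{i}$ alone; it is the ideal generated by the $S_{n}$-invariant polynomials of positive degree in the $x_{i}$. Your descent claim still holds, since $\Phi^{k}$ is an algebra homomorphism fixing those ideal generators, so $\Phi^{k}\left(J\right)\subseteq J$; and in fact the point is automatic in the paper's framework, because $\Phi^{k}$ is defined on $H^{*}\left(B_{\mathrm{com}}U\left(n\right),\mathbb{R}\right)$ itself, being induced by a self-map of the space $B_{\mathrm{com}}U\left(n\right)$.
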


\begin{proof}
For this proof we will be working modulo $J$. Also, for an arbitrary
$n$ consider a fixed $m\in\left\{ 1,2,\ldots,n\right\} $. Now take
\[
p_{m}:=z_{1}^{m}+z_{2}^{m}+\cdots+z_{n}^{m}.
\]
An easy reordering gives us 
\begin{align}
\iota\left(p_{m}\right) & =\left(\sum_{i=1}^{n}\left(x_{i}+y_{i}\right)^{m}\right)=\sum_{i=1}^{n}\sum_{j=0}^{m}\binom{m}{j}x_{i}^{m-j}y_{i}^{j}\nonumber \\
 & =\sum_{j=0}^{m}\binom{m}{j}P_{m-j,j}\left(n\right)=\sum_{j=1}^{m}\binom{m}{j}P_{m-j,j}\left(n\right),\label{eq:principal sum}
\end{align}
where the last equality holds because we are working modulo $J$.
From this point we will use the power maps $\Phi^{k}$ to obtain the
various $P_{m-j},_{j}\left(n\right)$. First we use the following
recursion to get first $P_{0,m}\left(n\right)$ from \ref{eq:principal sum}:
Let $A_{0}:=\iota\left(p_{m}\right)$, 
\[
A_{1}:=\Phi^{2}\left(A_{0}\right)-2A_{0}=\sum_{j=2}^{m}\left(2^{j}-2\right)\binom{m}{j}P_{m-j,j}\left(n\right)
\]
and
\[
A_{2}:=\Phi^{3}\left(A_{1}\right)-3^{2}A_{1}=\sum_{j=3}^{m}\left(2^{j}-2\right)\left(3^{j}-3^{2}\right)\binom{m}{j}P_{m-j,j}\left(n\right).
\]
In general for $1\leq k\leq m-1$ we define 
\[
A_{k}:=\Phi^{k+1}\left(A_{k-1}\right)-\left(k+1\right)^{k}A_{k-1}.
\]
Notice that every $A_{k}$ has non zero coefficients only for $P_{m-j,j}\left(n\right)$
for $k+1\leq j\leq m$. Since $A_{0}\in\mathcal{S}$ by definition
and every $A_{k}$ is defined in terms of the power maps and $A_{k-1}$,
induction implies that $A_{k}\in\mathcal{S}$ for every $1\leq k\leq m-1$.
Some easy calculations allow us to obtain that 
\[
P_{0,m}\left(n\right)=\left(\prod_{k=2}^{m}\left(k^{m}-k^{k-1}\right)\right)^{-1}A_{m-1}\in\mathcal{S}.
\]

And thus we obtain that 
\[
\iota\left(p_{m}\right)-P_{0,m}\left(n\right)=\sum_{j=1}^{m-1}\binom{m}{j}P_{m-j,j}\left(n\right)\in\mathcal{S}.
\]
Then we can apply a new recursion to conclude that $P_{1,m-1}\left(n\right)\in\mathcal{S}$.
By continuing with this backwards recursion we obtain that $P_{a,b}\left(n\right)\in\mathcal{S}$
for all positive $a,b$ such that $a+b=m$. Since we picked $m\in\left\{ 1,2,\ldots,n\right\} $
arbitrarily, this finishes the proof. 
\end{proof}
\medskip{}

\subsection{Generators of $H^{*}\left(B_{com}SU\left(n\right);\mathbb{R}\right)$:}

To obtain that 
\[
H^{*}\left(B_{com}SU\left(n\right),\mathbb{R}\right)=\left\langle \Phi^{k}\left(\mathrm{Im}\iota\right)\mid k\in\mathbb{Z}\setminus\left\{ 0\right\} \right\rangle ,
\]
we use a different presentation of $H^{*}\left(BT,\mathbb{R}\right)$.
A maximal torus of $SU\left(n\right)$ is the set of diagonal matrices
with entries in $S^{1}\subseteq\mathbb{C}$, such that their product
equals one. Under such presentation it is routine to show that 
\[
H^{*}\left(BT,\mathbb{R}\right)\cong\left(\mathbb{R}\left[z_{1},\ldots,z_{n}\right]/\left\langle z_{1}+\cdots+z_{n}\right\rangle \right)
\]
where the Weyl group is then $S_{n}$ acting by permutation. This
implies that 
\[
H^{*}\left(BSU\left(n\right),\mathbb{R}\right)\cong\left(\mathbb{R}\left[z_{1},\ldots,z_{n}\right]/\left\langle z_{1}+\cdots+z_{n}\right\rangle \right)^{S_{n}},
\]
but since $p_{1}:=z_{1}+\cdots+z_{n}$ is already invariant, the previous
ring is isomorphic to 
\[
H^{*}\left(BSU\left(n\right),\mathbb{R}\right)\cong\mathbb{R}\left[z_{1},\ldots,z_{n}\right]^{S_{n}}/\left\langle z_{1}+\cdots+z_{n}\right\rangle .
\]
Since $\mathbb{R}\left[z_{1},\ldots,z_{n}\right]^{S_{n}}$ is itself
a polynomial algebra on $p_{i}=z_{1}^{i}+\cdots+z_{n}^{i}$ for $1\leq i\leq n$,
(see \cite{key-12}, Chapter 3.5: Chevalley's Theorem), we finally
get that 
\[
H^{*}\left(BSU\left(n\right),\mathbb{R}\right)\cong\mathbb{R}\left[p_{1},\ldots,p_{n}\right]/\left\langle p_{1}\right\rangle \cong\mathbb{R}\left[p_{2},\ldots,p_{n}\right].
\]
We will use this to conclude the following

\medskip{}

\begin{thm}
The real cohomology of $B_{\mathrm{com}}SU\left(n\right)$ can be
given by
\[
H^{\text{*}}\left(B_{com}SU\left(n\right),\mathbb{R}\right)\cong\left(\mathbb{R}\left[x_{1},\ldots,x_{n}\right]\otimes\mathbb{R}\left[y_{1},\ldots,y_{n}\right]\right)^{S_{n}}/\tilde{J},
\]
where $\tilde{J}$ is the ideal generated by $x_{1}^{i}+\cdots+x_{n}^{i}$,
$1\leq i\leq n$ and $y_{1}^{1}+\cdots+y_{n}^{1}$. 
\end{thm}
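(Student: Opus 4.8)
The plan is to start from the general presentation $H^{*}\left(B_{\mathrm{com}}SU(n)\right)\cong\left(H^{*}(BT)\otimes H^{*}(BT)\right)^{S_{n}}/J$ recalled in Section 3, feed in the $SU(n)$-specific description $H^{*}(BT)\cong\mathbb{R}[z_{1},\ldots,z_{n}]/\langle p_{1}\rangle$ just obtained (with $p_{i}:=z_{1}^{i}+\cdots+z_{n}^{i}$ and $W=S_{n}$ acting by permutation), and then reconcile the two quotient operations. Writing $x$-variables for the first tensor factor and $y$-variables for the second, the Künneth identification unfolds the tensor product as
\[
H^{*}(BT)\otimes H^{*}(BT)\cong\mathbb{R}[x_{1},\ldots,x_{n},y_{1},\ldots,y_{n}]/\langle p_{1}(x),p_{1}(y)\rangle=:R/I,
\]
where $R=\mathbb{R}[x_{1},\ldots,x_{n}]\otimes\mathbb{R}[y_{1},\ldots,y_{n}]$, $I=\langle p_{1}(x),p_{1}(y)\rangle$, and $S_{n}$ acts diagonally. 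Since $\mathbb{R}[x]^{S_{n}}=\mathbb{R}[p_{1}(x),\ldots,p_{n}(x)]$ by the fundamental theorem of symmetric functions (power sums generate in characteristic zero), the positive-degree $S_{n}$-invariants of the first factor $\mathbb{R}[x]/\langle p_{1}(x)\rangle$ form the ideal generated by the images of $p_{2}(x),\ldots,p_{n}(x)$; hence $J$ is generated in $(R/I)^{S_{n}}$ by these classes.

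The key step is to commute \emph{taking} $S_{n}$-\emph{invariants} past \emph{quotienting by the} $S_{n}$-\emph{stable ideal} $I$. Because we work over $\mathbb{R}$ and $S_{n}$ is finite, the averaging (Reynolds) operator $\mathcal{R}\colon R\to R^{S_{n}}$ splits the inclusion $R^{S_{n}}\hookrightarrow R$, so the invariants functor is exact; applying it to $0\to I\to R\to R/I\to 0$ yields
\[
(R/I)^{S_{n}}\cong R^{S_{n}}/\left(I\cap R^{S_{n}}\right).
\]
Moreover $I\cap R^{S_{n}}$ is exactly the ideal of $R^{S_{n}}$ generated by $p_{1}(x)$ and $p_{1}(y)$: any invariant $f=p_{1}(x)a+p_{1}(y)b\in I$ satisfies $f=\mathcal{R}(f)=p_{1}(x)\mathcal{R}(a)+p_{1}(y)\mathcal{R}(b)$, since $\mathcal{R}$ is $R^{S_{n}}$-linear and $p_{1}(x),p_{1}(y)$ are already invariant, so $\mathcal{R}(a),\mathcal{R}(b)\in R^{S_{n}}$. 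Consequently
\[
\left(H^{*}(BT)\otimes H^{*}(BT)\right)^{S_{n}}\cong\left(\mathbb{R}[x]\otimes\mathbb{R}[y]\right)^{S_{n}}/\langle p_{1}(x),p_{1}(y)\rangle.
\]

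Finally I would combine the two layers of quotients: quotienting the right-hand side further by $J=\langle p_{2}(x),\ldots,p_{n}(x)\rangle$ and merging the two ideals gives
\[
H^{*}\left(B_{\mathrm{com}}SU(n)\right)\cong\left(\mathbb{R}[x_{1},\ldots,x_{n}]\otimes\mathbb{R}[y_{1},\ldots,y_{n}]\right)^{S_{n}}/\langle p_{1}(x),\ldots,p_{n}(x),p_{1}(y)\rangle,
\]
which is precisely the statement, since $\tilde{J}$ is generated by $x_{1}^{i}+\cdots+x_{n}^{i}$ for $1\le i\le n$ together with $y_{1}+\cdots+y_{n}$. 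The only delicate point, and the step I expect to be the main obstacle, is justifying the commutation of invariants with the quotient together with the identification $I\cap R^{S_{n}}=\langle p_{1}(x),p_{1}(y)\rangle_{R^{S_{n}}}$; everything else is bookkeeping about which power sums survive in each factor. The asymmetry between the $x$ and $y$ variables in $\tilde{J}$ simply reflects that $J$ imposes relations coming only from invariants of the first factor, while the second factor contributes just the single relation $p_{1}(y)$ already built into $H^{*}(BT)$.
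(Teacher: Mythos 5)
Your proposal is correct and follows essentially the same route as the paper: both start from the presentation $\left(H^{*}(BT)\otimes H^{*}(BT)\right)^{S_{n}}/J$, substitute $H^{*}(BT)\cong\mathbb{R}\left[z_{1},\ldots,z_{n}\right]/\left\langle z_{1}+\cdots+z_{n}\right\rangle$, and commute taking $S_{n}$-invariants with the quotient so that the two layers of relations merge into $\tilde{J}$. The only difference is one of detail: where the paper asserts that the induced map on invariants is surjective (``since $p$ is surjective, and the action is diagonal'') and that its kernel ``is easy to see'' to be $\tilde{J}$, your Reynolds-operator argument supplies exactly the justification for those two claims.
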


\begin{proof}
We saw in Theorem \ref{thm: Cohomology and Power maps} that 
\[
H^{\text{*}}\left(B_{com}SU\left(n\right),\mathbb{R}\right)\cong\left(H^{*}\left(BT\right)\otimes H^{*}\left(BT\right)\right)^{S_{n}}/J,
\]
where $J$ is the ideal generated by the $S_{n}$-invariants on the
first component. Now, for convenience, let us call 
\[
\mathbb{R}\left[\mathbf{x}\right]:=\mathbb{R}\left[x_{1},\ldots,x_{n}\right],
\]
\[
\mathbb{R}\left[\mathbf{y}\right]:=\mathbb{R}\left[y_{1},\ldots,y_{n}\right],
\]
\[
f\left(\mathbf{x}\right)=x_{1}+\cdots+x_{n}
\]
and
\[
f\left(\mathbf{y}\right)=y_{1}+\cdots+y_{n}
\]
 The previous reasoning then gives us
\[
H^{\text{*}}\left(B_{com}SU\left(n\right),\mathbb{R}\right)\cong\left(\mathbb{R}\left[\mathbf{x}\right]/\left\langle f\left(\mathbf{x}\right)\right\rangle \otimes\mathbb{R}\left[\mathbf{y}\right]/\left\langle f\left(\mathbf{y}\right)\right\rangle \right)^{S_{n}}/J.
\]
Notice that this is well defined since the $S_{n}$-invariance of
$x_{1}+\cdots+x_{2}$ and $y_{1}+\cdots+y_{2}$ allow us to have a
well define action of $S_{n}$ on 
\[
R:=\mathbb{R}\left[\mathbf{x}\right]/\left\langle f\left(\mathbf{x}\right)\right\rangle \otimes\mathbb{R}\left[\mathbf{y}\right]/\left\langle f\left(\mathbf{y}\right)\right\rangle .
\]
Consider first the map 
\[
p:\mathbb{R}\left[\mathbf{x}\right]\otimes\mathbb{R}\left[\mathbf{y}\right]\rightarrow R,
\]
which is induced by the projection

\[
\mathbb{R}\left[\mathbf{x}\right]\times\mathbb{R}\left[\mathbf{y}\right]\rightarrow\mathbb{R}\left[\mathbf{x}\right]/\left\langle f\left(\mathbf{x}\right)\right\rangle \times\mathbb{R}\left[\mathbf{y}\right]/\left\langle f\left(\mathbf{y}\right)\right\rangle .
\]
The map $p$ is naturally $S_{n}$-equivariant, thus it induces a
map 
\[
\tilde{p}:\left(\mathbb{R}\left[x_{1},\ldots,x_{n}\right]\otimes\mathbb{R}\left[y_{1},\ldots,y_{n}\right]\right)^{S_{n}}\rightarrow R^{S_{n}}.
\]
Also, since $p$ is surjective, and the action is diagonal, we have
that $\tilde{p}$ is also onto. We can further consider the composition
with the quotient by $J$ to obtain a surjective map 
\[
q:\left(\mathbb{R}\left[x_{1},\ldots,x_{n}\right]\otimes\mathbb{R}\left[y_{1},\ldots,y_{n}\right]\right)^{S_{n}}\rightarrow\left(R\right)^{S_{n}}/J.
\]
It is easy to see that the kernel of this map is what we called $\tilde{J}$,
so the result follows. 
\end{proof}
Even further, since the map 
\begin{align*}
\mathbb{R}\left[z_{1},\ldots,z_{n}\right] & \rightarrow\mathbb{R}\left[x_{1},\ldots,x_{n}\right]\otimes\mathbb{R}\left[y_{1},\ldots,y_{n}\right]\\
z_{i} & \mapsto x_{i}+y_{i}.
\end{align*}
induces the map $\iota:H^{\text{*}}\left(BSU\left(n\right),\mathbb{R}\right)\rightarrow H^{\text{*}}\left(B_{com}SU\left(n\right),\mathbb{R}\right)$,
we still have the same characterization under the identifications
given above. That is, $\iota$ can be seen as the map 
\[
\left(\mathbb{R}\left[z_{1},\ldots,z_{n}\right]/\left\langle z_{1}+\cdots+z_{n}\right\rangle \right)^{S_{n}}\rightarrow\left(\mathbb{R}\left[x_{1},\ldots,x_{n}\right]\otimes\mathbb{R}\left[y_{1},\ldots,y_{n}\right]\right)^{S_{n}}/\tilde{J}
\]
induce by $z_{i}\mapsto x_{i}+y_{i}$. The $k$-th power map on 
\[
\left(\mathbb{R}\left[x_{1},\ldots,x_{n}\right]\otimes\mathbb{R}\left[y_{1},\ldots,y_{n}\right]\right)^{S_{n}}/\tilde{J}
\]
 is also still induce by the assignment $x_{i}\mapsto x_{i}$ and
$y_{i}\mapsto ky_{i}$. Thus, with slight changes we can still apply
the arguments given in the proof of Theorem \ref{thm:Generators HBUn},
to obtain the main result. 

\medskip{}

\begin{thm}
The algebra $H^{*}\left(B_{com}SU\left(n\right);\mathbb{R}\right)$
is equal to the subalgebra 
\[
\mathcal{S}:=\left\langle \Phi^{k}\left(\mathrm{Im}\iota\right)\mid k\in\mathbb{Z}\setminus\left\{ 0\right\} \right\rangle ,
\]
where $\Phi^{k}$ is the $k$-th power map. 
\end{thm}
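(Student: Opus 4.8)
The plan is to adapt the strategy from the proof of Theorem \ref{thm:Generators HBUn} to the $SU(n)$ setting, taking advantage of the fact that all three key ingredients---the presentation of $H^*\left(B_{com}SU\left(n\right),\mathbb{R}\right)$, the map $\iota$, and the power maps $\Phi^k$---retain exactly the same form as in the $U(n)$ case, with the single difference that we now quotient by the slightly larger ideal $\tilde{J}$. First I would introduce the multisymmetric power polynomials $P_{a,b}\left(n\right):=\sum_{i=1}^{n}x_i^a y_i^b$, which (by the same result of \cite{key-11}) generate $\left(\mathbb{R}\left[\mathbf{x}\right]\otimes\mathbb{R}\left[\mathbf{y}\right]\right)^{S_n}$ and hence, modulo $\tilde{J}$, generate the target algebra. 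The goal reduces to showing each $P_{a,b}\left(n\right)$ lies in $\mathcal{S}$ modulo $\tilde{J}$.

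The engine of the argument is the identity, valid modulo $\tilde{J}$,
\[
\iota\left(p_m\right)=\sum_{j=0}^{m}\binom{m}{j}P_{m-j,j}\left(n\right),
\]
together with the eigenvalue behavior $\Phi^k\left(P_{a,b}\left(n\right)\right)=k^b P_{a,b}\left(n\right)$ established earlier. Since $\Phi^k$ acts diagonally on the $P_{a,b}$ with distinct eigenvalues $k^j$ for distinct values of $j=b$, one can run the same telescoping recursion $A_k:=\Phi^{k+1}\left(A_{k-1}\right)-\left(k+1\right)^k A_{k-1}$ to peel off the top-degree term $P_{0,m}\left(n\right)$, then work backwards to extract each $P_{m-j,j}\left(n\right)$ in turn. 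Because $\mathcal{S}$ is a subalgebra closed under the power maps (by the Lemma), every element produced by these operations remains in $\mathcal{S}$.

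The only substantive adjustments relative to the $U(n)$ proof concern the range of $m$ and the handling of the enlarged ideal. In the $U(n)$ case $p_m$ was invariant for $1\le m\le n$ and $\tilde{J}$ was generated only by the $x_i^i+\cdots$; now we must observe that $H^*\left(BSU\left(n\right),\mathbb{R}\right)\cong\mathbb{R}\left[p_2,\ldots,p_n\right]$, so the relevant power sums are $p_2,\ldots,p_n$ (the term $p_1$ being killed), and the ideal $\tilde{J}$ additionally kills $y_1+\cdots+y_n=P_{0,1}\left(n\right)$. I would therefore check that the recursion still produces all $P_{a,b}\left(n\right)$ with $a+b=m$ for $2\le m\le n$, noting that the only generator lost compared to the $U(n)$ situation is $P_{0,1}\left(n\right)$, which is zero modulo $\tilde{J}$ anyway and hence need not be recovered.

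The step I expect to require the most care is verifying that the diagonalization argument is unaffected by passing to the quotient by $\tilde{J}$: one must confirm that the relations imposed---the vanishing of $\sum x_i^i$ for $1\le i\le n$ and of $\sum y_i$---do not collapse distinct $P_{a,b}\left(n\right)$ together in a way that would obstruct solving the triangular system for the eigen-components, and that the binomial coefficients and eigenvalue products $\prod_{k=2}^m\left(k^m-k^{k-1}\right)$ remain nonzero so the inversions are legitimate. Since these relations lie in $\tilde{J}$ and are respected by both $\iota$ and $\Phi^k$, the recursion descends cleanly to the quotient, and the argument goes through essentially verbatim.
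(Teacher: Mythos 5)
Your proposal is correct and follows essentially the same route as the paper: the paper's own proof simply establishes the presentation $\left(\mathbb{R}\left[\mathbf{x}\right]\otimes\mathbb{R}\left[\mathbf{y}\right]\right)^{S_{n}}/\tilde{J}$, observes that $\iota$ and $\Phi^{k}$ retain their $U(n)$ descriptions, and then invokes the recursion of Theorem \ref{thm:Generators HBUn} ``with slight changes.'' The adjustments you spell out---working only with $p_{2},\ldots,p_{n}$ since $p_{1}$ dies in $H^{*}\left(BSU\left(n\right)\right)$, and noting that the now-unreachable generator $P_{0,1}\left(n\right)=y_{1}+\cdots+y_{n}$ is itself zero modulo $\tilde{J}$---are precisely the slight changes the paper has in mind.
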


\medskip{}

\subsection{Generators of $H^{*}\left(B_{com}\mathrm{Sp}\left(n\right);\mathbb{R}\right)$:}

In this section $\mathbb{Z}_{2}$ will denote the multiplicative group
$\left\{ -1,1\right\} $. 

The Weyl group $W$ of the simplectic group $\mathrm{Sp}\left(n\right)$
is isomorphic to the semidirect product $\mathbb{Z}_{2}^{n}\rtimes S_{n}$,
where $\sigma\in S_{n}$ acts on $\left(a_{1},\ldots,a_{n}\right)\in\mathbb{Z}_{2}^{n}$
by 
\[
\sigma\cdot\left(a_{1},\ldots,a_{n}\right)=\left(a_{\sigma\left(1\right)},\ldots,a_{\sigma\left(n\right)}\right).
\]
Under these identifications, if 
\[
f\in\mathbb{R}\left[x_{1},\ldots,x_{n}\right]\cong H^{*}(T)
\]
 and 
\[
g=\left(\left(a_{1},\ldots,a_{n}\right),\sigma\right)\in\mathbb{Z}_{2}^{n}\rtimes S_{n}
\]
 we have 
\[
g\cdot f\left(x_{1},\ldots,x_{n}\right)=f\left(a_{1}x_{\sigma\left(1\right)},\ldots,a_{n}x_{\sigma\left(n\right)}\right).
\]
Recall that 
\[
H^{*}\left(B_{com}\mathrm{Sp}\left(n\right);\mathbb{R}\right)\cong\left(\mathbb{R}\left[x_{1},\ldots,x_{n}\right]\otimes\mathbb{R}\left[y_{1},\ldots,y_{n}\right]\right)^{W}/J
\]
where $W$ acts diagonally: for $n\in W$ and $p\left(x\right)\otimes q\left(y\right)\in\mathbb{R}\left[x_{1},\ldots,x_{n}\right]\otimes\mathbb{R}\left[y_{1},\ldots,y_{n}\right]$
we have
\[
n\cdot\left(p\left(x\right)\otimes q\left(y\right)\right):=\left(n\cdot p\left(x\right)\right)\otimes\left(n\cdot q\left(y\right)\right).
\]
$J$ is the ideal generated by the symmetric polynomials on the variables
$x_{i}^{2}$. For brevity, let us call $R:=\left(\mathbb{R}\left[x_{1},\ldots,x_{n}\right]\otimes\mathbb{R}\left[y_{1},\ldots,y_{n}\right]\right)^{W}$
the \textbf{signed multisymmetric polynomials}.

Once again we want to see that $\mathcal{S}:=\left\langle \Phi^{k}\left(\mathrm{Im}\iota\right)\mid k\in\mathbb{Z}\setminus\left\{ 0\right\} \right\rangle $
is equal to all of $H^{*}\left(B_{com}\mathrm{Sp}\left(n\right);\mathbb{R}\right)$.
For this let us see first that the set 
\[
\left\{ P_{a,b}\left(n\right)\mid a,b\geq0\:\mathrm{and}\:a+b\in2\mathbb{Z}\right\} 
\]
generates all of the signed multisymmetric polynomials as an algebra.
This will allow us to use the same arguments used in the case of $U\left(n\right)$
to obtain that $\mathcal{S}=H^{*}\left(B_{com}\mathrm{Sp}\left(n\right);\mathbb{R}\right)$.
We need the following lemmas, where the first has a straightforward
proof. 

\medskip{}

\begin{lem}
Let $\mu:\mathbb{R}\left[x_{1},\ldots,x_{n}\right]\otimes\mathbb{R}\left[y_{1},\ldots,y_{n}\right]\rightarrow R$
be the operator defined as 
\[
\mu\left(f\right)=\frac{1}{\left|W\right|}\sum_{g\in W}g\cdot f.
\]
This is a well defined $\mathbb{R}$-linear map, where $\left|W\right|$
is the cardinality of the Weyl group. We call this operator the \textbf{symmetrization
operator}. 
\end{lem}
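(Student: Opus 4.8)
The plan is to verify the three assertions bundled into the statement: that $\mu$ is everywhere defined, that its image actually lands in the invariant subalgebra $R$, and that it is $\mathbb{R}$-linear. All three follow from the single structural fact that $W=\mathbb{Z}_{2}^{n}\rtimes S_{n}$ is a \emph{finite} group acting on $\mathbb{R}\left[x_{1},\ldots,x_{n}\right]\otimes\mathbb{R}\left[y_{1},\ldots,y_{n}\right]$ by $\mathbb{R}$-algebra automorphisms, together with the fact that we are working over $\mathbb{R}$, where $\left|W\right|$ is invertible.

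First I would record well-definedness as a map into the polynomial algebra. Since $W$ is finite, for any $f$ the expression $\sum_{g\in W}g\cdot f$ is a finite sum of polynomials and hence again a polynomial; because $\left|W\right|$ is a positive integer it is invertible in $\mathbb{R}$, so the scalar $\tfrac{1}{\left|W\right|}$ is available and $\mu\left(f\right)$ is a genuine element of $\mathbb{R}\left[x_{1},\ldots,x_{n}\right]\otimes\mathbb{R}\left[y_{1},\ldots,y_{n}\right]$.

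Next, to see that $\mu\left(f\right)\in R$, i.e.\ that it is $W$-invariant, I would fix $h\in W$ and compute
\[
h\cdot\mu\left(f\right)=\frac{1}{\left|W\right|}\sum_{g\in W}h\cdot\left(g\cdot f\right)=\frac{1}{\left|W\right|}\sum_{g\in W}\left(hg\right)\cdot f=\frac{1}{\left|W\right|}\sum_{g'\in W}g'\cdot f=\mu\left(f\right),
\]
where the first equality uses that $h\cdot$ is $\mathbb{R}$-linear, the second uses that $W$ acts by automorphisms so that $h\cdot\left(g\cdot f\right)=\left(hg\right)\cdot f$, and the third reindexes the sum via the bijection $g\mapsto hg$ (left translation is a bijection of $W$). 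Finally, $\mathbb{R}$-linearity of $\mu$ is immediate: each $g\in W$ acts $\mathbb{R}$-linearly, so $g\cdot\left(\alpha f_{1}+\beta f_{2}\right)=\alpha\left(g\cdot f_{1}\right)+\beta\left(g\cdot f_{2}\right)$, and summing over $g$ and dividing by $\left|W\right|$ yields $\mu\left(\alpha f_{1}+\beta f_{2}\right)=\alpha\mu\left(f_{1}\right)+\beta\mu\left(f_{2}\right)$.

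I do not expect any genuine obstacle: this is exactly the standard averaging (Reynolds) operator for a finite group in characteristic zero. The only points that deserve a moment of care are confirming that the diagonal $W$-action on the tensor product is by $\mathbb{R}$-linear automorphisms, so that the reindexing step $h\cdot\left(g\cdot f\right)=\left(hg\right)\cdot f$ is valid, and that $\tfrac{1}{\left|W\right|}$ exists, both of which hold precisely because $W$ is finite and the coefficients are real.
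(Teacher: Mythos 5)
Your proof is correct; it is the standard averaging (Reynolds) operator argument, which is precisely what the paper has in mind when it states this lemma "has a straightforward proof" and omits it entirely. The three checks you make — finiteness of the sum with $\frac{1}{\left|W\right|}$ invertible over $\mathbb{R}$, $W$-invariance of the image via the reindexing $g\mapsto hg$, and $\mathbb{R}$-linearity from linearity of each $g$-action — are exactly the content being taken for granted.
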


\medskip{}

\begin{lem}
If $f\in R$ and $h\in\mathbb{R}\left[x_{1},\ldots,x_{n}\right]\otimes\mathbb{R}\left[y_{1},\ldots,y_{n}\right]$,
then $\mu\left(f\right)=f$ and $\mu\left(fh\right)=f\cdot\mu\left(h\right)$.
\end{lem}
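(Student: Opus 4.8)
The plan is to treat $\mu$ as the Reynolds (averaging) operator for the finite group $W$ and to exploit the single structural fact that the diagonal $W$-action on $\mathbb{R}\left[x_{1},\ldots,x_{n}\right]\otimes\mathbb{R}\left[y_{1},\ldots,y_{n}\right]$ is an action by $\mathbb{R}$-algebra automorphisms. Both assertions are then immediate formal consequences. I would carry out the two claims in order, since the second uses the first.

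For the first claim I would argue that $f\in R$ means precisely $g\cdot f=f$ for every $g\in W$. Substituting this into the definition of $\mu$ gives
\[
\mu\left(f\right)=\frac{1}{\left|W\right|}\sum_{g\in W}g\cdot f=\frac{1}{\left|W\right|}\sum_{g\in W}f=\frac{\left|W\right|}{\left|W\right|}f=f,
\]
where the only input is that $W$ is finite so that the normalization $\frac{1}{\left|W\right|}$ is meaningful and the sum has exactly $\left|W\right|$ terms. This is essentially the statement that $\mu$ restricts to the identity on its image $R$, i.e. $\mu$ is an idempotent projection onto the invariants.

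For the second claim I would first record that, by construction, each $g\in W$ acts by permuting variables and, in the symplectic case, inserting signs; in particular each $g$ is a ring homomorphism, so $g\cdot\left(fh\right)=\left(g\cdot f\right)\left(g\cdot h\right)$ for arbitrary $f,h$. Combining this with the hypothesis $f\in R$, which gives $g\cdot f=f$ for all $g$, I would compute
\[
\mu\left(fh\right)=\frac{1}{\left|W\right|}\sum_{g\in W}g\cdot\left(fh\right)=\frac{1}{\left|W\right|}\sum_{g\in W}\left(g\cdot f\right)\left(g\cdot h\right)=f\cdot\frac{1}{\left|W\right|}\sum_{g\in W}\left(g\cdot h\right)=f\cdot\mu\left(h\right),
\]
where in the third equality $f$ is pulled out of the sum because it is independent of $g$. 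This establishes $R$-linearity of $\mu$, which is exactly the projection-formula content of the statement.

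I do not expect any genuine obstacle here: the whole argument reduces to the observation that the $W$-action is multiplicative together with $W$-invariance of $f$. The only point requiring a word of justification, rather than being purely formal, is that $g\cdot\left(fh\right)=\left(g\cdot f\right)\left(g\cdot h\right)$, which follows because the action is defined levelwise by substituting (signed, permuted) variables and such substitutions are algebra homomorphisms; once that is granted, both identities are one-line computations.
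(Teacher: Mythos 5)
Your proposal is correct and follows essentially the same argument as the paper: both compute $\mu(f)=f$ directly from $W$-invariance of $f$, and both use the multiplicativity of the action, $g\cdot(fh)=(g\cdot f)(g\cdot h)$, together with invariance of $f$ to pull $f$ out of the averaging sum. The only difference is cosmetic: you explicitly justify why each $g\in W$ acts as an algebra homomorphism, a fact the paper takes as given by definition.
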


\begin{proof}
Since $f$ is invariant, we have that $g\cdot f=f$ for all $g\in W$,
thus 
\[
\mu\left(f\right)=\frac{1}{\left|W\right|}\sum_{g\in W}g\cdot f=\frac{\left|W\right|}{\left|W\right|}f=f.
\]
Also, since by definition $g\cdot\left(fh\right)=\left(g\cdot f\right)\left(g\cdot h\right)$
for every $g\in W$ and $f,h\in\mathbb{R}\left[x_{1},\ldots,x_{n}\right]\otimes\mathbb{R}\left[y_{1},\ldots,y_{n}\right]$.
In particular if $f$ is invariant it follows that 
\[
\mu\left(fh\right)=\frac{1}{\left|W\right|}\sum_{g\in W}g\cdot\left(fh\right)=\frac{f}{\left|W\right|}\sum_{g\in W}g\cdot h=f\cdot\mu\left(h\right).
\]
\end{proof}
In order to prove our objective we need to analyze the summands (or
monomials) of a signed multisymmetric polynomials first. For this
consider sets of indices $I=\left(i_{1},\ldots,i_{n}\right),J=\left(j_{1},\ldots,j_{n}\right)\in\mathbb{N}^{n}$
(including zero as a natural number) and let us denote
\[
x^{I}y^{J}:=x_{1}^{i_{1}}x_{2}^{i_{2}}\cdots x_{n}^{i_{n}}y_{1}^{j_{1}}\cdots y_{n}^{j_{n}}.
\]

\medskip{}

\begin{defn}
We say a pair of multi indices $\left(I,J\right)\in\mathbb{N}^{n}\times\mathbb{N}^{n}$
is \textbf{odd} if there if $1\leq k\leq n$ such that $i_{k}+j_{k}$
is odd. Such a pair is \textbf{even} if it is not odd. 
\end{defn}

\medskip{}

\begin{lem}
If a pair of multi indices $\left(I,J\right)$ is odd, then $\mu\left(x^{I}y^{J}\right)=0$. 
\end{lem}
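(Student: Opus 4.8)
The plan is to exploit the sign-change subgroup $\mathbb{Z}_{2}^{n} \subset W = \mathbb{Z}_{2}^{n} \rtimes S_{n}$. Since $(I,J)$ is odd, fix an index $k$ with $i_{k} + j_{k}$ odd, and let $g_{k} \in W$ be the element with $-1$ in the $k$-th coordinate of the $\mathbb{Z}_{2}^{n}$-factor, all other coordinates $+1$, and trivial $S_{n}$-component. The first step is to compute its action on the monomial. Because $W$ acts diagonally, $g_{k}$ sends $x_{k} \mapsto -x_{k}$ and $y_{k} \mapsto -y_{k}$ while fixing every other variable, so
\[
g_{k} \cdot \left(x^{I} y^{J}\right) = (-1)^{i_{k}}(-1)^{j_{k}}\, x^{I} y^{J} = (-1)^{i_{k}+j_{k}}\, x^{I} y^{J} = -\,x^{I} y^{J},
\]
the sign being $-1$ precisely because $i_{k} + j_{k}$ is odd.

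I would then finish by one of two equivalent routes. The slick route uses that the symmetrization operator is unchanged by precomposing its argument with any group element: reindexing the defining sum via $g \mapsto g g_{k}$ shows $\mu\left(g_{k} \cdot v\right) = \mu(v)$ for every $v$. Applying this with $v = x^{I} y^{J}$ together with the computation above gives
\[
\mu\left(x^{I} y^{J}\right) = \mu\left(g_{k} \cdot x^{I} y^{J}\right) = \mu\left(-x^{I} y^{J}\right) = -\,\mu\left(x^{I} y^{J}\right),
\]
and since we work over $\mathbb{R}$ this forces $\mu\left(x^{I} y^{J}\right) = 0$. Equivalently, and more concretely, one can partition $W$ into the two-element left cosets $g\{e, g_{k}\} = \{g,\, g g_{k}\}$; within each coset the contributions $g\cdot\left(x^{I} y^{J}\right)$ and $(g g_{k})\cdot\left(x^{I} y^{J}\right) = g\cdot\left(-x^{I} y^{J}\right)$ cancel by $\mathbb{R}$-linearity of the action, so the full sum vanishes.

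There is essentially no real obstacle here; the argument is a standard averaging trick. The only points deserving a moment's attention are verifying that $g_{k}$ genuinely lies in $W$ (it sits inside the $\mathbb{Z}_{2}^{n}$ factor with identity permutation) and noting that the diagonal action flips the signs of $x_{k}$ and $y_{k}$ simultaneously, which is exactly why the relevant parity is that of $i_{k} + j_{k}$ rather than of $i_{k}$ or $j_{k}$ separately. I would likely present the coset version in the final write-up, as it is the most transparent.
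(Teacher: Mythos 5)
Your proof is correct and follows essentially the same approach as the paper: both exploit the order-two sign-flip element at position $k$ and cancel the averaging sum defining $\mu$ over the cosets of the two-element subgroup it generates in $W$. If anything, your left-coset pairing $\{g,\, g g_{k}\}$, which applies $g_{k}$ to the monomial \emph{before} $g$, is slightly more careful than the paper's right-coset pairing $\{g_{l},\, h_{k} g_{l}\}$: applying $h_{k}$ \emph{after} an element whose permutation part is $\sigma$ actually yields the sign $(-1)^{i_{\sigma^{-1}(k)}+j_{\sigma^{-1}(k)}}$ rather than the $(-1)^{i_{k}+j_{k}}$ stated in the paper, an imprecision that your order of operations avoids entirely.
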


\begin{proof}
Let $\left(I,J\right)=\left(\left(i_{1},\ldots,i_{n}\right),\left(j_{1},\ldots,j_{n}\right)\right)$
and let's assume $i_{k}+j_{k}$ is odd. Let 
\[
h_{k}:=\left(\left(1,\ldots,1,\underbrace{-1}_{k-\mathrm{position}},1,\ldots,1\right),e\right)\in W,
\]
where $e$ is the identity permutation. Denote by $H\subseteq W$
the subgroup generated by $h_{k}$ and the partition by right cosets
$\left\{ Hg_{1},\ldots,Hg_{m}\right\} $ of $W.$ Since $h_{k}$ has
order 2 
\[
W=\left\{ g_{1},\ldots,g_{m}\right\} \cup\left\{ h_{k}g_{1},\ldots,h_{k}g_{m}\right\} 
\]
and thus
\[
\mu\left(x^{I}y^{J}\right)=\frac{1}{\left|W\right|}\sum_{l=1}^{m}\left(g_{l}x^{I}y^{J}+h_{k}\left(g_{l}x^{I}y^{J}\right)\right).
\]
Notice that in general if $g=\left(\left(a_{1},\ldots,a_{n}\right),\sigma\right)$,
then since $i_{k}+j_{k}$ is odd we get

\begin{align*}
h_{k}\left(g\cdot x^{I}y^{J}\right) & =h_{k}\left(a_{1}^{i_{1}+j_{1}}\cdots a_{n}^{i_{n}+j_{n}}x_{\sigma\left(1\right)}^{i_{1}}\cdots x_{\sigma\left(n\right)}^{i_{n}}y_{\sigma\left(1\right)}^{j_{1}}\cdots y_{\sigma\left(n\right)}^{j_{n}}\right)\\
 & =\left(-1\right)^{i_{k}+j_{k}}a_{1}^{i_{1}+j_{1}}\cdots a_{n}^{i_{n}+j_{n}}x_{\sigma\left(1\right)}^{i_{1}}\cdots x_{\sigma\left(n\right)}^{i_{n}}y_{\sigma\left(1\right)}^{j_{1}}\cdots y_{\sigma\left(n\right)}^{j_{n}}\\
 & =-a_{1}^{i_{1}+j_{1}}\cdots a_{n}^{i_{n}+j_{n}}x_{\sigma\left(1\right)}^{i_{1}}\cdots x_{\sigma\left(n\right)}^{i_{n}}y_{\sigma\left(1\right)}^{j_{1}}\cdots y_{\sigma\left(n\right)}^{j_{n}}\\
 & =-g\cdot x^{I}y^{J}.
\end{align*}
This implies that 
\[
\mu\left(x^{I}y^{J}\right)=\frac{1}{\left|W\right|}\sum_{l=1}^{m}\left(g_{l}x^{I}y^{J}-g_{l}x^{I}y^{J}\right)=0.
\]
\end{proof}
\begin{thm}
If a polynomial is signed multisymmetric then its monomials have all
even multi indices. 
\end{thm}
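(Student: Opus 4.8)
The plan is to read the $W$-invariance of $f$ directly off its monomial expansion, exploiting that the pure sign-flip elements of $W$ act diagonally on monomials. I would write a signed multisymmetric polynomial as
\[
f=\sum_{(I,J)}c_{I,J}\,x^{I}y^{J},\qquad c_{I,J}\in\mathbb{R},
\]
and argue by contradiction: suppose some $c_{I,J}\neq 0$ for an odd pair $(I,J)$, chosen so that $i_{k}+j_{k}$ is odd for a particular index $k$.

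For that $k$ I would use the element $h_{k}=((1,\ldots,-1,\ldots,1),e)\in W$ with the $-1$ in the $k$-th slot, exactly the one appearing in the previous lemma. The key point is that, because the permutation part of $h_{k}$ is trivial, $h_{k}$ permutes no variables and only rescales, so it acts diagonally on the monomial basis: $h_{k}\cdot x^{I'}y^{J'}=(-1)^{i'_{k}+j'_{k}}x^{I'}y^{J'}$ for every pair $(I',J')$. Each monomial is thus an eigenvector of $h_{k}$ with eigenvalue $\pm1$ determined by the parity of $i'_{k}+j'_{k}$. Since $f\in R$ gives $h_{k}\cdot f=f$, comparing the coefficient of $x^{I}y^{J}$ on the two sides yields $c_{I,J}=(-1)^{i_{k}+j_{k}}c_{I,J}=-c_{I,J}$, so $c_{I,J}=0$, contradicting the choice. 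Letting $k$ range over $\{1,\ldots,n\}$ then forces every coefficient attached to an odd pair to vanish, which is exactly the assertion.

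As an alternative I would phrase the same idea through the symmetrization operator: from $f=\mu(f)=\sum_{(I,J)}c_{I,J}\mu(x^{I}y^{J})$ the preceding lemma deletes every odd-pair term at once. The only extra thing to verify on this route is that $\mu(x^{I}y^{J})$ for an even pair stays supported on even-pair monomials, which holds because a general $g=((a_{1},\ldots,a_{n}),\sigma)$ sends $x^{I}y^{J}$ to $\pm$ a monomial whose parity vector $(i_{\ell}+j_{\ell}\bmod 2)_{\ell}$ is merely permuted by $\sigma$. I expect this parity-preservation bookkeeping to be the only mildly delicate point, and since the diagonal eigenvalue argument above avoids it completely, I would take that as the main line of proof.
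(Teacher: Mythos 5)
Your main argument is correct, and it takes a genuinely more direct route than the paper's own proof. The paper proves the theorem by symmetrizing: it writes $f=c_{0}+\sum_{k}c_{k}x^{I_{k}}y^{J_{k}}$ with nonzero coefficients, uses $f=\mu\left(f\right)$, invokes the preceding lemma to delete every odd-pair term from $\sum_{k}c_{k}\mu\left(x^{I_{k}}y^{J_{k}}\right)$, and then compares the two expressions by uniqueness of coefficients --- in other words, precisely the route you describe as your ``alternative.'' Your primary argument instead exploits that each sign-flip element $h_{k}=\left(\left(1,\ldots,-1,\ldots,1\right),e\right)$ acts diagonally on the monomial basis with eigenvalue $\left(-1\right)^{i_{k}+j_{k}}$, so $h_{k}$-invariance alone forces $c_{I,J}=\left(-1\right)^{i_{k}+j_{k}}c_{I,J}=-c_{I,J}=0$ for every pair that is odd in position $k$; no symmetrization operator, no coset decomposition, and no averaging is needed. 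Two points of comparison are worth making. First, you correctly identify that the $\mu$-route requires the parity-preservation fact --- that $\mu\left(x^{I}y^{J}\right)$ for an even pair is supported on even-pair monomials, since a general $g=\left(\left(a_{1},\ldots,a_{n}\right),\sigma\right)$ only permutes the parity vector $\left(i_{\ell}+j_{\ell}\bmod2\right)_{\ell}$ by $\sigma$; the paper's proof in fact uses this implicitly in the step asserting that $\sum_{k}c_{k}\mu\left(x^{I_{k}}y^{J_{k}}\right)$ ``must have only monomials with an even set of multi indices,'' so your bookkeeping remark fills a small gap in the published argument rather than creating one. Second, what the paper's route buys is Corollary \ref{cor: Signed symmetric expresion}: the identity $f=\mu\left(f\right)=c_{0}+\sum_{k}c_{k}\mu\left(x^{I_{k}}y^{J_{k}}\right)$ with only even pairs surviving is exactly the normal form used afterwards to show the invariant ring is generated by the $P_{a,b}$, whereas your eigenvalue argument proves the theorem but would still need the $f=\mu\left(f\right)$ computation appended to recover that corollary. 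As a self-contained proof of the stated theorem, yours is the cleaner of the two.
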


\begin{proof}
An element $f\in\mathbb{R}\left[x_{1},\ldots,x_{n}\right]\otimes\mathbb{R}\left[y_{1},\ldots,y_{n}\right]$
can be uniquely written as
\[
f=c_{0}+\sum_{k=1}^{m}c_{k}x^{I_{k}}y^{J_{k}}.
\]
Where $c_{0}\in\mathbb{R}$ and for $k>0$, $c_{k}\in\mathbb{R}\setminus\left\{ 0\right\} $,
$I_{k}$ and $J_{k}$ are multi indices of $n$ variables, not all
of them zero. If $f$ is signed multisymmetric, 
\[
f=\mu\left(f\right)=c_{0}+\sum_{k=1}^{m}c_{k}\mu\left(x^{I_{k}}y^{J_{k}}\right).
\]
These two last expressions for $f$ imply that
\begin{equation}
\sum_{k=1}^{m}c_{k}x^{I_{k}}y^{J_{k}}=\sum_{k=1}^{m}c_{k}\mu\left(x^{I_{k}}y^{J_{k}}\right).\label{eq:two expressions}
\end{equation}
 But by the previous lemma, we know that if $\left(I_{t},J_{t}\right)$
is odd for a given $t$, then $\mu\left(x^{I_{t}}y^{J_{t}}\right)=0$.
Since $\mu\left(x^{I_{k}}y^{J_{k}}\right)$ is itself a sum of monomials,
the expression 
\[
\sum_{k=1}^{m}c_{k}\mu\left(x^{I_{k}}y^{J_{k}}\right)
\]
 must have only monomials with an even set of multi indices. Since
all the coefficients in 
\[
\sum_{k=1}^{m}c_{k}x^{I_{k}}y^{J_{k}}
\]
are non zero, the last equality and the uniqueness of the expression
for non zero coefficients of a polynomial, allow us to conclude that
$\left(I_{k},J_{k}\right)$ is even for every $1\leq k\leq m$.
\end{proof}
In particular this proof allows us to obtain 

\medskip{}

\begin{cor}
\label{cor: Signed symmetric expresion}Every signed multisymmetric
polynomial can be written in the form 
\[
f=c_{0}+\sum_{k=1}^{m}c_{k}\mu\left(x^{I_{k}}y^{J_{k}}\right),
\]
 where $\left(I_{k},J_{k}\right)$ is even for every $1\leq k\leq m$.
\end{cor}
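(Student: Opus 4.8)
The plan is to read this off directly from the monomial analysis already performed for the symmetrization operator $\mu$, so the argument is essentially two lines. First I would take an arbitrary signed multisymmetric polynomial $f \in R$ and expand it uniquely in monomials as $f = c_0 + \sum_{k=1}^m c_k x^{I_k} y^{J_k}$, with $c_0 \in \mathbb{R}$ and each $c_k \neq 0$ for $k \geq 1$. The key input is the preceding lemma stating that $\mu$ fixes every invariant, which gives $\mu(f) = f$ precisely because $f \in R$.

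Next I would apply $\mu$ termwise, using its $\mathbb{R}$-linearity together with the fact that constants are $W$-invariant (so that $\mu(c_0) = c_0$), obtaining $f = \mu(f) = c_0 + \sum_{k=1}^m c_k \mu(x^{I_k} y^{J_k})$. At this point the odd-index lemma does the remaining work: any summand whose pair $(I_k, J_k)$ is odd satisfies $\mu(x^{I_k} y^{J_k}) = 0$ and therefore contributes nothing, so it may be discarded without changing the value of the sum. Deleting exactly these terms leaves $f$ expressed as $c_0 + \sum_k c_k \mu(x^{I_k} y^{J_k})$, where the index $k$ now ranges only over the even pairs $(I_k, J_k)$, which is the asserted form. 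Equivalently, one may observe that the theorem just established already guarantees that the monomials actually occurring in $f$ are even, so no genuine discarding is needed at all.

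I do not expect any real obstacle here: the statement is a repackaging of the computation carried out inside the preceding theorem rather than a new result. The only point requiring a moment of care is the justification that deleting the odd-index summands preserves the equality, and this is immediate since each deleted term was identically zero. In the write-up I would simply cite the two lemmas, namely the invariance of $\mu$ on $R$ and the vanishing of $\mu$ on odd monomials, together with the theorem, keeping the proof as short as the result warrants.
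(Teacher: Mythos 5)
Your proposal is correct and is essentially the paper's own argument: the paper derives the identity $f=\mu(f)=c_{0}+\sum_{k}c_{k}\mu\left(x^{I_{k}}y^{J_{k}}\right)$ inside the proof of the preceding theorem and then invokes that theorem (equivalently, the vanishing of $\mu$ on odd monomials) to conclude that only even pairs $\left(I_{k},J_{k}\right)$ occur. Both of your variants (discarding the zero odd summands, or citing the theorem to see there is nothing to discard) are valid and match the paper's reasoning.
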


If a multisymmetric polynomial has monomials with even multi indices,
such polynomial is signed symmetric, meaning that is invariant under
the action of elements of the form $\left(\left(a_{1},\ldots,a_{n}\right),e\right)\in W$.
In particular we can now conclude:

\medskip{}

\begin{thm}
A multisymmetric polynomial is signed symmetric if only if all its
monomials have even multi indices.
\end{thm}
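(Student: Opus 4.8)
The plan is to read the biconditional through the structure of the Weyl group $W=\mathbb{Z}_2^n\rtimes S_n$. The key observation is that $W$ is generated by its two evident subgroups: the permutations $S_n$, embedded as the elements $\left(\left(1,\ldots,1\right),\sigma\right)$, and the sign changes $\mathbb{Z}_2^n$, embedded as the elements $\left(\left(a_1,\ldots,a_n\right),e\right)$. Indeed, every $g=\left(\left(a_1,\ldots,a_n\right),\sigma\right)\in W$ factors as $\left(\left(a_1,\ldots,a_n\right),e\right)\cdot\left(\left(1,\ldots,1\right),\sigma\right)$, so a polynomial is signed multisymmetric, i.e. $W$-invariant, if and only if it is at once multisymmetric ($S_n$-invariant) and signed symmetric ($\mathbb{Z}_2^n$-invariant). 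With this in hand both implications reduce to facts already proved.

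For the forward implication I would take a multisymmetric $f$ that is also signed symmetric. By the generation remark such an $f$ is $W$-invariant, hence signed multisymmetric, and the preceding theorem immediately yields that every monomial of $f$ has even multi indices. Alternatively, and without even using multisymmetry, one can note that the generator $h_k=\left(\left(1,\ldots,-1,\ldots,1\right),e\right)$ scales each monomial $x^Iy^J$ by $\left(-1\right)^{i_k+j_k}$ without mixing monomials, so invariance under $h_k$ together with the linear independence of distinct monomials forces $i_k+j_k$ to be even; letting $k$ range over $1,\ldots,n$ gives the claim.

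For the reverse implication I would start from a multisymmetric $f$ all of whose monomials $x^Iy^J$ have even multi indices and verify signed symmetry by a one-line computation on monomials. For any $h=\left(\left(a_1,\ldots,a_n\right),e\right)\in\mathbb{Z}_2^n$ one has $h\cdot x^Iy^J=\left(\prod_{k=1}^{n}a_k^{i_k+j_k}\right)x^Iy^J$, and since each exponent $i_k+j_k$ is even while $a_k\in\left\{-1,1\right\}$, every factor $a_k^{i_k+j_k}$ equals $1$; thus $h$ fixes each monomial and therefore fixes $f$. This is precisely the remark recorded just before the statement.

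There is no serious obstacle here: the theorem is essentially a repackaging of the preceding theorem together with the monomial computation, and the only point that needs a word of justification is the group-generation fact $W=\langle S_n,\mathbb{Z}_2^n\rangle$, which is the defining feature of the semidirect product. The reason this is worth stating explicitly is that it is the hinge allowing both directions to inherit their content from results already in hand, rather than re-running the symmetrization argument built around the operator $\mu$.
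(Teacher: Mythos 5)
Your proposal is correct and follows essentially the same route as the paper: the paper gives no separate argument for this theorem but derives it by combining the preceding theorem (signed multisymmetric polynomials have even multi indices, via the symmetrization operator $\mu$) with the remark just before the statement (even multi indices imply invariance under the sign changes), exactly as you do, with the semidirect-product generation fact $W=\langle \mathbb{Z}_2^n, S_n\rangle$ supplying the forward implication. Your alternative forward argument --- that each generator $h_k$ acts diagonally on the monomial basis, so $\mathbb{Z}_2^n$-invariance alone forces every exponent sum $i_k+j_k$ to be even --- is a slight simplification, since it bypasses the coset computation behind $\mu$ entirely.
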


This result grant us the frame work to obtain generators for the algebra
\[
H^{*}\left(B_{com}\mathrm{Sp}\left(n\right);\mathbb{R}\right).
\]
 Recall that multisymmetric are generated by the power polynomials
\[
P_{a,b}:=\sum_{i=1}^{n}x_{i}^{a}y_{i}^{b}.
\]
On the other hand, due to the last result we know $P_{a,b}$ is signed
multi symmetric if and only if $a+b$ is even. Let's see that they
in fact generate all of the signed multisymmetric polynomials. 
\begin{thm}
$\left(\mathbb{R}\left[x_{1},\ldots,x_{n}\right]\otimes\mathbb{R}\left[y_{1},\ldots,y_{n}\right]\right)^{\mathbb{Z}_{2}^{n}\rtimes S_{n}}$
is generated as an algebra by the set
\[
\mathcal{G}:=\left\{ P_{a,b}:=\sum_{i=1}^{n}x_{i}^{a}y_{i}^{b}\mid0\leq a,b\:\mathrm{and}\:a+b\in2\mathbb{Z}\right\} .
\]
\end{thm}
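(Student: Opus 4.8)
The plan is to prove the two inclusions between the invariant ring $R=\left(\mathbb{R}[x_1,\ldots,x_n]\otimes\mathbb{R}[y_1,\ldots,y_n]\right)^{\mathbb{Z}_2^n\rtimes S_n}$ and the subalgebra $\langle\mathcal{G}\rangle$ it is claimed to be generated by. The inclusion $\langle\mathcal{G}\rangle\subseteq R$ is immediate and I would dispose of it first: each generator $P_{a,b}=\sum_i x_i^a y_i^b$ with $a+b$ even is fixed by every $\sigma\in S_n$ (which merely permutes the summands) and by every sign flip $h_k$ (which multiplies the summand $x_i^a y_i^b$ by $(\pm1)^{a+b}=1$), so $P_{a,b}\in R$, and hence so is the whole subalgebra generated by such elements. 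The real content is the reverse inclusion $R\subseteq\langle\mathcal{G}\rangle$.

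For $R\subseteq\langle\mathcal{G}\rangle$ I would first reduce to monomial symmetrizations. By Corollary \ref{cor: Signed symmetric expresion} every $f\in R$ can be written as $f=c_0+\sum_k c_k\,\mu(x^{I_k}y^{J_k})$ with each pair $(I_k,J_k)$ even, so it suffices to show $\mu(x^I y^J)\in\langle\mathcal{G}\rangle$ whenever $(I,J)$ is even. The key preliminary observation is that on an even monomial the $\mathbb{Z}_2^n$-averaging is invisible: for $g=((a_1,\ldots,a_n),\sigma)\in W$ the computation in the proof of the ``odd'' lemma gives $g\cdot x^I y^J=\bigl(\prod_k a_k^{i_k+j_k}\bigr)\,\sigma\cdot(x^I y^J)=\sigma\cdot(x^I y^J)$, since each $i_k+j_k$ is even. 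Hence $\mu(x^I y^J)$ coincides with the ordinary $S_n$-symmetrization, i.e.\ a positive-rational multiple of the monomial multisymmetric function $m_\lambda$ attached to the multiset $\lambda$ of the nonzero even pairs read off from $(I,J)$.

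The heart of the argument is then an induction on the number of parts $\ell$ of $\lambda$ (note $m_\lambda=0$ once $\ell>n$, so there is nothing to prove there). The base case $\ell=1$ is $m_{(a,b)}=P_{a,b}\in\mathcal{G}$. For the inductive step I would form the product $P_\lambda:=\prod_r P_{a_r,b_r}$ over the parts of $\lambda$; since every factor has $a_r+b_r$ even, $P_\lambda\in\langle\mathcal{G}\rangle$. Expanding $P_\lambda$ as a sum over maps $\phi\colon\{1,\ldots,\ell\}\to\{1,\ldots,n\}$ and grouping by $|\operatorname{Im}\phi|$, the injective $\phi$ contribute a positive integer multiple $N_\lambda$ of $m_\lambda$, whereas each non-injective $\phi$ merges parts into a monomial whose $i$-th pair is the sum $\sum_{\phi(r)=i}(a_r,b_r)$ — still even (a sum of even numbers) and strictly fewer in number. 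Collecting $S_n$-orbits yields $P_\lambda=N_\lambda\,m_\lambda+\sum_\mu c_\mu m_\mu$, where every $m_\mu$ is an even monomial multisymmetric function with fewer than $\ell$ parts. By the induction hypothesis each $m_\mu\in\langle\mathcal{G}\rangle$, and $P_\lambda\in\langle\mathcal{G}\rangle$, so $m_\lambda=N_\lambda^{-1}\bigl(P_\lambda-\sum_\mu c_\mu m_\mu\bigr)\in\langle\mathcal{G}\rangle$, division being legitimate over $\mathbb{R}$.

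The step I expect to be the main obstacle is precisely the bookkeeping of the last paragraph, namely verifying that the diagonal (collision) terms produced by non-injective $\phi$ remain in the even, signed-symmetric world \emph{and} strictly decrease the part count; this parity-preservation under merging is exactly what makes the induction on $\ell$ close, and it is the feature that distinguishes the symplectic case from a naive imitation of $U(n)$. Everything else is routine: the reduction through Corollary \ref{cor: Signed symmetric expresion}, the collapse of the $\mathbb{Z}_2^n$-average on even monomials, and the fact that the leading coefficient $N_\lambda$ is a positive integer (needed only so that we may divide). With this generation statement established, the same power-map manipulations used for $U(n)$ in the proof of Theorem \ref{thm:Generators HBUn} transfer, since $\Phi^k$ acts on each even $P_{a,b}$ by $P_{a,b}\mapsto k^b P_{a,b}$ exactly as before.
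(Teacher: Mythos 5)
Your proof is correct and follows essentially the same route as the paper's: a reduction via Corollary \ref{cor: Signed symmetric expresion} to symmetrizations of even monomials, followed by induction on the number of parts, expressing the top symmetrization through a product of the $P_{a,b}$ modulo collision terms that remain even and have strictly fewer parts. The only cosmetic difference is in the inductive step, where the paper peels off a single factor --- multiplying $\mu\left(x_{1}^{i_{1}}y_{1}^{j_{1}}\right)$ by the symmetrization of the remaining $k$ parts --- while you expand the full product $\prod_{r}P_{a_{r},b_{r}}$ over maps $\phi$ all at once; both close the induction by the identical mechanism (nonzero leading coefficient plus lower-part terms handled by the hypothesis).
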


\begin{proof}
By Corollary \ref{cor: Signed symmetric expresion} is enough to show
that for even multi indices $\left(I,J\right)$, 
\[
\mu\left(x^{I}y^{J}\right)\in\mathrm{gen}\mathcal{G}.
\]
 To see this, note that any permutation of the set of indices have
the same symmetrization. This is, for $k_{1},\ldots,k_{p}\in\left\{ 1,\ldots,n\right\} $
all mutually different, $p\leq n$, we have
\[
\mu\left(x_{k_{1}}^{i_{1}}\cdots x_{k_{p}}^{i_{p}}y_{k_{1}}^{j_{1}}\cdots y_{k_{p}}^{j_{p}}\right)=\mu\left(x_{1}^{i_{1}}\cdots x_{p}^{i_{p}}y_{1}^{j_{1}}\cdots y_{p}^{j_{p}}\right).
\]
So it is enough to show that 
\[
\mu\left(x_{1}^{i_{1}}\cdots x_{p}^{i_{p}}y_{1}^{j_{1}}\cdots y_{p}^{j_{p}}\right)\in\mathrm{gen}\mathcal{G},
\]
where of course $i_{k}+j_{k}$ is even for every $1\leq k\leq p$.
We do it using induction on $p$. The cases $p=1$ is immediate, since
in this case $\mu\left(x^{I}y^{J}\right)$ is a scalar multiple of
even power polynomials of the form $P_{a,0},P_{0,b}$ or $P_{a,b}$.

Next, assume we know $\mu\left(x_{1}^{i_{1}}\cdots x_{p}^{i_{p}}y_{1}^{j_{1}}\cdots y_{p}^{j_{p}}\right)\in\mathrm{gen}\mathcal{G}$
for $1\leq p\leq k$. By reordering we have 
\[
\mu\left(x_{1}^{i_{1}}y_{1}^{j_{1}}\right)\mu\left(x_{2}^{i_{2}}\cdots x_{k+1}^{i_{k+1}}y_{2}^{j_{2}}\cdots y_{k+1}^{j_{k+1}}\right)=c\mu\left(x_{1}^{i_{1}}\cdots x_{k+1}^{i_{k+1}}y_{1}^{j_{1}}\cdots y_{k+1}^{j_{k+1}}\right)+\Theta,
\]
with
\[
\Theta=\sum_{r=2}^{k+1}c_{r}\mu\left(x_{2}^{i_{2}}\cdots x_{r}^{i_{r}+i_{1}}\cdots x_{k+1}^{i_{k+1}}y_{2}^{j_{2}}\cdots y_{r}^{j_{r}+j_{1}}\cdots y_{k+1}^{j_{k+1}}\right),
\]
where $c_{2},\ldots,c_{k+1}$ are integers, and $c$ is a non zero
integer. This implies that 
\begin{align*}
\mu\left(x_{1}^{i_{1}}\cdots x_{k+1}^{i_{k+1}}y_{1}^{j_{1}}\cdots y_{k+1}^{j_{k+1}}\right) & =\frac{1}{c}\left(\mu\left(x_{1}^{i_{1}}y_{1}^{j_{1}}\right)\mu\left(x_{2}^{i_{2}}\cdots x_{k}^{i_{k}}y_{2}^{j_{2}}\cdots y_{k}^{j_{k}}\right)-\right.\\
 & \left.\sum_{r=2}^{k+1}c_{r}\mu\left(x_{2}^{i_{2}}\cdots x_{r}^{i_{r}+i_{1}}\cdots x_{k}^{i_{k}}y_{2}^{j_{2}}\cdots y_{r}^{j_{r}+j_{1}}\cdots y_{k}^{j_{k}}\right)\right).
\end{align*}
By the induction hypothesis all of the terms in the right are in $\mathrm{gen}\mathcal{G}$,
which implies that 
\[
\mu\left(x_{1}^{i_{1}}\cdots x_{k+1}^{i_{k+1}}y_{1}^{j_{1}}\cdots y_{k+1}^{j_{k+1}}\right)
\]
 belongs to $\mathrm{gen}\mathcal{G}$.
\end{proof}
With the last result at hand we can imitate the reasoning in the proof
of Theorem \ref{thm:Generators HBUn} to obtain the main result of
this part. 

\medskip{}

\begin{thm}
\label{thm:generatos H of Sp}The algebra $H^{*}\left(B_{com}\mathrm{Sp}\left(n\right);\mathbb{R}\right)$
is equal to the subalgebra 
\[
\mathcal{S}:=\left\langle \Phi^{k}\left(\mathrm{Im}\iota\right)\mid k\in\mathbb{Z}\setminus\left\{ 0\right\} \right\rangle .
\]
Where $\Phi^{k}$ are the power maps and $\iota:H^{*}\left(B\mathrm{Sp}\left(n\right);\mathbb{R}\right)\rightarrow H^{*}\left(B_{com}\mathrm{Sp}\left(n\right);\mathbb{R}\right)$
is the map induced by the homomorphism
\begin{align*}
\mathbb{R}\left[z_{1},\ldots,z_{n}\right] & \rightarrow\mathbb{R}\left[x_{1},\ldots,x_{n}\right]\otimes\mathbb{R}\left[y_{1},\ldots,y_{n}\right]\\
z_{i} & \mapsto x_{i}+y_{i}.
\end{align*}
\end{thm}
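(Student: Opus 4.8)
The plan is to reduce the theorem to the two facts we have already assembled: that the signed multisymmetric polynomials $R=\left(\mathbb{R}\left[x_{1},\ldots,x_{n}\right]\otimes\mathbb{R}\left[y_{1},\ldots,y_{n}\right]\right)^{W}$ are generated as an algebra by the power polynomials $P_{a,b}$ with $a+b$ even, and that the power map acts on such a generator by $\Phi^{k}\left(P_{a,b}\right)=k^{b}P_{a,b}$. Since $\mathcal{S}\subseteq H^{*}\left(B_{com}\mathrm{Sp}\left(n\right)\right)=R/J$ holds by construction, it suffices to prove the reverse inclusion, and because $\mathcal{S}$ is a subalgebra it is enough to show that the class of every generator $P_{a,b}$ (with $a+b$ even) lies in $\mathcal{S}$. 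This is precisely the shape of the argument already carried out for $U\left(n\right)$ in Theorem \ref{thm:Generators HBUn}, so I would mirror that proof, the only differences being the source of the invariant classes fed through $\iota$ and the parity constraint $a+b\in 2\mathbb{Z}$.

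First I would identify the relevant elements of $\mathrm{Im}\,\iota$. Since $W=\mathbb{Z}_{2}^{n}\rtimes S_{n}$, the ring $H^{*}\left(B\mathrm{Sp}\left(n\right)\right)\cong\mathbb{R}\left[z_{1},\ldots,z_{n}\right]^{W}$ consists of the symmetric polynomials in the squares $z_{i}^{2}$, and so is generated by the even power sums $p_{2m}:=\sum_{i}z_{i}^{2m}$ for $m\geq1$. Hence $\iota\left(p_{2m}\right)\in\mathrm{Im}\,\iota$ for all $m$, and applying $z_{i}\mapsto x_{i}+y_{i}$ together with the binomial theorem gives
\[
\iota\left(p_{2m}\right)=\sum_{j=0}^{2m}\binom{2m}{j}P_{2m-j,j}\left(n\right).
\]
Reducing modulo $J$, the single term $P_{2m,0}\left(n\right)=\sum_{i}\left(x_{i}^{2}\right)^{m}$ is a positive-degree $W$-invariant in the $x$-variables, hence lies in $J$ and drops out, leaving $\iota\left(p_{2m}\right)\equiv\sum_{j=1}^{2m}\binom{2m}{j}P_{2m-j,j}\left(n\right)\pmod{J}$.

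With this in hand I would run exactly the extraction used for $U\left(n\right)$: because $\Phi^{k}$ scales the $j$-th summand by the distinct factor $k^{j}$, the same recursion $A_{k}:=\Phi^{k+1}\left(A_{k-1}\right)-\left(k+1\right)^{k}A_{k-1}$ (now run through $k=1,\ldots,2m-1$), equivalently a Vandermonde argument in distinct nonzero integers $k$, isolates first $P_{0,2m}\left(n\right)$ and then, by backward recursion, every $P_{2m-j,j}\left(n\right)$ with $j\geq1$, all modulo $J$ and all inside the subalgebra $\mathcal{S}$. The remaining generators are those of the form $P_{a,0}\left(n\right)$ with $a$ even, and these already vanish in $R/J$ since $\sum_{i}x_{i}^{a}=\sum_{i}\left(x_{i}^{2}\right)^{a/2}\in J$; thus the class of every generator lies in $\mathcal{S}$. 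As these generate $R$ and $\mathcal{S}$ is a subalgebra, this gives $R/J\subseteq\mathcal{S}$ and hence equality. The one point requiring care, and the only genuine departure from the $U\left(n\right)$ case, is verifying that it is exactly the even power sums that lie in $\mathrm{Im}\,\iota$ and that the parity of $a+b$ matches what the generation result for signed multisymmetric polynomials demands; the separation of the summands $P_{2m-j,j}$ is then purely formal, governed by the invertibility of a Vandermonde system in the distinct exponents $k^{j}$.
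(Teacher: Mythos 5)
Your proposal is correct and follows essentially the same route as the paper's proof: expand $\iota\left(p_{m}\right)$ for even $m$ via the binomial theorem, discard the $P_{m,0}$ term modulo $J$, and run the recursion $A_{k}:=\Phi^{k+1}\left(A_{k-1}\right)-\left(k+1\right)^{k}A_{k-1}$ (a Vandermonde-type extraction) to isolate each $P_{m-j,j}$ inside $\mathcal{S}$, invoking the prior result that the $P_{a,b}$ with $a+b$ even generate the signed multisymmetric polynomials. The only difference is that you spell out two points the paper leaves implicit — that $H^{*}\left(B\mathrm{Sp}\left(n\right)\right)$ is generated by the even power sums $p_{2m}$, and that the generators $P_{a,0}$ with $a$ even die in $J$ — which is a welcome clarification rather than a departure.
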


\begin{proof}
Take once again
\[
p^{m}=z_{1}^{m}+z_{2}^{m}+\cdots+z_{n}^{m}\in\mathbb{R}\left[z_{1},\ldots,z_{n}\right]
\]
for $m$ even. We also work modulo $J$, the ideal generated by the
$x_{i}^{2}$. Recall that
\[
\iota\left(p^{m}\right)=\sum_{j=1}^{m}\binom{m}{j}P_{m-j,j}\left(n\right).
\]
Since $\left(m-j\right)+j=m$, all of the power polynomials $P_{m-j,j}\left(n\right)$
are even. Now we use recursion to get first $P_{0,m}\left(n\right)$
from the last equality: for this we name $A_{0}:=\iota\left(p_{m}\right)$,
then we take
\[
A_{1}:=\Phi^{2}\left(A_{0}\right)-2A_{0}=\sum_{j=2}^{m}\left(2^{j}-2\right)\binom{m}{j}P_{m-j,j}\left(n\right)
\]
and
\[
A_{2}:=\Phi^{3}\left(A_{1}\right)-3^{2}A_{1}=\sum_{j=3}^{m}\left(2^{j}-2\right)\left(3^{j}-3^{2}\right)\binom{m}{j}P_{m-j,j}\left(n\right).
\]
In general for $1\leq k\leq m-1$ we define 
\[
A_{k}:=\Phi^{k+1}\left(A_{k-1}\right)-\left(k+1\right)^{k}A_{k-1}.
\]
Notice that every $A_{k}$ has non zero coefficients only for $P_{m-j,j}\left(n\right)$
for $k+1\leq j\leq m$. Since $A_{0}\in\mathcal{S}$ by definition
and every $A_{k}$ is defined in terms of the power maps and $A_{k-1}$,
induction implies that $A_{k}\in\mathcal{S}$ for every $1\leq k\leq m-1$.
In particular we have 
\[
P_{0,m}\left(n\right)=\left(\prod_{k=2}^{m}\left(k^{m}-k^{k-1}\right)\right)^{-1}A_{m-1}\in\mathcal{S}.
\]

We now can apply a similar procedure to 
\[
\iota\left(p_{m}\right)-P_{0,m}\left(n\right)=\sum_{j=1}^{m-1}\binom{m}{j}P_{m-j,j}\left(n\right)\in\mathcal{S}
\]
to conclude that if $m=2k$, and $P_{a,b}$ is such that $a+b=m$
then $P_{a,b}\in\mathcal{S}.$
\end{proof}
\medskip{}

\section{Chern-Weil theory for TC structures }

In this section we develop characteristic classes for TC structures.
Our central goal is to obtain characteristic classes for TC structures
using Chern-Weil theory. Specifically, we will develop this for TC
structures over vector bundles whose structural group is either $U\left(n\right)$
or $SU\left(n\right)$. Thus, by $G$ we will mean one of these groups. 

Recall that we have the $k$-th power maps $\Phi^{k}:H^{*}\left(B_{\mathrm{com}}G\right)\rightarrow H^{*}\left(B_{\mathrm{com}}G\right)$,
and a natural inclusion $\iota:H^{*}\left(BG\right)\rightarrow H^{*}\left(B_{\mathrm{com}}G\right)$.
We already proved that 

\medskip{}

\begin{thm}
For $G$ equal to $U\left(n\right)$ or $SU\left(n\right)$, then
\[
H^{*}\left(B_{\mathrm{com}}G\right)=\mathrm{gen}\left\{ \Phi^{k}\circ\iota\left(c\right)\mid c\in H^{*}(BG),k\in\mathbb{Z}\setminus\left\{ 0\right\} \right\} .
\]
\end{thm}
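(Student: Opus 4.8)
The plan is to recognize that this statement merely consolidates the two case-by-case results already established, so the only real content is a notational identification followed by a case split. First I would record the elementary identity
\[
\Phi^{k}\left(\mathrm{Im}\,\iota\right)=\left\{ \Phi^{k}\circ\iota\left(c\right)\mid c\in H^{*}\left(BG\right)\right\},
\]
which holds because $\mathrm{Im}\,\iota$ is by definition the set $\left\{ \iota\left(c\right)\mid c\in H^{*}\left(BG\right)\right\}$ and $\Phi^{k}$ is applied to it pointwise. Taking the union over all $k\in\mathbb{Z}\setminus\left\{ 0\right\}$ and passing to the generated subalgebra, this shows that the object $\mathcal{S}=\left\langle \Phi^{k}\left(\mathrm{Im}\,\iota\right)\mid k\in\mathbb{Z}\setminus\left\{ 0\right\} \right\rangle$ of Theorem \ref{thm:Generators HBUn} and its $SU\left(n\right)$ analogue coincides with $\mathrm{gen}\left\{ \Phi^{k}\circ\iota\left(c\right)\mid c\in H^{*}\left(BG\right),\,k\in\mathbb{Z}\setminus\left\{ 0\right\} \right\}$, since the subalgebra generated by a family of sets equals the subalgebra generated by their union.

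With that identification in hand, the proof splits into the two admissible cases. For $G=U\left(n\right)$, Theorem \ref{thm:Generators HBUn} asserts exactly $H^{*}\left(B_{\mathrm{com}}U\left(n\right)\right)=\mathcal{S}$; for $G=SU\left(n\right)$ the corresponding theorem of the previous subsection gives $H^{*}\left(B_{\mathrm{com}}SU\left(n\right)\right)=\mathcal{S}$. Combining the two, together with the equality of subalgebras noted above, yields the claim in the uniform form stated here.

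I do not expect a genuine obstacle at this stage: the substantive work was already carried out in the earlier theorems, where the multisymmetric generators $P_{a,b}\left(n\right)$ (respectively their $SU\left(n\right)$-adapted versions) were exhibited as explicit polynomial combinations of power-map images of $\iota$, using the recursion $A_{k}:=\Phi^{k+1}\left(A_{k-1}\right)-\left(k+1\right)^{k}A_{k-1}$ and the closure of $\mathcal{S}$ under the power maps. The present statement is their common reformulation, deliberately phrased so as to feed directly into the Chern--Weil computation developed in the remainder of the section, where each generator $\Phi^{k}\circ\iota\left(c\right)$ is to be realized geometrically through the curvature of the $k$-th associated bundle.
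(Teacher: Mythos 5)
Your proposal is correct and matches the paper exactly: the paper states this theorem with the phrase ``We already proved that'' and offers no new argument, since it is precisely the consolidation of Theorem \ref{thm:Generators HBUn} and its $SU(n)$ counterpart under the identification $\mathcal{S}=\mathrm{gen}\left\{ \Phi^{k}\circ\iota\left(c\right)\mid c\in H^{*}\left(BG\right),\,k\in\mathbb{Z}\setminus\left\{ 0\right\} \right\}$ that you spell out.
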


This means that given a class in $H^{*}\left(B_{\mathrm{com}}G\right)$,
it can be written as a sum of finite products of elements of the form
$\Phi^{k}\circ\iota\left(c\right)$, $c\in H^{*}(BG),k\in\mathbb{Z}\setminus\left\{ 0\right\} $.
Now we continue with a construction that allow us to use Chern-Weil
theory to compute characteristic classes associated to the previous
classes. 

\medskip{}

\subsection{Chern-Weil theory for TC structures:}

For the rest of this section let $\varepsilon\in\left[M,B_{\mathrm{com}}G\right]$
be an equivalence class with an underlying smooth vector bundle $E\rightarrow M$
and structure group $U\left(n\right)$ or $SU\left(n\right)$. For
an element $p\in H^{*}\left(B_{\mathrm{com}}G,\mathbb{R}\right)$
we denote by $p\left(\varepsilon\right)\in H^{*}\left(M\right)$ the
value of the TC characteristic class on the TC equivalence class $\varepsilon$.
Also, recall that via the Chern-Weil isomorphism, if $\mathfrak{g}$
is the Lie algebra of $G$, then $H^{*}\left(BG\right)\cong I\left(\mathfrak{g}\right)$.
Here $I\left(\mathfrak{g}\right)$ is the subalgebra of invariant
polynomials under conjugation of the polynomial algebra of $\mathfrak{g}$.
Under this identification, every characteristic class for vector bundles
(having $G$ as its structure group) can be identified with a polynomial
$c\in I\left(\mathfrak{g}\right)$. 

Now recall that for a smooth vector bundle $F\rightarrow M$ with
curvature $\Omega$, the value on $F$ of the characteristic class
associated to $c$ is equal to $c\left(\Omega\right)\in H^{*}\left(M\right)$.
Under these terms, we are now able to compute the TC characteristic
classes associated to the set of generators of $H^{*}\left(B_{\mathrm{com}}G\right)$,
$\left\{ \Phi^{k}\circ\iota\left(c\right)\mid1\leq i\leq n,k\in\mathbb{Z}\setminus\left\{ 0\right\} \right\} $.
Here, we take $\iota$ to be a map from $I\left(\mathfrak{g}\right)$
to $H^{*}\left(B_{\mathrm{com}}G\right)$.

\medskip{}

\begin{thm}
Consider $\varepsilon\in\left[M,B_{\mathrm{com}}G\right]$ an equivalence
class with an underlying smooth vector bundle $E\rightarrow M$, and
structure group $U\left(n\right)$ or $SU\left(n\right)$. Also let
$\Omega_{k}$ be the curvature of $E^{k}$, the $k$-th associated
bundle of $E$. Then for $c\in I\left(\mathfrak{g}\right)$ and $p=\Phi^{k}\circ\iota\left(c\right)\in H^{*}\left(B_{\mathrm{com}}G\right)$,
the TC characteristic class $p\left(\varepsilon\right)$ has same
class in $H^{*}\left(M\right)$ as the characteristic class for vector
bundles $c\left(E^{k}\right)$. This implies that
\[
p\left(\varepsilon\right)=c\left(\Omega_{k}\right)\in H^{*}\left(M\right).
\]
\end{thm}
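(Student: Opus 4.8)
The plan is to reduce the statement to a short chain of pullback identities, combining the correspondence between TC characteristic classes and $H^*(B_{\mathrm{com}}G)$ with the homotopy-commutative diagram for the $k$-th associated bundle, and then to invoke the classical Chern--Weil theorem only at the very last step. Throughout I would distinguish the space-level power map $\Phi^k\colon B_{\mathrm{com}}G\to B_{\mathrm{com}}G$ from the induced operation on cohomology, and write $j\colon B_{\mathrm{com}}G\hookrightarrow BG$ for the inclusion, so that the map $\iota$ is exactly $j^*$ and the operation $p=\Phi^k\circ\iota(c)$ is the pullback of $j^*(c)$ along $\Phi^k$.

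First I would choose a representative $f\colon M\to B_{\mathrm{com}}G$ of $\varepsilon$. By the correspondence of the earlier Proposition, specifically Equation \ref{eq:TC classes and classes of cohom BcomG}, the value of the TC characteristic class attached to $p$ on $\varepsilon$ is just the pullback $p(\varepsilon)=f^*(p)$, which is well defined because cohomology is a homotopy functor. Writing $p$ as the pullback of $j^*(c)$ under $\Phi^k$ and using contravariant functoriality on the two adjacent compositions, I obtain
\[
p(\varepsilon)=f^*\bigl(\Phi^k(j^*(c))\bigr)=(\Phi^k\circ f)^*(j^*(c)).
\]
Next I would apply Proposition \ref{thm:(Classifying-functions-for assiciated bundles}, which supplies the homotopy $\Phi^k\circ f\simeq f^k$, where $f^k$ is the TC lift of the $k$-th associated bundle $E^k$; since homotopic maps induce equal maps on cohomology, this rewrites the expression as $(f^k)^*(j^*(c))=(j\circ f^k)^*(c)$. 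The defining property of a TC structure for $E^k$ is precisely that $j\circ f^k$ is homotopic to the classifying map $g_k\colon M\to BG$ of $E^k$, so $(j\circ f^k)^*(c)=g_k^*(c)$. Finally, the classical Chern--Weil theorem identifies $g_k^*(c)$, for $c\in I(\mathfrak{g})\cong H^*(BG)$, with the characteristic form $c(\Omega_k)$ built from the curvature $\Omega_k$ of any connection on $E^k$, yielding $p(\varepsilon)=c(\Omega_k)$.

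I expect the argument to be essentially formal, since every individual step is either a functoriality property of the pullback or a result proved earlier in the paper. The one place where care is genuinely needed — and the natural spot for an error to hide — is the bookkeeping of the three identifications: reading the algebraic power operation $\Phi^k$ on $H^*(B_{\mathrm{com}}G)$ as the pullback of the space-level map $\Phi^k$, reading $\iota$ as $j^*$, and confirming that $j\circ f^k$ really is a classifying map for $E^k$ rather than for some other bundle. I would verify this last identification honestly by unwinding the cocycle description $\rho_{ij}^k=\rho_{ij}^{\,k}$ of $E^k$ and the simplicial formula $f^k_l=\Phi^k_l\circ f_l$ from the preceding subsection, so that the homotopy $\iota\circ f^k\simeq g_k$ is not merely assumed but traced back to the nerve-of-the-cover construction.
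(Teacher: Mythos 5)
Your proposal is correct and follows essentially the same route as the paper's own proof: pull back $p$ along a representative $f$, use Proposition \ref{thm:(Classifying-functions-for assiciated bundles} to replace $f^{*}\circ\Phi^{k}$ by $\left(f^{k}\right)^{*}$, and then apply the Chern--Weil isomorphism to the classifying map of $E^{k}$. The only difference is that you spell out the identifications (space-level versus cohomology-level $\Phi^{k}$, $\iota=j^{*}$, and $j\circ f^{k}$ classifying $E^{k}$) more explicitly than the paper does, which is a point in your favor but not a different argument.
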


\begin{proof}
This is straight forward. First, by Theorem \ref{thm:(Classifying-functions-for assiciated bundles}
we know that if $f$ and $f_{k}$ the the classifying functions for
TC structures over $E\rightarrow M$ and $E^{k}\rightarrow M$, respectively,
then there is the following commuting diagram
\[
\xymatrix{H^{*}\left(B_{\mathrm{com}}G\right)\ar[r]^{f^{*}} & H^{*}\left(M\right)\\
H^{*}\left(B_{\mathrm{com}}G\right)\ar[ru]^{f_{k}^{*}}\ar[u]^{\Phi^{k}}.
}
\]
This means that for $c\in H^{*}\left(BG\right)$ we have the identity
$f^{*}\left(\Phi^{k}\circ\iota\left(c\right)\right)=f_{k}^{*}\left(\iota\left(c\right)\right)$
in $H^{*}\left(M\right)$. 

In turn, since the composition $f_{k}^{*}\circ\iota$ is a classifying
function for the vector bundle $E^{k}\rightarrow M$, we can apply
the Chern-Weil isomorphism. That is, we can consider the curvature
$\Omega_{k}$ of $E_{k}$ to obtain that 
\[
f_{k}^{*}\left(\iota\left(c\right)\right)=c\left(\Omega_{k}\right).
\]
The conclusion of the theorem then follows by transitivity. 
\end{proof}
\medskip{}

\begin{thm}
(Chern-Weil theory for TC structures) Consider $\varepsilon\in\left[M,B_{\mathrm{com}}G\right]$
an equivalence class with an underlying smooth vector bundle $E\rightarrow M$,
and structure group $U\left(n\right)$ or $SU\left(n\right)$. Also
let $\Omega_{k}$ be the curvature of $E^{k}$, the $k$-th associated
bundle of $E$. Then every TC characteristic class can be obtained
as a linear combinations of products of the form 
\[
s_{1}\left(\Omega_{k_{1}}\right)\cdot s_{2}\left(\Omega_{k_{2}}\right)\cdots s_{m}\left(\Omega_{k_{m}}\right)\in H^{*}\left(M\right),
\]
where $s_{i}\in H^{*}\left(BG\right)$ and $k_{i}\in\mathbb{Z}$.
Each $s_{i}\left(\Omega_{k_{1}}\right)$ is the characteristic class
of the vector bundle $E^{k}\rightarrow M$ computed using its curvature. 
\end{thm}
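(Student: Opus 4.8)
The plan is to obtain the statement as a formal consequence of the generation theorem for $H^*(B_{\mathrm{com}}G)$ together with the preceding theorem, which computes the TC characteristic class attached to a single generator $\Phi^k\circ\iota(c)$ as the Chern--Weil class $c(\Omega_k)$. First I would recall, from the proposition identifying TC characteristic classes with elements of $H^*(B_{\mathrm{com}}G,\mathbb{R})$, that evaluating a TC characteristic class on the equivalence class $\varepsilon=[f]$ is the same as applying the pullback $f^*$ to the corresponding class $p\in H^*(B_{\mathrm{com}}G)$; indeed, by Equation \ref{eq:TC classes and classes of cohom BcomG} the value is $f^*(p)\in H^*(M)$. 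Thus it suffices to show that $f^*(p)$ has the asserted shape for every $p$.

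Since $G$ is $U(n)$ or $SU(n)$, the generation theorem lets me write any $p\in H^*(B_{\mathrm{com}}G)$ as a finite $\mathbb{R}$-linear combination of products of generators,
\[
p=\sum_l \alpha_l \prod_{j=1}^{m_l}\Phi^{k_{l,j}}(\iota(s_{l,j})),
\]
with $\alpha_l\in\mathbb{R}$, $s_{l,j}\in H^*(BG)$ and $k_{l,j}\in\mathbb{Z}\setminus\{0\}$. The key step is then that $f^*$ is a homomorphism of graded $\mathbb{R}$-algebras, hence carries sums to sums and products to products. Applying $f^*$ to the expression above reduces the computation of $f^*(p)$ to the values $f^*(\Phi^{k}(\iota(s)))$ on the individual generators.

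By the previous theorem each such value equals $s(\Omega_k)$, the characteristic class of the $k$-th associated bundle $E^k\to M$ computed from its curvature $\Omega_k$ via Chern--Weil theory. Substituting and using multiplicativity of $f^*$ yields
\[
f^*(p)=\sum_l \alpha_l \prod_{j=1}^{m_l} s_{l,j}(\Omega_{k_{l,j}}),
\]
which is precisely a linear combination of products of the stated form, completing the argument.

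I expect there to be no serious obstacle here: the statement is essentially a repackaging of the generation theorem and the single-generator computation, and the substantive work has already been done. The only points requiring care are that the assignment $p\mapsto p(\varepsilon)=f^*(p)$ genuinely respects the algebra structure---which is what licenses the reduction to generators---and that each $\Omega_k$ exists and Chern--Weil theory applies, both of which are guaranteed by the standing hypothesis that $E$, and therefore each associated bundle $E^k$, is a smooth vector bundle with structure group $U(n)$ or $SU(n)$.
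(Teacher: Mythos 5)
Your proposal is correct and follows essentially the same route as the paper's proof: decompose an arbitrary class via the generation theorem $\mathcal{S}=H^{*}\left(B_{\mathrm{com}}G\right)$, then evaluate each generator $\Phi^{k}\circ\iota\left(s\right)$ using the preceding theorem to get $s\left(\Omega_{k}\right)$. In fact you are slightly more careful than the paper, since you make explicit that the evaluation $p\mapsto p\left(\varepsilon\right)=f^{*}\left(p\right)$ is an algebra homomorphism (the paper leaves this implicit and writes the abbreviated identity $\Phi^{k_{i}}\left(\iota\left(s_{i}\right)\right)=s_{i}\left(\Omega_{k_{i}}\right)$ rather than its pullback along $f$), which is exactly the point that licenses the reduction to generators.
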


\begin{proof}
Recall that if we set $\mathcal{S}$ as the subalgebra of $H^{*}\left(B_{\mathrm{com}}G\right)$
generated by 
\[
\left\{ \Phi^{k}\circ\iota\left(s\right)\mid1\leq i\leq n,k\in\mathbb{Z}\setminus\left\{ 0\right\} ,s\in H^{*}\left(BG\right)\right\} 
\]
then we have $\mathcal{S}=H^{*}\left(B_{\mathrm{com}}G\right)$. Thus,
every element of $H^{*}\left(B_{\mathrm{com}}G\right)$ can be written
as a linear combination of products of the form
\[
\Phi^{k_{1}}\left(\iota\left(s_{1}\right)\right)\cdot\Phi^{k_{m}}\left(\iota\left(s_{2}\right)\right)\cdots\Phi^{k_{m}}\left(\iota\left(s_{m}\right)\right).
\]
Then we can apply the previous theorem to obtain 
\[
\Phi^{k_{i}}\left(\iota\left(s_{i}\right)\right)=s_{i}\left(\Omega_{k_{i}}\right).
\]
\end{proof}
As suggested by the name of the theorem, we are now able to compute
TC characteristic classes by using Chern-Weil theory. This is done
in a three steps process for a class in $s\in H^{*}\left(B_{\mathrm{com}}G\right)$
and a TC structure $\xi$ over a vector bundle $E\rightarrow M$:
first we need to decompose $s$ in terms of the generators in 
\[
\left\{ \Phi^{k}\circ\iota\left(c\right)\mid1\leq i\leq n,k\in\mathbb{Z}\setminus\left\{ 0\right\} \right\} .
\]
 Secondly, for each of the generators $\Phi^{k}\circ\iota\left(c\right)$
in the decomposition of $s$ we use the curvature of the $k$-th associated
bundle, $\Omega_{k}$, to compute the characteristic class associated
to it, $c\left(\Omega_{k}\right)\in H^{*}\left(M\right)$ (this class
is equal to the TC class given by $\left(\Phi^{k}\circ\iota\left(c\right)\right)\left(\xi\right)$).
Finally we replace the values of each $\left(\Phi^{k}\circ\iota\left(c\right)\right)\left(\xi\right)$
to obtain $s\left(\xi\right)\in H^{*}\left(M\right)$. 

Recall from Chapter 3 that when $G$ is equal to $U\left(n\right)$,
then 
\[
H^{*}\left(B_{\mathrm{com}}G,\mathbb{R}\right)\cong\left(\mathbb{R}\left[x_{1},\ldots,x_{n}\right]\otimes\mathbb{R}\left[y_{1},\ldots,y_{n}\right]\right)^{S_{n}}/J
\]
where $S_{n}$ acts by permutation on their indexes and $J$ is the
ideal generated by the invariant polynomials of positive degree on
the $x_{i}$. When $G$ is $SU\left(n\right)$ is the same description
for $H^{*}\left(B_{\mathrm{com}}G,\mathbb{R}\right)$ except $J$
is generated by the invariant polynomials of positive degree on $x_{i}$
and the polynomial $y_{1}+\cdots y_{n}$. 

We also have the identifications 
\[
H^{*}\left(BU\left(n\right),\mathbb{R}\right)\cong\mathbb{R}\left[z_{1},\ldots,z_{n}\right]^{S_{n}}
\]
 and 
\[
H^{*}\left(BSU\left(n\right),\mathbb{R}\right)\cong\mathbb{R}\left[z_{1},\ldots,z_{n}\right]^{S_{n}}/\left\langle z_{1}+\dots+z_{n}\right\rangle .
\]
 Then we have that the polynomials 
\[
p_{i}=z_{1}^{i}+\cdots+z_{n}^{i}\in\mathbb{R}\left[z_{1},\ldots,z_{n}\right]
\]
generated all of $H^{*}\left(BG,\mathbb{R}\right)$, when $G$ is
$U\left(n\right)$ or $SU\left(n\right)$. Even further for $a,b\in\mathbb{N}\cup\left\{ 0\right\} $
such that $0<a+b$ then 
\[
P_{a,b}\left(n\right):=\sum_{i=1}^{n}x^{a}y^{b}\:\mathrm{mod}\:J.
\]
 generated all of $H^{*}\left(B_{\mathrm{com}}G,\mathbb{R}\right)$
as an algebra. We also saw in the proof of Theorem \ref{thm:Generators HBUn}
there every $P_{a,b}\left(n\right)$ can be obtain, via a recursive
procedure, as a linear combination of elements of the form $\Phi^{k}\left(\iota\left(p_{i}\right)\right)$.
With that recursive procedure and the previous theorem, we can compute
the TC characteristic classes corresponding to each $P_{a,b}\left(n\right)$.

Recall that another set of generators for $H^{*}\left(BG,\mathbb{R}\right)$,
when $G$ is $U\left(n\right)$ or $SU\left(n\right)$ is given by
the polynomials $\sigma_{i}$, characterized by the equation 
\[
\det\left(I-tX\right)=1+t\sigma_{1}\left(X\right)+t^{2}\sigma_{2}\left(X\right)+\cdots+t^{n}\sigma_{n}\left(X\right).
\]
These generators are more commonly used instead of the $p_{i}$, as
$\sigma_{i}$ are used in the definition of Chern classes. 

\medskip{}

\begin{example}
We saw previously that for $G=U\left(3\right)$ we have the equalities 
\begin{enumerate}
\item $y_{1}+y_{2}+y_{3}=\iota\left(z_{1}+z_{2}+z_{3}\right)$.
\item $y_{1}^{2}+y_{2}^{2}+y_{3}^{2}=\frac{1}{2}\left(\iota\left(z_{1}^{2}+z_{2}^{2}+z_{3}^{2}\right)+\Phi^{-1}\left(\iota\left(z_{1}^{2}+z_{2}^{2}+z_{3}^{2}\right)\right)\right)$.
\item $x_{1}y_{1}+x_{2}y_{2}+x_{3}y_{3}=\frac{1}{4}\left(\iota\left(z_{1}^{2}+z_{2}^{2}+z_{3}^{2}\right)-\Phi^{-1}\left(\iota\left(z_{1}^{2}+z_{2}^{2}+z_{3}^{2}\right)\right)\right)$.
\end{enumerate}
Now consider a TC strcutrure $\xi$ with underlying vector bundle
$E\rightarrow M$, with curvature $\Omega$ and $\Omega_{k}$ is the
curvature of the $k$-th associated bundle. Now since we have that
$p_{1}=\sigma_{1}$ and that
\[
p_{2}=\sigma_{1}^{2}-2\sigma_{2}
\]
we obtain that 
\begin{enumerate}
\item $\left(y_{1}+y_{2}+y_{3}\right)\left(\xi\right)=\sigma_{1}\left(\Omega\right)$.
\item $\left(y_{1}^{2}+y_{2}^{2}+y_{3}^{2}\right)\left(\xi\right)=\frac{1}{2}\left(\sigma_{1}\left(\Omega\right)^{2}+\sigma_{1}\left(\Omega_{-1}\right)^{2}\right)-\left(\sigma_{2}\left(\Omega\right)+\sigma_{2}\left(\Omega_{-1}\right)\right)$.
\item And finally 
\begin{align*}
\left(x_{1}y_{1}+x_{2}y_{2}+x_{3}y_{3}\right)\left(\xi\right)= & \frac{1}{4}\left(\sigma_{1}\left(\Omega\right)^{2}-\sigma_{1}\left(\Omega_{-1}\right)^{2}\right)\\
 & +\frac{1}{2}\left(\sigma_{2}\left(\Omega_{-1}\right)-\sigma_{2}\left(\Omega\right)\right).
\end{align*}
\end{enumerate}
\end{example}

For $G=U\left(n\right)$ we know that $H^{*}\left(BG\right)$ is a
polynomial algebra generated by the Chern classes $c_{i}$, $1\leq i\leq n$.
Thus it follows that $\mathcal{S}$ is generated by the set $\left\{ \Phi^{k}\circ\iota\left(c_{i}\right)\mid1\leq i\leq n,k\in\mathbb{Z}\setminus\left\{ 0\right\} \right\} $. 

\medskip{}

\begin{defn}
We call the classes of the form $c_{i}^{k}:=\Phi^{k}\circ\iota\left(c\right)\in H^{*}\left(B_{\mathrm{com}}U\left(n\right)\right)$
the \textbf{TC Chern classes}. Also, for a TC structure $\varepsilon$
with underlying vector bundle $E\rightarrow M$ we call 
\[
c_{i}^{k}\left(\varepsilon\right):=f^{*}\left(c_{i}^{k}\right)\in H^{*}\left(M\right)
\]
the TC $\left(i,k\right)$-Chern class. Here $f:M\rightarrow B_{\mathrm{com}}U\left(n\right)$
is the classifying function of the TC structure. 
\end{defn}

From the previous theorem we have the immediate following corollary:

\medskip{}

\begin{cor}
Let $E\rightarrow M$ by the underlying bundle of a TC structure structure
$\varepsilon$, and let $\Omega_{k}$ be the curvature of the $k$-th
associated bundle. Then $c_{i}^{k}(\epsilon)=c_{i}\left(\Omega_{k}\right)$.
\end{cor}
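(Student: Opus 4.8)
The plan is to treat this corollary as a direct specialization of the immediately preceding theorem, which already establishes that $p(\varepsilon)=c(\Omega_k)$ for every $c\in I(\mathfrak{g})$ and $p=\Phi^k\circ\iota(c)$. First I would unwind the two relevant definitions: the TC Chern class $c_i^k:=\Phi^k\circ\iota(c_i)\in H^*\left(B_{\mathrm{com}}U(n)\right)$, and its value on a TC structure $\varepsilon$, namely $c_i^k(\varepsilon):=f^*\left(c_i^k\right)\in H^*(M)$, where $f\colon M\to B_{\mathrm{com}}U(n)$ is the classifying function of $\varepsilon$. The point is that this matches exactly the notation $p(\varepsilon)$ of the previous theorem once we read $p(\varepsilon)$ as the value $f^*(p)$ of the TC characteristic class associated to $p$.

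Next I would observe that the $i$-th Chern class $c_i$ is an element of $H^*(BG)$, and under the Chern--Weil isomorphism $H^*(BG)\cong I(\mathfrak{g})$ it corresponds to an invariant polynomial, so $c_i$ is a legitimate choice of the element $c\in I(\mathfrak{g})$ to which the previous theorem applies. Applying that theorem with this $c=c_i$ and the given $k$, and taking $p=\Phi^k\circ\iota(c_i)=c_i^k$, yields
\[
c_i^k(\varepsilon)=p(\varepsilon)=c_i(\Omega_k),
\]
which is precisely the claimed identity.

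I do not anticipate a genuine obstacle here: the corollary is a notational restatement of the preceding theorem specialized to the generators $c=c_i$. The only thing that requires a moment of care is confirming that the definitional symbol $c_i^k(\varepsilon)=f^*\left(c_i^k\right)$ coincides with the symbol $p(\varepsilon)$ used in the theorem; once that bookkeeping is checked, the conclusion is immediate and no further computation is needed.
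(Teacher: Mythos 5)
Your proposal is correct and is exactly the paper's route: the paper states this corollary as an immediate specialization of the preceding theorem, and your argument simply instantiates that theorem with $c=c_{i}$ (viewed in $I\left(\mathfrak{g}\right)$ via the Chern--Weil isomorphism) and checks that $c_{i}^{k}\left(\varepsilon\right)=f^{*}\left(c_{i}^{k}\right)$ agrees with the theorem's notation $p\left(\varepsilon\right)$ for $p=\Phi^{k}\circ\iota\left(c_{i}\right)$. The bookkeeping you flag is the only content needed, so your write-up matches the paper's (unstated) proof.
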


It is immediate from our results that 
\[
\left\{ c_{i}^{k}\mid k\in\mathbb{Z},\:i\in\mathbb{N}\right\} 
\]
 generates all of $B_{\mathrm{com}}U\left(n\right)$ as an algebra.
That is, every class in $H^{*}\left(B_{\mathrm{com}}U\left(n\right)\right)$
can be written in the form 
\[
s=\sum_{j=1}^{m}\alpha_{j}C_{j}
\]
where $\alpha_{j}\in\mathbb{R}$ and 
\[
C_{j}=\prod_{t=1}^{m_{j}}c_{i_{j,t}}^{k_{j,t}},
\]
where $k_{j,t}\in\mathbb{Z}$ and $i_{j,t}\in\mathbb{N}$. Then it
follows that if $\xi$ is a TC structure with underlying vector bundle
$E\rightarrow M$, with curvature its $\Omega$ and $\Omega_{k}$
the curvature of the $k$-th associated bundle, then
\[
s\left(\xi\right)=\sum_{j=1}^{m}\alpha_{j}\left(\prod_{t=1}^{m_{j}}c_{i_{j,t}}\left(\Omega_{k_{j,t}}\right)\right)\in H^{*}\left(M\right).
\]

At this point it is worth mentioning that $H^{*}\left(B_{\mathrm{com}}G\right)$
is in general not a polynomial algebra. For example when $G=U\left(n\right)$,
the TC Chern classes are not algebraically independent. However, the
relationships governing them are rather complicated. As such, their
values in a given TC structure can vary significantly. We see an example
of this in the next chapter. 
\begin{rem}
The concepts developed in this section can also be applied to vector
bundles on the quaternions. In this case, the structural group is
the simplectic group $\mathrm{Sp}\left(n\right)$. The main ideas
we needed to developed TC characteristic classes also hold for this
group. As we saw in the previous section we also have power maps on
cohomology, and $H^{*}\left(B_{\mathrm{com}}\mathrm{Sp}\left(n\right),\mathbb{R}\right)$
is also generated by as an algebra by $\left\{ \Phi^{k}\circ\iota\left(c_{i}\right)\mid1\leq i\leq n,k\in\mathbb{Z}\setminus\left\{ 0\right\} \right\} $.
Where again $\iota:H^{*}\left(B\mathrm{Sp}\left(n\right),\mathbb{R}\right)\rightarrow H^{*}\left(B_{\mathrm{com}}\mathrm{Sp}\left(n\right),\mathbb{R}\right)$
is induced by the natural inclusion $B_{\mathrm{com}}\mathrm{Sp}\left(n\right)\rightarrow B\mathrm{Sp}\left(n\right)$.
Also, since $\mathrm{Sp}\left(n\right)$ is a compact group, the Chern-Weil
homomorphism is in fact an isomorphism. Thus, most of the ideas we
used through out this section apply to this case as well. 
\end{rem}

\medskip{}

\section{Example }

In this final section we exhibit explicit calculations of examples
using Chern-Weil theory to compute TC characteristic classes. In particular
we show there is a TC structure $\xi$ over a 4 sphere such that $c_{i}\left(\xi\right)=0$
for every $i\in\mathbb{N}$ while $c_{2}^{-1}\left(\xi\right)\neq0$.
This shows that a TC Chern class $c_{i}^{n}$ does not necessarily
determines another TC Chern class $c_{i}^{m}$, if $m\neq n$. This
confirms that the underlying vector bundle of a TC structure does
not determine completely the TC structure.

For this we develop the computations that allow us to obtain Chern
classes in terms of clutching functions, with $SU\left(2\right)$
as the structural group. This treatment is based on the concepts presented
in \cite{key-13}, Chapter 5. 

\subsection{Connection for a vector bundle with a two sets cover with trivializations: }

Let $\pi:E\rightarrow M$ denote a smooth vector bundle over $\mathbb{C}$
of dimension $n$, with $M$ a closed manifold. Assume we can find
an open cover $\left\{ U_{1},U_{2}\right\} $ of $M$ together with
trivializations 
\begin{align*}
\varphi_{i}:\pi^{-1}\left(U_{i}\right) & \rightarrow U_{i}\times\mathbb{C}^{n}\\
e & \mapsto\left(\pi\left(e\right),h_{i}\left(e\right)\right).
\end{align*}
Suppose these trivializations have structure group a Lie group of
matrices $G$. This is, we have a function $\rho:U_{1}\cap U_{2}\rightarrow G\subseteq GL_{n}\left(\mathbb{C}\right)$
characterized by
\begin{align*}
\varphi_{2}\circ\varphi_{1}^{-1}:U_{1}\cap U_{2}\times\mathbb{C}^{n} & \rightarrow U_{1}\cap U_{2}\times\mathbb{C}^{n}\\
\left(x,v\right) & \mapsto\left(x,\rho\left(x\right)v\right).
\end{align*}
 These trivializations induce smooth sections
\begin{align*}
s_{ij}:U_{i} & \rightarrow\pi^{-1}\left(U_{i}\right)\\
x & \mapsto\varphi_{j}^{-1}\left(x,e_{j}\right),
\end{align*}
where $e_{j}$ is the $j$-th vector of the standard basis of $\mathbb{C}^{n}$,
and $i=1,2$. This setting implies that for $x\in U_{i}$the set $\left\{ s_{i1}\left(x\right),s_{i2}\left(x\right),\ldots,s_{in}\left(x\right)\right\} \subset\pi^{-1}\left(x\right)$
is a basis. Under these conditions, for a point $x\in U_{1}\cap U_{2}$
it follows that 
\begin{equation}
s_{1k}\left(x\right)=\sum_{l=1}^{n}\rho_{lk}\left(x\right)s_{2l}\left(x\right)\label{eq:sections and cocycles}
\end{equation}
where we take $\rho=\left[\rho_{lk}\right]_{k,l=1}^{n}$. 

Now let $\left\{ f_{1},f_{2}\right\} $ be a partition of the unity
subordinated to $\left\{ U_{1},U_{2}\right\} $, as well as the trivial
connections over each $U_{i}$, $\nabla^{i}$. We can now define the
connection
\[
\nabla_{X}s:=f_{1}\nabla_{X}^{1}s+f_{2}\nabla_{X}^{2}s.
\]
This means that for a vector field $X$ and a section $s$, we consider
their restriction to $U_{i}$ in order to evaluate $\nabla_{X}^{i}$.
That is, we need first to consider the decomposition of $s$ in terms
of the basis $\left\{ s_{i1},\ldots,s_{in}\right\} $, which means
that there are smooth functions $\alpha_{i}^{j}:U_{i}\rightarrow\mathbb{C}$
such that for $x\in U_{i}$
\[
s\mid_{U_{i}}\left(x\right)=\sum_{j=1}^{n}\alpha_{j}^{i}\left(x\right)s_{ij}\left(x\right).
\]
Then, applying the product rule and the definition, we have 
\[
\nabla_{X}s:=\sum_{i,j}f_{i}X\left(\alpha_{j}^{i}\right)s_{ij}.
\]

Recall that with $n$-linearly independent sections $\left\{ s_{1},\ldots,s_{n}\right\} $
we have the local expressions for both the connection and the curvature,
$R:\mathfrak{X}\left(M\right)\times\mathfrak{X}\left(M\right)\rightarrow\Gamma\left(E\right)$.
There exists 1-forms $\omega_{ij}$ and two forms $\Omega_{ij}$ such
that we can write
\[
\nabla_{X}s_{i}=\sum_{i}\omega_{ij}\left(X\right)s_{j}
\]
and 
\[
R\left(X,Y\right)\left(s_{i}\right)=\sum_{j}\Omega_{ij}\left(X,Y\right)s_{j}
\]
which gives rise to the local connection and curvature matrices
\[
\omega:=\left[\omega_{ij}\right]\:\mathrm{and}\:\Omega:=\left[\Omega_{ij}\right].
\]
 These local forms are related to the transition function in the following
way. From Equality \ref{eq:sections and cocycles} we get that 
\[
\nabla_{X}s_{1k}=\sum_{l=1}^{n}f_{2}X\left(\rho_{lk}\left(x\right)\right)s_{2l}\left(x\right).
\]
From differential geometry we know that for a function $f:M\rightarrow\mathbb{R}$,
$X\left(f\right)=df\left(X\right)$ holds, where $d$ is the external
derivation. Thus we get the expression 
\[
\nabla_{X}s_{1k}=\sum_{l=1}^{n}f_{2}d\left(\rho_{lk}\left(x\right)\right)\left(X\right)s_{2l}\left(x\right),
\]
which allow us to write 
\[
\nabla s_{1k}=\sum_{l=1}^{n}f_{2}d\left(\rho_{lk}\left(x\right)\right)s_{2l}\left(x\right).
\]
 By the properties of cocycles we also know that 
\[
s_{2l}\left(x\right)=\sum_{t=1}^{n}\rho_{tl}^{-1}\left(x\right)s_{1t}\left(x\right),
\]
where by $\rho_{tl}^{-1}$we mean the components of the matrix $\rho^{-1}$.
Thus, we can write 
\[
\nabla s_{1k}=\sum_{t=1}^{n}\left(f_{2}\left(\sum_{l=1}^{n}\rho_{tl}^{-1}\left(x\right)d\left(\rho_{lk}\left(x\right)\right)\right)\right)s_{1t}.
\]
By comparing this expression with the local form, we conclude that
\begin{equation}
\omega^{1}=f_{2}\rho^{-1}d\rho.\label{eq:Conn. in terms of cocycles}
\end{equation}

Our next step is to obtain the local form of the curvature. For this
we use the structural equation (see \cite{key-13} Theorem 5.21.)
\[
\Omega^{i}=d\omega^{i}+\omega^{i}\wedge\omega^{i}.
\]
Consider the equality $\rho^{-1}\rho=I$. An application of the product
rule allow us to write:
\[
0=dI=d\left(\rho^{-1}\right)\rho+\rho^{-1}d\rho.
\]
This in turn implies that 
\[
d\left(\rho^{-1}\right)\rho=-\rho^{-1}d\rho\Rightarrow d\left(\rho^{-1}\right)=-\rho^{-1}\left(d\rho\right)\rho^{-1}.
\]
Since $dd=0$, we obtain $d\left(\rho^{-1}d\rho\right)=d\left(\rho^{-1}\right)\wedge d\left(\rho\right)$,
which allow us to conclude that 
\[
d\omega^{1}=\left(\left(df_{2}\right)\rho^{-1}d\rho-f_{2}\rho^{-1}d\rho\wedge\rho^{-1}d\rho\right).
\]

On the other hand 
\[
\omega^{1}\wedge\omega^{1}=\left(f_{2}\rho^{-1}d\rho\right)\wedge\left(f_{2}\rho^{-1}d\rho\right)=f_{2}^{2}\rho^{-1}d\rho\wedge\rho^{-1}d\rho,
\]
which finally gives us 
\begin{equation}
\Omega^{1}=\left(\left(df_{2}\right)\rho^{-1}d\rho-f_{2}\rho^{-1}d\rho\wedge\rho^{-1}d\rho\right)+f_{2}^{2}\rho^{-1}d\rho\wedge\rho^{-1}d\rho.\label{eq:Curvature local form}
\end{equation}
Observe that in a point $x\notin U_{1}\cap U_{2}$, $\Omega^{1}$
is zero since the closure of the support of $f_{2}$ is contained
in $U_{2}$. Similarly, an analogue formula can be deduce for the
local form of the curvature in $U_{2}$, and deduced that it is also
zero outside $U_{1}\cap U_{2}$. Thus, we can conclude that

\medskip{}

\begin{prop}
Let $\pi:E\rightarrow M$ be a smooth vector bundle with $\left\{ U_{1},U_{2}\right\} $
an open cover of $M$, both having trivializations of $E$, $\varphi_{1}$
and $\varphi_{2}$, respectively. Let $\left\{ f_{1},f_{2}\right\} $
be a partition of unity associated to $\left\{ U_{1},U_{2}\right\} $,
respectively. If $\rho$ is the transition function associated to
$\varphi_{2}\circ\varphi_{1}^{-1}$, then the curvature $\Omega_{k}$
of the $k$-th associated bundle is given by
\[
\left(\Omega_{k}\right)_{x}=\begin{cases}
\left(\Omega_{k}^{1}\right)_{x} & x\in U_{1}\cap U_{2}.\\
0 & x\notin U_{1}\cap U_{2}.
\end{cases}
\]
Where 
\begin{equation}
\Omega_{k}^{1}=\left(df_{2}\right)\rho^{-k}d\left(\rho^{k}\right)+\left(f_{2}^{2}-f_{2}\right)\rho^{-k}d\left(\rho^{k}\right)\wedge\rho^{-k}d\left(\rho^{k}\right)\label{eq:Curvature for 2 sets covers}
\end{equation}
is the local expression on $U_{1}$.
\end{prop}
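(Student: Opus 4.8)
The plan is to recognize that this is exactly the formula \eqref{eq:Curvature local form} already derived for $\Omega^1$, now specialized to the $k$-th associated bundle. By construction $E^k \to M$ is trivialized over the \emph{same} cover $\{U_1, U_2\}$, and its transition function is $\rho^k$ rather than $\rho$; moreover $\rho^k$ again takes values in the matrix group $G$ and satisfies the cocycle condition, as was checked when $E^k$ was defined. So I would equip $E^k$ with the connection $\nabla_X s := f_1 \nabla_X^1 s + f_2 \nabla_X^2 s$ coming from the same partition of unity $\{f_1, f_2\}$, and then re-run the computation that produced \eqref{eq:Curvature local form} with $\rho$ replaced by $\rho^k$ throughout.

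Concretely, I would check that the ingredients driving that earlier computation all survive the substitution. The frames of $E^k$ satisfy the analogue of \eqref{eq:sections and cocycles}, $s_{1k} = \sum_l (\rho^k)_{lk} s_{2l}$ on $U_1 \cap U_2$, so the local connection form becomes $\omega_k^1 = f_2\,(\rho^k)^{-1} d(\rho^k)$ in place of \eqref{eq:Conn. in terms of cocycles}. Differentiating $(\rho^k)^{-1}\rho^k = I$ yields $d((\rho^k)^{-1}) = -(\rho^k)^{-1}(d\rho^k)(\rho^k)^{-1}$, and with $dd = 0$ these are the only algebraic identities used to compute $d\omega_k^1$; the structural equation $\Omega_k^1 = d\omega_k^1 + \omega_k^1 \wedge \omega_k^1$ is purely formal. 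Using $(\rho^k)^{-1} = \rho^{-k}$ and collecting the $-f_2$ and $f_2^2$ contributions into $(f_2^2 - f_2)$ should then reproduce exactly \eqref{eq:Curvature for 2 sets covers}.

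The last point to settle is the case distinction, namely that $\Omega_k^1$ vanishes on $U_1 \setminus U_2$ and defines a smooth form on all of $U_1$ despite $\rho^k$ being defined only on $U_1 \cap U_2$. Here I would use that every summand carries a factor of $f_2$ or $df_2$: since $\{f_1, f_2\}$ is subordinate to $\{U_1, U_2\}$ we have $\mathrm{supp}(f_2) \subseteq U_2$, so at any $x \in U_1 \setminus U_2$ the function $f_2$ vanishes on a whole neighborhood, forcing both $f_2(x) = 0$ and $(df_2)_x = 0$; the $\rho^k$-dependent factors are therefore multiplied by zero and the expression extends by $0$. Running the symmetric argument in the $U_2$-chart, and recalling that $\{U_1, U_2\}$ covers $M$, gives the vanishing of the curvature on all of $M \setminus (U_1 \cap U_2)$.

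I expect the genuinely delicate point to be precisely this well-definedness and extension issue, together with the bookkeeping forced by the non-commutativity of matrix-valued forms: one must resist rewriting $\rho^{-k} d(\rho^k)$ as $k\rho^{-1} d\rho$, since $\rho$ and $d\rho$ need not commute, which is exactly why the statement keeps $d(\rho^k)$ intact rather than simplified. Everything else is a verbatim repetition of the derivation of \eqref{eq:Curvature local form}, so the proof is essentially immediate once the substitution is justified.
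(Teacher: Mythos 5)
Your proposal is correct and is essentially the paper's own proof: the paper likewise observes that $E^k$ is trivialized over the same cover $\{U_1,U_2\}$ with transition function $\rho^k$, and then invokes the preceding derivation of the local curvature formula with $\rho$ replaced by $\rho^k$. Your added care about the vanishing outside $U_1\cap U_2$ and about not rewriting $\rho^{-k}d(\rho^k)$ as $k\rho^{-1}d\rho$ only makes explicit what the paper leaves implicit.
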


\begin{proof}
Since the $k$-th associated vector bundle has the same cover associated
to its TC structure, with transition functions equal to $\rho^{k}$,
the previous discussion provides a proof of the theorem. 
\end{proof}
It is worth mentioning that it is possible to deduce a similar formula
to (\ref{eq:Curvature local form}) for a arbitrary number of sets
in an open cover, but we will not need this. 

\subsection{Second Chern class for clutching functions with values on $SU\left(2\right)$.}

Suppose that we have a vector bundle $p:E\rightarrow M$ in such a
way that we can find an open cover $\left\{ U_{1},U_{2}\right\} $
of $M$ together with a transition function $\rho:U_{1}\cap U_{2}\rightarrow SU\left(2\right)$.
First, we are going to compute the determinant of the curvature form
in terms of the components of the matrices in $SU\left(2\right)$,
\[
SU\left(2\right):=\left\{ \left[\begin{array}{cc}
z & -\bar{w}\\
w & \bar{z}
\end{array}\right]\mid\left|z\right|^{2}+\left|w\right|^{2}=1\right\} .
\]
So let us take 
\[
\rho=\left[\begin{array}{cc}
z & -\bar{w}\\
w & \bar{z}
\end{array}\right],
\]
for which we want to compute the curvature 
\[
\Omega=\left(df_{2}\right)\rho^{-1}d\left(\rho\right)+\left(f_{2}^{2}-f_{2}\right)\rho^{-1}d\left(\rho\right)\wedge\rho^{-1}d\left(\rho\right).
\]
 Since $z\bar{z}+w\bar{w}=1$, we get by differentiating that 
\[
0=\left(\bar{z}dz+\bar{w}dw\right)+\left(zd\bar{z}+wd\bar{w}\right)\Rightarrow zd\bar{z}+wd\bar{w}=-\left(\bar{z}dz+\bar{w}dw\right)
\]
and so we have 
\[
\tau:=\rho^{-1}d\rho=\left[\begin{array}{cc}
\bar{z}dz+\bar{w}dw & \bar{w}d\bar{z}-\bar{z}d\bar{w}\\
-wdz+zdw & -\left(\bar{z}dz+\bar{w}dw\right)
\end{array}\right].
\]
Now take $\theta:=\rho^{-1}d\rho\wedge\rho^{-1}d\rho$. Using that
$\tau_{22}=-\tau_{11}$ and $\left|z\right|^{2}+\left|w\right|^{2}=1$
we get that 
\[
\theta=\left[\begin{array}{cc}
\left(\bar{w}d\bar{z}-\bar{z}d\bar{w}\right)\wedge\left(-wdz+zdw\right) & 2d\bar{z}\wedge d\bar{w}\\
-2dz\wedge dw & -\left(\bar{w}d\bar{z}-\bar{z}d\bar{w}\right)\wedge\left(-wdz+zdw\right)
\end{array}\right].
\]

which is the same as expressing it as
\[
\theta=\left[\begin{array}{cc}
\tau_{12}\wedge\tau_{21} & \theta_{12}\\
\theta_{21} & -\tau_{12}\wedge\tau_{21}
\end{array}\right].
\]
Now, by making $f:=f_{2}$ and $g:=\left(f-1\right)f$ we may express
the curvature as
\[
\Omega=df\tau+g\theta=\left[\begin{array}{cc}
df\tau_{11}+g\tau_{12}\wedge\tau_{21} & df\tau_{12}+g\theta_{12}\\
df\tau_{21}+g\theta_{21} & -\left(df\tau_{11}+g\tau_{12}\wedge\tau_{21}\right)
\end{array}\right],
\]
and its determinant is then given by 
\begin{align*}
\mathrm{det}\left(\Omega\right)= & -\left(df\tau_{11}+g\tau_{12}\wedge\tau_{21}\right)\wedge\left(df\tau_{11}+g\tau_{12}\wedge\tau_{21}\right)\\
 & -\left(df\tau_{21}+g\theta_{21}\right)\wedge\left(df\tau_{12}+g\theta_{12}\right).
\end{align*}

In order to reduce this expression, we recall that the wedge product
of a one form with itself is zero. Also, one forms commute with two
forms, so we get: 
\[
\mathrm{det}\left(\Omega\right)=-g^{2}\theta_{12}\wedge\theta_{21}-gdf\wedge\left(\tau_{12}\wedge\theta_{21}+\tau_{21}\wedge\theta_{12}+2\tau_{11}\wedge\tau_{12}\wedge\tau_{21}\right).
\]

By recalling that $\tau_{11}\wedge\tau_{12}=d\bar{z}\wedge d\bar{w}$
we get: 
\[
\tau_{11}\wedge\tau_{12}\wedge\tau_{21}=-\left(wdzd\bar{z}d\bar{w}+zd\bar{z}dwd\bar{w}\right),
\]
\[
\tau_{12}\wedge\theta_{21}=2\left(\bar{z}dzdwd\bar{w}+\bar{w}dzd\bar{z}dw\right)
\]
and
\[
\tau_{21}\wedge\theta_{12}=-2\left(zd\bar{z}dwd\bar{w}+wdzd\bar{z}d\bar{w}\right).
\]
Now take
\[
A:=\tau_{12}\wedge\theta_{21}+\tau_{21}\wedge\theta_{12}+2\tau_{11}\wedge\tau_{12}\wedge\tau_{21}
\]
then 
\begin{align*}
A= & 2\left[\left(\bar{z}dzdwd\bar{w}+\bar{w}dzd\bar{z}dw\right)-\left(zd\bar{z}dwd\bar{w}+wdzd\bar{z}d\bar{w}\right)\right.\\
 & \left.-\left(wdzd\bar{z}d\bar{w}+zd\bar{z}dwd\bar{w}\right)\right]
\end{align*}

which gives us 
\[
A=2\left(\bar{z}dzdwd\bar{w}+\bar{w}dzd\bar{z}dw-2\left(zd\bar{z}dwd\bar{w}+wdzd\bar{z}d\bar{w}\right)\right)
\]
which we can now replace to have 
\begin{equation}
\det\left(\Omega\right)=4\left(f_{2}-1\right)^{2}f_{2}^{2}dzd\bar{z}dwd\bar{w}-\left(f_{2}-1\right)f_{2}df_{2}\wedge A.\label{eq:Det of the curvature for SU(2)}
\end{equation}

Now we are going to use this formula to find the second Chern class
in terms of a smooth Clutching function $\varphi:S^{3}\rightarrow SU\left(2\right)$
(see \cite{key-2}, Chapter 1). Consider the sets
\[
S^{4}=\left\{ \mathbf{x}=\left(x_{1},\ldots,x_{5}\right)\in\mathbb{R}^{5}\mid\left\Vert \mathbf{x}\right\Vert =1\right\} ,
\]
\[
D_{+}=\left\{ \left(x_{1},\ldots,x_{5}\right)\in S^{4}\mid x_{5}\geq0\right\} ,
\]
\[
D_{-}=\left\{ \left(x_{1},\ldots,x_{5}\right)\in S^{4}\mid x_{5}\leq0\right\} 
\]
and the open set 
\begin{align*}
V= & \left\{ \left(x_{1},\ldots,x_{5}\right)\in S^{4}\mid\right.\\
 & \left.-1/3<x_{5}<1/3\right\} .
\end{align*}
Also let 
\[
U_{1}:=D_{+}\cup V\:\mathrm{and}\:U_{2}:=D_{-}\cup V
\]
 and identify $S^{3}$ with the equator $\left\{ \left(x_{1},\ldots,x_{5}\right)\in S^{4}\mid x_{5}=0\right\} $. 

Using \textquotedbl bump\textquotedbl{} functions we can obtain a
partition of the unity $f_{1},f_{2}:S^{4}\rightarrow\left[0,1\right]$
such that they depend only on the \textquotedbl height\textquotedbl{}
$x_{5}$ and $f_{i}\mid_{U_{i}\setminus V}\equiv1$. Also the clutching
function $\varphi:S^{3}\rightarrow SU\left(2\right)$ can be composed
with a smooth \textquotedbl perpendicular\textquotedbl{} retraction
of $V$ to $S^{3}$, to obtain a transition function $\rho:V\rightarrow SU\left(2\right)$
independent of $x_{5}$.

Under this conditions is clear that
\begin{itemize}
\item $df_{2}=\frac{\partial f_{2}}{\partial r}dr$, and
\item If 
\[
\rho=\left[\begin{array}{cc}
z & -\bar{w}\\
w & \bar{z}
\end{array}\right]
\]
any four form depending on $z$,$\bar{z}$, $w$ and $\bar{w}$ is
zero, since these functions depend only on three variables. 
\end{itemize}
We are in position to apply the previous results to obtain that 
\[
\det\left(\Omega\right)=4\left(f_{2}-1\right)^{2}f_{2}^{2}dzd\bar{z}dwd\bar{w}-\left(f_{2}-1\right)f_{2}df_{2}\wedge A.
\]
Where $A=2\left(\bar{z}dzdwd\bar{w}+\bar{w}dzd\bar{z}dw-2\left(zd\bar{z}dwd\bar{w}+wdzd\bar{z}d\bar{w}\right)\right)$.
However, by construction we have that $dzd\bar{z}dwd\bar{w}=0$ and
so 
\[
\int_{S^{4}}\det\left(\Omega\right)=\left(\int_{-1}^{1}\left(\left(1-f_{2}\right)f_{2}\frac{\partial f_{2}}{\partial r}\right)dr\right)\int_{S^{3}}A.
\]
First, notice that by construction it follows that 
\[
\int_{-1}^{1}\left(\left(1-f_{2}\right)f_{2}\frac{\partial f_{2}}{\partial r}\right)dr=-\frac{1}{6}.
\]
Finally since the second Chern class in this case is the determinant
of the curvature times $\left(\frac{i}{2\pi}\right)^{2}$, we get

\medskip{}

\begin{prop}
The second Chern class associated to a clutching function $\varphi:S^{3}\rightarrow SU\left(2\right)$
is given by 
\[
c_{2}=\frac{1}{24\pi^{2}}\int_{S^{3}}A.
\]
Here $A$ is a 3-form given by 
\[
2\left(\bar{z}dzdwd\bar{w}+\bar{w}dzd\bar{z}dw-2\left(zd\bar{z}dwd\bar{w}+wdzd\bar{z}d\bar{w}\right)\right)
\]
and the functions $z,w:S^{3}\rightarrow SU\left(2\right)$ are determined
by the clutching function, $\varphi=\left[\begin{array}{cc}
z & -\bar{w}\\
w & \bar{z}
\end{array}\right].$
\end{prop}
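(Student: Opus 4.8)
The plan is to assemble the pieces already established and turn them into the stated integral. Recall from Chern--Weil theory that for a rank-two bundle with curvature $\Omega$ the second Chern class is represented by the $4$-form $\det\!\left(\frac{i}{2\pi}\Omega\right)=\left(\frac{i}{2\pi}\right)^{2}\det(\Omega)$, so the second Chern number is obtained by integrating this form over $S^{4}$. Thus the whole task reduces to evaluating $\int_{S^{4}}\det(\Omega)$ for the curvature produced by the clutching function $\varphi$, and then multiplying by $\left(\frac{i}{2\pi}\right)^{2}=-\frac{1}{4\pi^{2}}$. For this I would start from the expression for $\det(\Omega)$ displayed just above,
\[
\det(\Omega)=4(f_{2}-1)^{2}f_{2}^{2}\,dz\,d\bar{z}\,dw\,d\bar{w}-(f_{2}-1)f_{2}\,df_{2}\wedge A,
\]
with $A$ the explicit $3$-form in the statement.

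The first simplification is that the leading term vanishes identically on $S^{4}$: by construction the transition function $\rho$, hence the functions $z,\bar z,w,\bar w$, is the pullback under the perpendicular retraction of data on the equatorial $S^{3}$, so it depends on only three real coordinates and any $4$-form in $dz,d\bar z,dw,d\bar w$ is zero. What remains is $\det(\Omega)=-(f_{2}-1)f_{2}\,df_{2}\wedge A$, and this form is supported in the collar $V\cong S^{3}\times(-1/3,1/3)$, since $U_{1}\cap U_{2}=V$ and off $V$ the function $f_{2}$ is locally constant, so $df_{2}=0$. On $V$ the function $f_{2}$ depends only on the height $r=x_{5}$, whence $df_{2}=\frac{\partial f_{2}}{\partial r}\,dr$, while $A$ is a pullback of a $3$-form from $S^{3}$ and so carries no $dr$-component. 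I expect this product decomposition to be the one point that needs genuine care: once one checks that $A$ has no radial differential and that on the collar the integrand factors as $(\text{function of }r)\,dr\wedge(\text{pullback from }S^{3})$, Fubini applies and yields
\[
\int_{S^{4}}\det(\Omega)=\left(\int_{-1}^{1}(1-f_{2})f_{2}\,\frac{\partial f_{2}}{\partial r}\,dr\right)\int_{S^{3}}A,
\]
where the sign $-(f_{2}-1)=(1-f_{2})$ has been absorbed and the orientation of $V=S^{3}\times I$ is taken as in the line above.

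To finish I would evaluate the one-dimensional integral by the substitution $u=f_{2}$, using that $f_{2}$ decreases from $1$ to $0$ as $r$ runs from $-1$ to $1$, so that
\[
\int_{-1}^{1}(1-f_{2})f_{2}\,\frac{\partial f_{2}}{\partial r}\,dr=\int_{1}^{0}(1-u)u\,du=-\int_{0}^{1}(u-u^{2})\,du=-\frac{1}{6}.
\]
Combining the constants then gives
\[
\int_{S^{4}}\left(\frac{i}{2\pi}\right)^{2}\det(\Omega)=-\frac{1}{4\pi^{2}}\cdot\left(-\frac{1}{6}\right)\int_{S^{3}}A=\frac{1}{24\pi^{2}}\int_{S^{3}}A,
\]
which is the claimed value of $c_{2}$. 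The only real subtlety, beyond the bookkeeping of the constant $\left(\frac{i}{2\pi}\right)^{2}$ and of signs, is the reduction to the collar together with the Fubini splitting; everything else is the substitution and the arithmetic of the constants.
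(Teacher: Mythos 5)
Your proposal is correct and follows essentially the same route as the paper: it starts from the displayed formula for $\det(\Omega)$, kills the $dz\,d\bar z\,dw\,d\bar w$ term because $z,w$ are pulled back from the equatorial $S^{3}$, reduces to the collar where $df_{2}=\frac{\partial f_{2}}{\partial r}dr$, applies Fubini to get $\bigl(\int_{-1}^{1}(1-f_{2})f_{2}\frac{\partial f_{2}}{\partial r}dr\bigr)\int_{S^{3}}A=-\frac{1}{6}\int_{S^{3}}A$, and multiplies by $\left(\frac{i}{2\pi}\right)^{2}=-\frac{1}{4\pi^{2}}$. The only difference is cosmetic: you justify the collar support and evaluate the one-dimensional integral by explicit substitution, where the paper simply asserts these facts "by construction."
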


\subsection{A non trivial TC structure over a trivial vector bundle }

It is already known that there are trivial vector bundles with non
trivial TC structures over them. In this section we are going to use
such a structure to show that: 
\begin{thm}
There exists a TC structure 
\[
\xi=\left\{ E\rightarrow S^{4},\left\{ U_{1},U_{2},U_{3}\right\} ,\rho_{ij}:U_{i}\cap U_{j}\rightarrow SU\left(2\right)\right\} 
\]
 such that $E\rightarrow S^{4}$ is a trivial bundle, and such that
$c_{2}^{-1}\left(\xi\right)=-1$, implying that the TC structure is
non trivial.
\end{thm}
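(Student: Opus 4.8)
The plan is to take for $\xi$ the explicit transitionally commutative structure of Ramras and Villareal (\cite{key-15}) and to compute its invariant $c_2^{-1}(\xi)$ directly from the curvature. By the Chern--Weil theory for TC structures developed above we have $c_2^{-1}(\xi) = c_2(\Omega_{-1})$, where $\Omega_{-1}$ is the curvature of the $(-1)$-th associated bundle $E^{-1}$, whose transition functions are the inverses $\rho_{ij}^{-1}$. By construction the underlying bundle $E \to S^4$ is trivial, so that $c_2^{1}(\xi) = c_2(E) = 0$; the whole point of the theorem is that inverting the cocycle changes the isomorphism type, producing $c_2(\Omega_{-1}) = -1$. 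Once this is shown, $\xi$ is forced to be non-trivial: equivalent TC structures induce the same TC characteristic classes, so a trivial lift would give $c_2^{-1}(\xi) = 0$.

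First I would record the Ramras--Villareal datum explicitly as a cover $\{U_1, U_2, U_3\}$ of $S^4$ together with smooth maps $\rho_{ij} : U_i \cap U_j \to SU(2)$ whose values commute on each triple overlap, and verify the two facts needed downstream: that the $\rho_{ij}$ form a cocycle (so that the $\rho_{ij}^{-1}$ do as well, exactly as in the definition of the $k$-th associated bundle), and that the resulting $E$ is trivial. The triviality of $E$ is read off from the fact that its classifying map $\iota \circ f = g : S^4 \to BSU(2)$ is null-homotopic, equivalently that the effective clutching datum assembled from the $\rho_{ij}$ represents $0$ in $\pi_3(SU(2)) \cong \mathbb{Z}$. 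It is essential here that the $\rho_{ij}$ do not all take values in one fixed maximal torus -- only the \emph{pointwise} commutativity on triple overlaps is assumed -- since a single-torus cocycle would make $E^{-1}$ trivial as well.

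Next I would reduce the computation of $c_2(\Omega_{-1})$ to the clutching-function integral of the preceding proposition. Although that formula was derived for a two-set cover, the bundle $E^{-1}$, being an ordinary $SU(2)$-bundle over $S^4$, is classified by a single clutching function $\varphi_{-1} : S^3 \to SU(2)$ on the equator; I would extract $\varphi_{-1}$ from the inverted cocycle $\{\rho_{ij}^{-1}\}$ by retracting the overlaps onto $S^3$, and choose a subordinate partition of unity so that $\Omega_{-1}$ is supported in a collar of $S^3$. The integral then localizes and the proposition applies: writing
\[
\varphi_{-1} = \left[\begin{array}{cc} z & -\bar{w} \\ w & \bar{z} \end{array}\right],
\]
one evaluates
\[
c_2(\Omega_{-1}) = \frac{1}{24\pi^2}\int_{S^3} A, \qquad A = 2\left(\bar{z}\,dz\,dw\,d\bar{w} + \bar{w}\,dz\,d\bar{z}\,dw - 2\left(z\,d\bar{z}\,dw\,d\bar{w} + w\,dz\,d\bar{z}\,d\bar{w}\right)\right),
\]
and checks that the class of $\varphi_{-1}$ in $\pi_3(SU(2))$ is $-1$, so that the integral yields $c_2(\Omega_{-1}) = -1$.

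The main obstacle is the reduction step together with the identification of $\varphi_{-1}$: one must pass from the genuinely three-set, non-torus-valued commuting cocycle to a clean clutching function for the \emph{inverted} bundle, and see precisely why inverting the cocycle of a trivial bundle produces a degree $-1$ clutching function rather than a null-homotopic one. This is where the variation of the commuting tori across the triple overlaps enters, and is exactly the phenomenon that the power map $\Phi^{-1}$ detects on $H^{*}\left(B_{\mathrm{com}}SU(2)\right)$. Having obtained $c_2^{-1}(\xi) = -1 \neq 0 = c_2^{1}(\xi)$, we conclude both that $\xi$ is a non-trivial TC structure and that its two TC Chern classes $c_2^{1}$ and $c_2^{-1}$ disagree, so the underlying bundle does not determine the TC structure.
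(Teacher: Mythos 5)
Your outline reproduces the paper's strategy faithfully: use the Ramras--Villareal three-set commuting cocycle on $S^{4}$, show the underlying bundle $E$ is trivial by exhibiting a null-homotopic clutching function, pass to the inverted cocycle $\left\{ \rho_{ij}^{-1}\right\} $ for $E^{-1}$, and evaluate $c_{2}\left(\Omega_{-1}\right)$ via the formula $c_{2}=\frac{1}{24\pi^{2}}\int_{S^{3}}A$ proved for $SU\left(2\right)$ clutching functions. However, there is a genuine gap, and it sits exactly where you flag ``the main obstacle'': the theorem is an existence statement, and your argument never exhibits the object whose existence is claimed. You never write down concrete maps $\rho_{1},\rho_{2}:D_{3}\rightarrow SU\left(2\right)$ that commute on the required region but not everywhere, and the step ``one checks that the class of $\varphi_{-1}$ in $\pi_{3}\left(SU\left(2\right)\right)$ is $-1$'' is precisely the assertion to be proved, not a verification one can defer. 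Indeed, for many admissible choices of commuting data (for instance $\rho_{1},\rho_{2}$ valued in a common maximal torus, or commuting everywhere on $D_{3}$) the inverted clutching function is also null-homotopic and $c_{2}^{-1}\left(\xi\right)=0$; so without a specific example and a computation, nothing forces the answer $-1$.

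The paper closes this gap in two concrete steps that your proposal lacks. First, it makes the mechanism explicit: with $\rho_{12}=\rho_{1}\circ r$, $\rho_{23}=\rho_{2}\circ r$, $\rho_{13}=\left(\rho_{1}\circ r\right)\left(\rho_{2}\circ r\right)$, Lemma 3.1 of Ramras--Villareal gives for $E$ the clutching function equal to $\rho_{1}\rho_{2}$ on \emph{both} hemispheres (hence extendable over $D_{-}$ and null-homotopic, so $E$ is trivial), whereas inverting the cocycle reverses the product, $\rho_{13}^{-1}=\rho_{23}^{-1}\rho_{12}^{-1}$, so the clutching function of $E^{-1}$ equals $\rho_{1}^{-1}\rho_{2}^{-1}$ on one hemisphere and $\rho_{2}^{-1}\rho_{1}^{-1}$ on the other; this order mismatch is the only obstruction to extending it, and it is nonvacuous exactly when $\rho_{1},\rho_{2}$ fail to commute away from the overlap region. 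Second, the paper exhibits explicit formulas for $\rho_{1},\rho_{2}$ in spherical coordinates on $D_{3}$, assembles the resulting $\left(z,w\right)$ components on each hemisphere (noting $z_{1}=z_{2}$, $w_{1}=\bar{w}_{2}$), and evaluates the integral $\frac{1}{24\pi^{2}}\int_{S^{3}}A$, obtaining $-1$. Without some version of both steps --- an explicit pair $\rho_{1},\rho_{2}$ and an actual evaluation (or at least a degree argument for the specific $\varphi_{-1}$) --- your text is a correct plan of attack but not a proof.
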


This in particular highlights how the TC characteristic classes depends
on the TC structure and not on the equivalence class of their underlying
bundle. 

Now, to prove this theorem we are based on the construction made by
D. Ramras and B. Villareal (\cite{key-15}, Chapter 3). In what follows,
we first define the vector bundle by defining an open cover on $S^{4}$
and transition functions on them. This defines a TC structure 
\[
\xi=\left\{ E\rightarrow S^{4},\left\{ U_{1},U_{2},U_{3}\right\} ,\rho_{ij}:U_{i}\cap U_{j}\rightarrow SU\left(2\right)\right\} .
\]
Then by considering the $\left(-1\right)$-powers of these transition
functions we also obtain the $\left(-1\right)$-th associated bundle,
$E^{-1}$.

Next we are going to use Lemma 3.1 of \cite{key-15} to show that
both vector bundles obtained can be described, up to isomorphism,
by a given clutching functions. Then on one hand by showing that the
clutching function associated to $E$ is trivial, we conclude that
$E$ is trivial. On the other hand, we use the clutching function
associated to $E^{-1}$ together with the formulas of the previous
sections, to conclude that $c_{2}^{-1}\left(\xi\right)=-1$.

We outline how their initial construction can be made in the smooth
category, which allows us to reduce the problem of computing the Chern
class by using Clutching functions. 

We are constructing a TC structure on a vector bundle defined over
$S^{4}$ in terms of a triple open cover $\left\{ U_{1},U_{2},U_{3}\right\} $
and transition functions between them. These transition functions
themselves will be described in terms of two functions 
\[
\rho_{1},\rho_{2}:D_{3}\rightarrow SU\left(2\right),
\]
where $D_{3}$ is the 3-dimensional closed disk of radius 1. 

For this, take 
\[
S^{4}=\left\{ \mathbf{x}=\left(x_{1},\ldots,x_{5}\right)\in\mathbb{R}^{5}\mid\left\Vert \mathbf{x}\right\Vert =1\right\} 
\]
and for $1/5>\epsilon>0$ consider the triple open cover 
\[
U_{1}:=\left\{ \left(x_{1},\ldots,x_{5}\right)\in S^{4}\mid x_{5}>-\epsilon\right\} ,
\]
\[
U_{2}:=\left\{ \left(x_{1},\ldots,x_{5}\right)\in S^{4}\mid x_{5}<0,x_{4}>-\epsilon\right\} 
\]
and
\[
U_{3}:=\left\{ \left(x_{1},\ldots,x_{5}\right)\in S^{4}\mid x_{5}<0,x_{4}<\epsilon\right\} .
\]
Also call $D_{-}=\left\{ \left(x_{1},\ldots,x_{5}\right)\in S^{4}\mid x_{5}\leq0\right\} $
and identify the closed 3-dimensional disk with
\[
D_{3}=\left\{ \left(x_{1},\ldots,x_{5}\right)\in S^{4}\mid x_{5}\leq0,x_{4}=0\right\} .
\]
There is a natural retraction $r:D_{-}\rightarrow D_{3}$ leaving
$D_{3}$ fixed (See Figure \ref{fig:Retraction}). This is a smooth
function almost everywhere.

\begin{figure}
\includegraphics[scale=0.4]{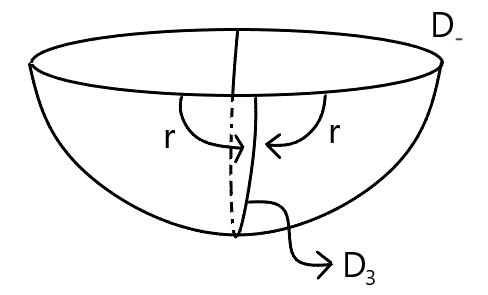}

\caption{\label{fig:Retraction}Retraction $r:D_{-}\rightarrow D_{3}$}
\end{figure}

Take $V=D_{3}\cap U_{1}$. Then we get that 
\[
V=\left\{ \left(x_{1},\ldots,x_{5}\right)\in D_{3}\mid x_{5}>-1/3\right\} .
\]
 Now suppose that the functions $\rho_{1},\rho_{2}:D_{3}\rightarrow SU\left(2\right)$
are smooth functions such that:
\begin{itemize}
\item They are independent of the radius in $D_{3}$ in $V$.
\item They are commutative in the closure of $V$. 
\end{itemize}
We define the transition functions $\rho_{ij}:U_{i}\cap U_{j}\rightarrow SU\left(2\right)$
by
\begin{itemize}
\item $\rho_{12}:=\rho_{1}\circ r.$
\item $\rho_{23}:=\rho_{2}\circ r.$
\item $\rho_{13}:=\left(\rho_{1}\circ r\right)\left(\rho_{2}\circ r\right).$
\end{itemize}
Since $r\left(U_{1}\cap U_{2}\cap U_{3}\right)\subseteq V$ by construction,
the previous cocycles commute with each other in their common domain
$U_{1}\cap U_{2}\cap U_{3}$. This transition functions allow us to
construct a smooth vector bundle $E\rightarrow S^{4}$, and so we
have constructed a TC structure 
\[
\xi=\left\{ E\rightarrow S^{4},\left\{ U_{1},U_{2},U_{3}\right\} ,\rho_{ij}:U_{i}\cap U_{j}\rightarrow SU\left(2\right)\right\} .
\]

\subsection{Associated clutching functions:}

Before dealing with the result we need, it is important to highlight
the following. Suppose $E_{1}\rightarrow M$ and $E_{2}\rightarrow M$
are smooth vector bundles with classifying functions $f_{i}:M\rightarrow BSU\left(n\right)$,
$i=1,2$. If there is a (non necessarily continous) homotopy between
$f_{1}$ and $f_{2}$, and there is class $c\in H^{*}\left(BSU\left(n\right)\right)$,
it follows that $f_{1}^{*}\left(c\right)=f_{2}^{*}\left(c\right)\in H^{*}\left(M\right)$.
Now consider the curvatures $\Omega_{1}$ and $\Omega_{2}$ for $E_{1}$
and $E_{2}$, respectively. By the Chern-Weil isomorphism, we get
that $c\left(\Omega_{1}\right)=f_{1}^{*}\left(c\right)$ and $c\left(\Omega_{2}\right)=f_{2}^{*}\left(c\right)$,
and thus $c\left(\Omega_{1}\right)=c\left(\Omega_{2}\right)$. In
particular if there is a continuous (but not smooth) isomorphism of
vector bundles between $E_{1}$ and $E_{2}$, their classifying functions
will be homotopic and their characteristic classes will coincide. 

Now consider the closed sets
\[
C_{1}:=\left\{ \left(x_{1},\ldots,x_{5}\right)\in S^{4}\mid x_{5}\geq0\right\} ,
\]
\[
C_{2}:=\left\{ \left(x_{1},\ldots,x_{5}\right)\in S^{4}\mid x_{5}\leq0,x_{4}\geq0\right\} 
\]
and
\[
C_{2}:=\left\{ \left(x_{1},\ldots,x_{5}\right)\in S^{4}\mid x_{5}\leq0,x_{4}\leq0\right\} .
\]

It is clear that there is a retraction $r_{i}:U_{i}\rightarrow C_{i}$
leaving $C_{i}$fixed, for $i=1,2,3$. Notice that by applying on
$U_{2}\cap U_{3}$ $r_{2}$ first and then $r_{3}$, we obtain a retraction
$r_{23}:U_{2}\cap U_{3}\rightarrow C_{2}\cap C_{3}$ leaving $C_{2}\cap C_{3}$
fixed. For $U_{1}\cap U_{2}$ we apply first $r_{2}$ and then $r_{3}$,
we obtain a retraction $r_{12}:U_{1}\cap U_{2}\rightarrow C_{1}\cap C_{2}$
leaving $C_{1}\cap C_{2}$ fixed, and similarly we obtain $r_{13}:U_{1}\cap U_{3}\rightarrow C_{1}\cap C_{3}$
leaving $C_{1}\cap C_{3}$ fixed. Via this restrictions of $\rho_{ij}$
we obtain transition functions for the closed cover $\left\{ C_{1},C_{2},C_{3}\right\} $:
\[
\tilde{\rho}_{ij}:C_{i}\cap C_{j}\rightarrow SU\left(2\right).
\]
This new transition functions are clearly homotopic to $\rho_{ij}$
via the retractions $r_{ij}$. Thus, they characterized vector bundles
over $S^{4}$ whose classifying functions are homotopic. 

Consider the identification $S^{3}\cong\left\{ \left(x_{1},\ldots,x_{5}\right)\in S^{4}\mid x_{5}=0\right\} $.
This setting allow us to apply Lemma 3.1 of \cite{key-15}. There
they show that the bundle induced by these three cocycles is isomorphic
to the vector bundle with clutching function $\varphi:S^{3}\rightarrow SU\left(2\right)$
defined for $\mathbf{x}=\left(x_{1},\ldots,x_{5}\right)$ by 
\[
\varphi\left(\mathbf{x}\right):=\begin{cases}
\rho_{1}\left(r\left(\mathbf{x}\right)\right)\rho_{2}\left(r\left(\mathbf{x}\right)\right) & x_{4}\geq0.\\
\rho_{1}\left(r\left(\mathbf{x}\right)\right)\rho_{2}\left(r\left(\mathbf{x}\right)\right) & x_{4}\leq0.
\end{cases}
\]
The function $\varphi$ can clearly be extended continuously to the
whole disk $D_{-}$, since we defined $r$ on $D_{-}$. This implies
that $\varphi$ is null homotopic, and thus, the vector bundle given
by these cocycles is trivial. 

Now lets consider the same construction but using the cocycles given
by $\sigma_{ij}=\rho_{ij}^{-1}$. They give rise to the $\left(-1\right)$-th
associated bundle by definition. Once again allow us to use Lemma
3.1 of \cite{key-15}. We conclude that this bundle can be obtain,
up to isomorphims, by the clutching function given by
\[
\phi\left(y\right):=\begin{cases}
\rho_{1}^{-1}\left(r\left(\mathbf{x}\right)\right)\rho_{2}^{-1}\left(r\left(\mathbf{x}\right)\right) & x_{4}\geq0.\\
\rho_{2}^{-1}\left(r\left(\mathbf{x}\right)\right)\rho_{1}^{-1}\left(r\left(\mathbf{x}\right)\right) & x_{4}\leq0.
\end{cases}
\]
In this case this function cannot be extended continuously to $D_{-}$
if $\rho_{1}$ and $\rho_{2}$ do not commute everywhere in $D_{3}$.
So $\phi$ is not necessarily null homotopic. 

\subsection{Existence of a non trivial TC structure:}

From the previous part, we need to show that it is possible to obtain
a non null homotopic clutching function $\phi$. For this it is enought
to display two functions $\rho_{1},\rho_{2}:D_{3}\rightarrow SU\left(2\right)$
such that they commute in $\partial D_{3}\cong S^{3}$, giving us
a non zero Chern class for the bundle with clutching function $\phi:S^{3}\rightarrow SU\left(2\right)$. 

We can describe $\phi$ in terms of the northern and southern hemispheres
of $S^{3}$, $D_{+}$ and $D_{-}$, respectively. Each of them can
be identify with the 3-dimensional disc $D_{3}$. Then we get that
\[
\phi\left(y\right):=\begin{cases}
\rho_{1}^{-1}\rho_{2}^{-1} & \mathrm{in}\,D_{+},\\
\rho_{2}^{-1}\rho_{1}^{-1} & \mathrm{in}\,D_{-}.
\end{cases}
\]
 For brevity allow us to write the matrices of $SU\left(2\right)$
as 
\[
\left(a,b\right):=\left[\begin{array}{cc}
a & -\bar{b}\\
b & \bar{a}
\end{array}\right].
\]

\medskip{}

\begin{prop}
Consider $D_{3}$ under spherical coordinates and take 
\[
\rho_{1}\left(\alpha,\beta,r\right):=\begin{cases}
\left(\sin\left(\frac{\pi}{2}r\right)e^{i\alpha},\cos\left(\frac{\pi}{2}r\right)\right), & 0\leq\beta\leq\pi/2.\\
\left(\sin\left(r\beta\right)e^{i\alpha},\cos\left(r\beta\right)\right) & \pi/2\leq\beta\leq\pi.
\end{cases}
\]
\[
\rho_{2}\left(\alpha,\beta,r\right):=\begin{cases}
\left(-\cos\left(\pi r\right)e^{2i\beta},\sin\left(\pi r\right)\right), & 0\leq\beta\leq\pi/2.\\
\left(\cos\left(\pi r\right),\sin\left(\pi r\right)\right) & \pi/2\leq\beta\leq\pi.
\end{cases}
\]
then the second Chern class of $\phi$ is $c_{2}\left(\phi\right)=-1$.
\end{prop}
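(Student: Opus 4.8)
The plan is to feed the clutching function $\phi$ into the formula
\[
c_{2}(\phi)=\frac{1}{24\pi^{2}}\int_{S^{3}}A
\]
established in the preceding proposition, where $A$ is the explicit $3$-form in the entries $z,w$ of $\phi=(z,w)$. Before applying it, however, I would first check that the data genuinely define a continuous clutching function. Writing elements of $SU(2)$ in the notation $(a,b)$ introduced above, the inverse is $(a,b)^{-1}=(\bar a,-b)$ and the product is $(a_{1},b_{1})(a_{2},b_{2})=(a_{1}a_{2}-\bar b_{1}b_{2},\,b_{1}a_{2}+\bar a_{1}b_{2})$. Using these I would verify that (i) the two branches of $\rho_{1}$ (and of $\rho_{2}$) agree along $\beta=\pi/2$, so each $\rho_{i}\colon D_{3}\to SU(2)$ is well defined and smooth away from the coordinate poles; and (ii) on $\partial D_{3}$, i.e.\ at $r=1$, the matrices $\rho_{1},\rho_{2}$ commute. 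Indeed for $0\le\beta\le\pi/2$ both become diagonal at $r=1$, while for $\pi/2\le\beta\le\pi$ one gets $\rho_{2}=-I$, which is central; hence $\rho_{1}^{-1}\rho_{2}^{-1}=\rho_{2}^{-1}\rho_{1}^{-1}$ on the common equator $S^{2}=\partial D_{+}=\partial D_{-}$, so $\phi$ is a well-defined continuous map $S^{3}\to SU(2)$ and the hypotheses of the formula hold.

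Next I would write $\phi=(z,w)$ explicitly on each hemisphere. On $D_{+}$, where $\phi=\rho_{1}^{-1}\rho_{2}^{-1}=(\rho_{2}\rho_{1})^{-1}$, the inverse and product formulas produce $z$ and $w$ as piecewise trigonometric functions of $(\alpha,\beta,r)$; on $D_{-}$, where $\phi=\rho_{2}^{-1}\rho_{1}^{-1}$, the very same expressions appear with the two factors interchanged. Differentiating gives $dz,d\bar z,dw,d\bar w$, and substituting into
\[
A=2\bigl(\bar z\,dz\,dw\,d\bar w+\bar w\,dz\,d\bar z\,dw-2(z\,d\bar z\,dw\,d\bar w+w\,dz\,d\bar z\,d\bar w)\bigr)
\]
collapses $A$ to a scalar multiple of the coordinate volume form $d\alpha\,d\beta\,dr$ on each region, after using $|z|^{2}+|w|^{2}=1$ and the vanishing of any repeated one-form.

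Finally I would evaluate $\int_{S^{3}}A=\int_{D_{+}}A+\int_{D_{-}}A$ as iterated integrals over $(\alpha,\beta,r)\in[0,2\pi]\times[0,\pi]\times[0,1]$, handled separately on the two $\beta$-ranges, and divide by $24\pi^{2}$ to reach $c_{2}(\phi)=-1$. The main obstacle is precisely this bookkeeping: carrying out the piecewise differentiation, simplifying $A$ to its top-degree part, and—most delicately—tracking the relative orientation of the two hemispheres so that their contributions reinforce rather than cancel. A useful sanity check, which I would keep in mind throughout, is the conceptual fact that $A/(24\pi^{2})$ is, up to sign, the pullback of the normalized bi-invariant volume form of $SU(2)\cong S^{3}$, so the integral computes the signed degree of $\phi$; the functions $\rho_{1},\rho_{2}$ are engineered so that $\phi$ sweeps $SU(2)$ exactly once, with the orientation yielding degree $-1$.
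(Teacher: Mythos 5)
Your overall strategy coincides with the paper's: apply the formula $c_{2}(\phi)=\frac{1}{24\pi^{2}}\int_{S^{3}}A$ from the preceding proposition, split the integral over the two hemispheres, write $\phi=(z,w)$ explicitly on each from the inverse and product formulas in $SU(2)$, and integrate. Your preliminary verifications are correct, and indeed more careful than what the paper records: the two branches of each $\rho_{i}$ do agree along $\beta=\pi/2$, and at $r=1$ both matrices are diagonal (for $0\leq\beta\leq\pi/2$) while $\rho_{2}=-I$ is central (for $\pi/2\leq\beta\leq\pi$), so the two hemisphere formulas for $\phi$ match on the equator and $\phi$ is a genuine clutching function.

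The genuine gap is that the computation which produces the number $-1$ is never performed; it is only announced. You yourself flag ``tracking the relative orientation of the two hemispheres'' as the most delicate point and then leave it unresolved, and you never reduce $A$ to an expression that could actually be integrated. This is exactly where the paper does its work: writing $(z_{1},w_{1})=\rho_{1}^{-1}\rho_{2}^{-1}$ and $(z_{2},w_{2})=\rho_{2}^{-1}\rho_{1}^{-1}$, the orientation reversal gives $\int_{S^{3}}A=\int_{D_{3}}\left(A_{2}-A_{1}\right)$; the key observation that $z_{1}=z_{2}$ and $w_{1}=\bar{w}_{2}$ (valid because the second entries of $\rho_{1}$ and $\rho_{2}$ are real-valued) then collapses $A_{2}-A_{1}$ to $24\left(J_{1}+J_{2}\right)$ in real coordinates, and the remaining explicit integral $\int\left(J_{1}+J_{2}\right)=-\pi^{2}$ is evaluated (with software) to yield $c_{2}(\phi)=-1$. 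Without some version of this reduction and an actual evaluation, your argument contains no derivation of the value. Moreover, the degree-theoretic ``sanity check'' cannot fill the hole: the claim that $\rho_{1},\rho_{2}$ are engineered so that $\phi$ sweeps $SU(2)$ exactly once with degree $-1$ is precisely the content of the proposition, so invoking it is circular. To complete your plan you would need either to carry out the hemisphere-by-hemisphere integration with the relative sign made explicit, or to compute $\deg\phi$ honestly (for instance by counting signed preimages of a regular value), neither of which is sketched in enough detail to check.
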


\begin{proof}
Recalled from the previous section that if we make $\phi=\left(z,w\right)$,
the second Chern class of $\phi$ is then given by
\[
c_{2}=\frac{1}{24\pi^{2}}\int_{S^{3}}A.
\]
Where $A$ is a three form given by 
\[
2\left(\bar{z}dzdwd\bar{w}+\bar{w}dzd\bar{z}dw-2\left(zd\bar{z}dwd\bar{w}+wdzd\bar{z}d\bar{w}\right)\right).
\]
We can split this integral as
\[
\int_{S^{3}}A=\int_{D_{-}}A+\int_{D_{+}}A.
\]
Now, call $\rho_{1}^{-1}\rho_{2}^{-1}=\left(z_{1},w_{1}\right)$ and
$\rho_{2}^{-1}\rho_{1}^{-1}=\left(z_{2},w_{2}\right)$. Because of
orientations, we get
\[
\int_{S^{3}}A=\int_{D_{3}}A_{2}-\int_{D_{3}}A_{1}=\int_{D_{3}}\left(A_{2}-A_{1}\right)
\]
where
\[
A_{1}=2\left(\bar{z}_{1}dz_{1}dw_{1}d\bar{w}_{1}+\bar{w}_{1}dz_{1}d\bar{z}_{1}dw_{1}-2\left(z_{1}d\bar{z}_{1}dw_{1}d\bar{w}_{1}+w_{1}dz_{1}d\bar{z}_{1}d\bar{w}_{1}\right)\right)
\]
\[
A_{2}=2\left(\bar{z}_{2}dz_{2}dw_{2}d\bar{w}_{2}+\bar{w}_{2}dz_{2}d\bar{z}_{2}dw_{2}-2\left(z_{2}d\bar{z}_{2}dw_{2}d\bar{w}_{2}+w_{2}dz_{2}d\bar{z}_{2}d\bar{w}_{2}\right)\right).
\]

From this we have for $0\leq\beta\leq\pi/2$ that 
\begin{align*}
\left(z_{2},w_{2}\right)=\rho_{2}^{-1}\rho_{1}^{-1}= & \left(-\sin\left(\frac{\pi}{2}r\right)\cos\left(\pi r\right)e^{-\left(\alpha+2\beta\right)i}-\sin\left(\pi r\right)\cos\left(\frac{\pi}{2}r\right),\right.\\
 & \left.\cos\left(\pi r\right)\cos\left(\frac{\pi}{2}r\right)e^{2i\beta}-\sin\left(\pi r\right)\sin\left(\frac{\pi}{2}r\right)e^{-i\alpha}\right)
\end{align*}
\begin{align*}
\left(z_{1},w_{1}\right)=\rho_{1}^{-1}\rho_{2}^{-1}= & \left(-\sin\left(\frac{\pi}{2}r\right)\cos\left(\pi r\right)e^{-\left(\alpha+2\beta\right)i}-\sin\left(\pi r\right)\cos\left(\frac{\pi}{2}r\right),\right.\\
 & \left.\cos\left(\pi r\right)\cos\left(\frac{\pi}{2}r\right)e^{-2i\beta}-\sin\left(\pi r\right)\sin\left(\frac{\pi}{2}r\right)e^{i\alpha}\right),
\end{align*}
while for $\pi/2\leq\beta\leq\pi$ we have
\begin{align*}
\left(z_{2},w_{2}\right)=\rho_{2}^{-1}\rho_{1}^{-1}= & \left(\sin\left(r\beta\right)\cos\left(\pi r\right)e^{-\alpha i}-\sin\left(\pi r\right)\cos\left(r\beta\right),\right.\\
 & \left.-\sin\left(\pi r\right)\sin\left(r\beta\right)e^{-i\alpha}-\cos\left(\pi r\right)\cos\left(r\beta\right)\right)
\end{align*}
\begin{align*}
\left(z_{1},w_{1}\right)=\rho_{1}^{-1}\rho_{2}^{-1}= & \left(\sin\left(r\beta\right)\cos\left(\pi r\right)e^{-\alpha i}-\sin\left(\pi r\right)\cos\left(r\beta\right),\right.\\
 & \left.-\sin\left(\pi r\right)\sin\left(r\beta\right)e^{i\alpha}-\cos\left(\pi r\right)\cos\left(r\beta\right)\right).
\end{align*}
Observe that in both cases we have that $z_{1}=z_{2}$ y $w_{1}=\bar{w}_{2}$.
Then we have to integrate the form 
\[
A_{2}-A_{1}=4\underbrace{\left(2z_{1}d\bar{z}_{1}-\bar{z}_{1}dz_{1}\right)dw_{1}d\bar{w}_{1}}_{B_{1}}+6\underbrace{\left(w_{1}d\bar{w}_{1}-\bar{w}_{1}dw_{1}\right)dz_{1}d\bar{z}_{1}}_{B_{2}}.
\]
Now consider the decomposition $z_{1}=z=x+yi$ and $w_{1}=w=u+vi$,
where $x,y,u$ and $v$ are real functions. Then it follows that 
\[
\left(2z_{1}d\bar{z}_{1}-\bar{z}_{1}dz_{1}\right)=\left(xdx+ydy\right)+3\left(ydx-xdy\right)i,
\]
\[
dw_{1}d\bar{w}_{1}=-2idu\wedge dv,
\]
\[
\left(w_{1}d\bar{w}_{1}-\bar{w}_{1}dw_{1}\right)=2\left(vdu-udv\right)i
\]
and $dzd\bar{z}=-2idx\wedge dy.$ This gives us
\[
B_{1}=6\left(ydx-xdy\right)du\wedge dv-2i\left(xdx+ydy\right)du\wedge dv
\]
and 
\[
B_{2}=4\left(vdu-udv\right)dx\wedge dy.
\]

Since we only need to compute the real part of the first form, we
consider the form $6\left(ydx-xdy\right)du\wedge dv$ instead of all
of $B_{1}$. Then by definition we get 
\[
ydx-xdy=\left(y\frac{\partial x}{\partial\alpha}-x\frac{\partial y}{\partial\alpha}\right)d\alpha+\left(y\frac{\partial x}{\partial\beta}-x\frac{\partial x}{\partial\beta}\right)d\beta+\left(y\frac{\partial x}{\partial r}-x\frac{\partial y}{\partial r}\right)dr,
\]
\[
vdu-udv=\left(v\frac{\partial u}{\partial\alpha}-u\frac{\partial v}{\partial\alpha}\right)d\alpha+\left(v\frac{\partial u}{\partial\beta}-u\frac{\partial x}{\partial\beta}\right)d\beta+\left(v\frac{\partial u}{\partial r}-u\frac{\partial v}{\partial r}\right)dr,
\]
\[
du\wedge dv=\left(\frac{\partial u}{\partial\alpha}\frac{\partial v}{\partial\beta}-\frac{\partial v}{\partial\alpha}\frac{\partial u}{\partial\beta}\right)d\alpha d\beta+\left(\frac{\partial u}{\partial\alpha}\frac{\partial v}{\partial r}-\frac{\partial v}{\partial\alpha}\frac{\partial u}{\partial r}\right)d\alpha dr+\left(\frac{\partial u}{\partial\beta}\frac{\partial v}{\partial r}-\frac{\partial v}{\partial\beta}\frac{\partial u}{\partial r}\right)d\beta dr
\]
and
\[
dx\wedge dy=\left(\frac{\partial x}{\partial\alpha}\frac{\partial y}{\partial\beta}-\frac{\partial y}{\partial\alpha}\frac{\partial x}{\partial\beta}\right)d\alpha d\beta+\left(\frac{\partial x}{\partial\alpha}\frac{\partial y}{\partial r}-\frac{\partial y}{\partial\alpha}\frac{\partial x}{\partial r}\right)d\alpha dr+\left(\frac{\partial x}{\partial\beta}\frac{\partial y}{\partial r}-\frac{\partial x}{\partial\beta}\frac{\partial y}{\partial r}\right)d\beta dr.
\]
Now call $J_{1}:=\left(ydx-xdy\right)du\wedge dv$ and $J_{2}:=\left(ydx-xdy\right)du\wedge dv$.
Then we have 

\begin{align*}
J_{1}= & \left[\left(y\frac{\partial x}{\partial\alpha}-x\frac{\partial y}{\partial\alpha}\right)\left(\frac{\partial u}{\partial\beta}\frac{\partial v}{\partial r}-\frac{\partial v}{\partial\beta}\frac{\partial u}{\partial r}\right)-\left(y\frac{\partial x}{\partial\beta}-x\frac{\partial x}{\partial\beta}\right)\left(\frac{\partial u}{\partial\alpha}\frac{\partial v}{\partial r}-\frac{\partial v}{\partial\alpha}\frac{\partial u}{\partial r}\right)\right.\\
 & \left.+\left(y\frac{\partial x}{\partial r}-x\frac{\partial y}{\partial r}\right)\left(\frac{\partial u}{\partial\alpha}\frac{\partial v}{\partial\beta}-\frac{\partial v}{\partial\alpha}\frac{\partial u}{\partial\beta}\right)\right]d\alpha\wedge d\beta\wedge dr,
\end{align*}
\begin{align*}
J_{2}= & \left[\left(v\frac{\partial u}{\partial\alpha}-u\frac{\partial v}{\partial\alpha}\right)\left(\frac{\partial x}{\partial\beta}\frac{\partial y}{\partial r}-\frac{\partial x}{\partial\beta}\frac{\partial y}{\partial r}\right)-\left(v\frac{\partial u}{\partial\beta}-u\frac{\partial x}{\partial\beta}\right)\left(\frac{\partial u}{\partial\alpha}\frac{\partial v}{\partial r}-\frac{\partial v}{\partial\alpha}\frac{\partial u}{\partial r}\right)\right.\\
 & \left.+\left(v\frac{\partial u}{\partial r}-u\frac{\partial v}{\partial r}\right)\left(\frac{\partial x}{\partial\alpha}\frac{\partial y}{\partial\beta}-\frac{\partial y}{\partial\alpha}\frac{\partial x}{\partial\beta}\right)\right]d\alpha\wedge d\beta\wedge dr.
\end{align*}
Then by replacing we get $A_{2}-A_{1}=24\left(J_{1}+J_{2}\right),$
and even further 
\[
c_{2}\left(\phi\right)=\frac{1}{\pi^{2}}\int\left(J_{1}+J_{2}\right).
\]
Where by using computational software we obtain that $\int\left(J_{1}+J_{2}\right)=-\pi^{2}$,
giving us $c_{2}\left(\phi\right)=$-1.

We conclude that $c_{2}^{-1}\left(\xi\right)=-1$ for our TC structure,
implying that the TC structure is non trivial.
\end{proof}
\medskip{}

\end{document}